\documentclass[reqno,11pt]{amsart}
\usepackage[margin=1in]{geometry}
\usepackage{amsmath,amssymb,amsthm,amsfonts,mathrsfs}
\usepackage{color}
\usepackage{cite}
\usepackage{enumitem}
\usepackage[colorlinks=true,linkcolor=blue,citecolor=blue,urlcolor=blue]{hyperref}
\usepackage{tikz}
\usetikzlibrary{arrows,calc}
\newtheorem {theorem}{Theorem}[section]
\newtheorem {lemma}[theorem]{{\bf Lemma}}

\theoremstyle{remark}
\newtheorem {remark}{{\bf Remark}}[section]

\theoremstyle{plain} \numberwithin {equation}{section}
\def\nn{\nonumber}

\newcommand{\R}{\mathbb{R}}
\newcommand{\T}{\mathbb{T}}
\newcommand{\dd}{\,\mathrm{d}}

\begin{document}
\title[Stability of NS system]{Enhanced Dissipation and Stability Threshold for the Navier--Stokes Equations near Poiseuille Flow}

\author[T. Liang]{Tao Liang}
\address[Tao Liang]{\newline   School of Mathematics,
	South China University of Technology,
	Guangzhou, 510640, China}
\email{taolmath@163.com}

\author[C. L. Zhai]{Cuili Zhai}
\address[Cuili Zhai]{\newline   School of Mathematics and Physics
University of Science and Technology Beijing
100083, Beijing, People's Republic of China}
\email{zhaicuili035@126.com}

\author[X. P. Zhai]{Xiaoping Zhai}
\address[Xiaoping Zhai]{\newline   School of Mathematics and Statistics, Guangdong University of Technology,
	Guangzhou, 510520, China}
\email{pingxiaozhai@163.com (Corresponding author)}

\date{\today}

\maketitle

\begin{abstract}
We study the nonlinear stability of the quadratic plane Poiseuille flow
$U(y)=(y^2,0)$ for the two-dimensional incompressible Navier--Stokes equations on
$\mathbb T\times\mathbb R$. We prove that for initial perturbations of size $O(\nu^{2/3})$ in a $\nu$-dependent weighted Sobolev space, the flow is globally nonlinearly stable. In particular, the nonzero streamwise modes experience enhanced dissipation and decay exponentially on the characteristic time scale $O(\nu^{-1/2})$, while the zero mode remains uniformly controlled. Thus, in this weighted Sobolev topology, \(O(\nu^{2/3})\) is a sufficient stability scale without logarithmic loss.

\vspace{2mm}
\noindent\textsc{Keywords.} Poiseuille flow; Stability threshold; Navier-Stokes equations; Enhanced dissipation.

\vspace{2mm}
\noindent\textsc{AMS subject classifications.} 76N10, 35Q30, 35R35
\end{abstract}

\tableofcontents

\section{Introduction and main result}\label{sec:introduction}
\subsection{Background and previous results}
We consider the 2D incompressible Navier-Stokes (NS) equations on  $(x,y) \in \T \times \R$:
\begin{eqnarray}\label{model1}
\left\{\begin{aligned}
&\partial_t v - \nu \Delta v + (v \cdot \nabla)v + \nabla q = 0, \\
&\nabla \cdot v = 0,\\
&v|_{t=0} = v_{\mathrm{in}}(x, y),
\end{aligned}\right.
\end{eqnarray}
where $\nu = {Re}^{-1} > 0$ denotes the viscosity coefficient (inversely proportional to the Reynolds number), $v(t, x, y) \in \mathbb{R}^2$ is the velocity field, and $q \in \mathbb{R}$ is the pressure.

The profile $v_s = (y^2, 0)$ is a steady solution of \eqref{model1} provided that the pressure satisfies $\nabla Q \equiv (2\nu, 0)$. To investigate its stability, we introduce the velocity perturbation $u = v - v_s = (u_1, u_2)$ and the pressure perturbation $p = q - Q$. The governing equations for the perturbation are:
\begin{eqnarray}\label{model2}
\left\{\begin{aligned}
&\partial_t u - \nu \Delta u + (u \cdot \nabla)u+ y^2\partial_x u + \begin{pmatrix} 2yu_2 \\ 0 \end{pmatrix} + \nabla p = 0, \\
&\nabla \cdot u = 0,\\
&u|_{t=0} = u_{\mathrm{in}}(x, y).
\end{aligned}\right.
\end{eqnarray}
Introducing a stream function $\psi$ through $u=\nabla^\perp\psi=(-\partial_y\psi,\partial_x\psi)$, and defining  $\omega=\Delta\psi$, we obtain
\begin{eqnarray}\label{wholeNS}
\left\{\begin{aligned}
&\partial_t\omega+y^2\partial_x\omega-2\partial_x\psi-\nu\Delta\omega=-u\cdot\nabla\omega,\\
&\omega=\Delta\psi,
\qquad
u=\nabla^\perp\psi,\qquad\qquad\qquad\quad\qquad
(x,y)\in\mathbb T\times\mathbb R.
\end{aligned}\right.
\end{eqnarray}
In this formulation, the term $y^2\partial_x\omega$ dictates phase mixing, while $-2\partial_x\psi$ represents a nonlocal coupling induced by the curvature of the background shear. This curvature term is the main feature distinguishing the linearized equation from a passive-scalar equation.

\medskip
Hydrodynamic stability at high Reynolds numbers has been a central theme in fluid dynamics since Reynolds's seminal 1883 experiment \cite{Re}. While certain laminar flows are linearly stable at all Reynolds numbers \cite{DR,Rom}, they often exhibit nonlinear instability under finite-amplitude perturbations as $\nu \to 0$ \cite{DHB,TA}. As first observed by Kelvin \cite{Kel}, the basin of attraction for these laminar flows shrinks as the Reynolds number increases. This observation motivates the so-called transition threshold problem, classically conceptualized by Trefethen et al.~\cite{T} and rigorously formulated by Bedrossian, Germain, and Masmoudi \cite{BGM2017}: given a function space $X$, determine the critical exponent $\gamma=\gamma(X)$ such that
\begin{align*}
		&\|u_{\mathrm{in}}\|_{X}\ll \nu^{\gamma} \Longrightarrow\ \mathrm{stability},\\
		&\|u_{\mathrm{\mathrm{in}}}\|_{X}\gg \nu^{\gamma}\Longrightarrow\ \mathrm{instability}.
\end{align*}
For Couette flow on \(\mathbb T\times\mathbb R\), the following threshold bounds are known:
\begin{itemize}[leftmargin=*]
	\item In Gevrey class $2^-$, $\gamma\leq 0$ \cite{BMV2016}, while $\gamma\geq 0$ was established in \cite{DM2023}.
	\item In the Sobolev space $H^{\log}_xL^2_y$, $\gamma\leq \tfrac12$ \cite{BVW2018,zhaoweiren2020cpde}, with the lower bound $\gamma\geq \tfrac12$ proved in \cite{LiMasmoudiZhao2022critical}.
	\item In $H^\sigma$ ($\sigma\geq 2$), $\gamma\leq \tfrac13$ \cite{zhaoweiren2019,wei2023}.
	\item In Gevrey class $\tfrac1s$ ($s\in[0,\tfrac12]$), $\gamma\leq \tfrac{1-2s}{3(1-s)}$ \cite{LMZ2022G}.
\end{itemize}
Analogous threshold results have been established for 2D Couette flow in finite channels \cite{BHIW2023,BHIW,CLWZ2020} and for 3D Couette flow \cite{BGM2017,BGM2020,BGM2015,Chenwei2020,wei2020}.

For the Poiseuille profile \(U(y)=y^2\), \(U'(0)=0\) and \(U''(0)=2\). Hence phase mixing degenerates near the critical point \(y=0\).  Balancing quadratic phase mixing with viscous diffusion yields mode-dependent length and time scales:
\begin{align*}
t_{\mathrm{ED},k}\sim(\nu|k|)^{-1/2},
\qquad
\lambda_{\mathrm{ED},k}\sim(\nu|k|)^{1/2},
\qquad k\neq0.
\end{align*}
Consequently, the lowest nonzero streamwise modes decay on the enhanced-dissipation time scale $O(\nu^{-1/2})$, which is slower than the $O(\nu^{-1/3})$ Couette scale but significantly faster than standard viscous diffusion.

Previous studies of Poiseuille-type flows have used resolvent estimates, hypocoercivity, and inviscid damping. For the 2D quadratic shear on $\mathbb T\times\mathbb R$, Coti Zelati, Elgindi, and Widmayer \cite{CotiZelati2020a} proved enhanced dissipation and nonlinear stability for perturbations of size $\nu^{3/4+}$ in a weighted vorticity space. Del Zotto \cite{DelZotto2023} subsequently refined the hypocoercive
method and obtained nonlinear stability under an $L^2$ smallness
condition of size
$\nu^{2/3}(1+|\log\nu|^{1/2})^{-1}$, together with enhanced
dissipation of the nonzero streamwise modes on the time scale
$O(\nu^{-1/2})$. In the bounded domain setting with Navier-slip boundaries, Ding and Lin established a $\nu^{3/4}$ threshold via resolvent estimates \cite{Ding2022}, later improving it to $\nu^{2/3}$ \cite{Ding2025}.

Under no-slip boundary conditions, Chen, Li, Shen, and Zhang \cite{ChenLiShenZhang2025} recently analyzed a class of symmetric shear flows in $\mathbb T\times(-1,1)$ (including the quadratic profile). By simultaneously controlling the inviscid-damping norm $|\alpha|^{1/2}\|u_\alpha\|_{L_{t,y}^2}$ and the enhanced-dissipation norm $\nu^{1/4}|\alpha|^{1/4}\|\omega_\alpha\|_{L_{t,y}^2}$, they obtained a sufficient $O(\nu^{2/3})$ stability scale. In 3D, Chen, Ding, Lin, and Zhang \cite{ChenDingLinZhang} proved global stability for perturbations of size $\nu^{7/4}$ in a finite channel.

\subsection{Main result}
For a function $f=f(x,y)$, we denote its streamwise average and the projection onto nonzero streamwise modes by
\begin{equation*}
P_0f=f_0
:=
\frac{1}{2\pi}
\int_{-\pi}^{\pi}f(x,y)\,\mathrm dx,
\qquad
f_{\neq}:=f-f_0.
\end{equation*}
We define the Fourier multiplier $\langle\partial_x\rangle = (1-\partial_x^2)^{1/2}$. Unless otherwise specified, an $L^2$ norms of nonzero modes are taken over $\mathbb T\times\mathbb R$, while those of streamwise averages are defined over $\mathbb R$.

Our main result is stated as follows.
\begin{theorem}\label{thm1}
Fix $m>9/8$. There exist constants $\varepsilon_0=\varepsilon_0(m)>0$, $\delta_0=\delta_0(m)>0$, and $C=C(m)>0$, all independent of $\nu\in(0,1)$, such that the following holds.

Let $u_{\mathrm{in}}=\nabla^\perp\psi_{\mathrm{in}}$ be an initial velocity perturbation with corresponding vorticity $\omega_{\mathrm{in}}=\Delta\psi_{\mathrm{in}}$. Suppose that $0<\varepsilon\leq\varepsilon_0$ and the initial data satisfy
\begin{align}\label{initial_data}
\notag\|u_{\mathrm{in},0}\|_{L_y^2}^2
+&\nu^{1/3}
 \|\partial_y\omega_{\mathrm{in},0}\|_{L_y^2}^2  + \| \langle \partial_x\rangle^{m}\omega_{\mathrm{in,\neq}}\|_{L_{x,y}^2}^2
+\nu^{1/2}
 \big\|
 \nabla|\partial_x|^{m-\frac14}
 \omega_{\mathrm{in},\neq}
 \big\|_{L_{x,y}^2}^2
\\
&+\nu^{-1/2}
 \bigl\|
 |\partial_x|^{m+1/4}
 (y\omega_{\mathrm{in},\neq})
 \bigr\|_{L_{x,y}^2}^2
+\nu^{-\frac12}
 \bigl\|
 |\partial_x|^{m+1/4}
 u_{\mathrm{in},\neq}
 \bigr\|_{L_{x,y}^2}^2
\leq
\varepsilon^2\nu^{4/3}.
\end{align}
Then the associated unique solution to \eqref{wholeNS} exists globally in time. Moreover, for all $t\geq0$, the nonzero modes satisfy
\begin{equation}\label{nonzero}
\big\|
\langle\partial_x\rangle^{m}
\omega_{\neq}(t)
\big\|_{L_{x,y}^2}+\left\|
\langle\partial_x\rangle^{m-{1}/{4}}
\nu^{1/4}\partial_y
\omega_{\neq}(t)
\right\|_{L_{x,y}^2}
\leq
C\varepsilon\nu^{2/3}
e^{-\delta_0\nu^{1/2}t},
\end{equation}
and the zero mode satisfies
\begin{equation}\label{zero}
\|\omega_0(t)\|_{L_y^2}
+\nu^{1/6}
 \|\partial_y\omega_0(t)\|_{L_y^2}
\leq
C\varepsilon\nu^{2/3}.
\end{equation}
In particular, the nonzero streamwise modes experience enhanced dissipation, decaying exponentially on the characteristic time scale $t\sim\nu^{-1/2}$.
\end{theorem}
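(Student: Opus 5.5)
The proof is a bootstrap (continuity) argument built on a hypocoercive energy functional for each nonzero streamwise frequency $k$, in the spirit of Del Zotto and Coti Zelati--Elgindi--Widmayer, weighted by $|k|$ to produce the $\langle\partial_x\rangle^m$ regularity. The zero mode is handled by a separate energy estimate.

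\textbf{Linear hypocoercivity.} Take the Fourier transform in $x$, writing $\omega_k(t,y)$ for $k\neq0$. For each $k$, define
\begin{align*}
\Phi_k=\|\omega_k\|^2+\alpha\,\nu^{1/2}|k|^{-1/2}\|\partial_y\omega_k\|^2+\beta\,\mathrm{Re}\langle ik\,y\,\omega_k,\partial_y\omega_k\rangle+\gamma\,\nu^{-1/2}|k|^{1/2}\|k\,y\,\omega_k\|^2 ,
\end{align*}
with the coefficients ordered so that $\beta^2\ll\alpha\gamma$; this makes $\Phi_k$ comparable to the sum of its diagonal terms. The last term is why the weighted quantity $|\partial_x|^{m+1/4}(y\omega)$ appears in the hypothesis \eqref{initial_data}.

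Differentiating along the linearized flow $\partial_t\omega_k+iky^2\omega_k-2ik\psi_k-\nu(\partial_y^2-k^2)\omega_k=0$:
\begin{itemize}
\item The cross term produces the good quantity $\beta|k|^2\|y\omega_k\|^2$, because $[\partial_y,iky^2]=2iky$.
\item Combined with diffusion, this yields the spectral gap $\nu\|\partial_y\omega_k\|^2+|k|^2\|y\omega_k\|^2\gtrsim(\nu|k|)^{1/2}\|\omega_k\|^2$.
\end{itemize}
The conclusion is $\frac{d}{dt}\Phi_k+c(\nu|k|)^{1/2}\Phi_k+c\,\mathcal D_k\le0$, where $\mathcal D_k$ collects the dissipative terms.

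The nonlocal term $-2ik\psi_k$ has to be controlled. In the $L^2$ energy, $\mathrm{Re}\langle 2ik\psi_k,\omega_k\rangle=0$. In the other pieces it is bounded by $|k|\|\psi_k\|\le |k|^{-1}\|\omega_k\|$ and by the velocity: $\|\partial_y\psi_k\|,|k|\|\psi_k\|\lesssim\|u_k\|$. This is why $\nu^{-1/2}\||\partial_x|^{m+1/4}u\|$ enters the norm. To close that part, add to $\Phi_k$ a term of size $\nu^{-1/2}|k|^{1/2}\|u_k\|^2$ and use the inviscid-damping structure $\langle y\omega_k,\ldots\rangle$. Summing $|k|^{2m}\Phi_k$ gives the nonzero-mode energy $E_{\neq}(t)$, and multiplying by $e^{2\delta_0\nu^{1/2}t}$ gives exponential decay for $|k|\ge1$.

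\textbf{Nonlinear bootstrap.} Assume on $[0,T]$ that
\[
e^{2\delta_0\nu^{1/2}t}E_{\neq}\le 4C\varepsilon^2\nu^{4/3},\qquad
\|u_0\|^2+\|\omega_0\|^2+\nu^{1/3}\|\partial_y\omega_0\|^2\le4C\varepsilon^2\nu^{4/3},
\]
and improve both constants to $2C$. The nonlinear term decomposes into three interactions:
\begin{itemize}
\item $u_0\partial_x\omega_{\neq}$: in the $L^2$ energy it vanishes by antisymmetry. In the $\partial_y$ and $y$-weighted terms, commutators produce $\partial_yu_0\,\partial_x\omega_{\neq}=\omega_0\partial_x\omega_{\neq}$, which costs $\|\omega_0\|_{L^\infty}\lesssim\|\omega_0\|^{1/2}\|\partial_y\omega_0\|^{1/2}\lesssim\varepsilon\nu^{2/3-1/12}$.
\item $u_{\neq}\cdot\nabla\omega_0$: this uses $\|\partial_y\omega_0\|\lesssim\varepsilon\nu^{1/2}$.
\item $u_{\neq}\cdot\nabla\omega_{\neq}$: estimated using $m>9/8$ to sum over $k$ in $\ell^1$ with the $|k|^{1/4}$ losses.
\end{itemize}
Each bound has the form $\varepsilon\nu^{2/3}\nu^{-a}\times(\text{dissipation})$, and all of them must be absorbed by $\nu^{1/2}E_{\neq}+\mathcal D$.

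For the zero mode, $\partial_t\omega_0-\nu\partial_y^2\omega_0=-\partial_y(u_{\neq,2}\omega_{\neq})_0$. Its forcing decays like $e^{-2\delta_0\nu^{1/2}t}$ with amplitude $(\varepsilon\nu^{2/3})^2$ times inverse powers of $\nu$. Integrating in time gives a factor $\nu^{-1/2}$, so the contribution is $\varepsilon^2\nu^{4/3}\cdot\varepsilon\,\nu^{2/3-1/2-\cdots}$, which must be $\le\varepsilon^2\nu^{4/3}$. Setting up the time integration correctly requires $L^2_t$ dissipation bounds rather than pointwise ones.

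\textbf{Main obstacle.} The exponent bookkeeping in the nonlinear terms is the crux: the threshold $\nu^{2/3}$ must come out exactly, with no logarithm. The dangerous terms are those where the $y$-weight or $\partial_y$ in the hypocoercive cross term lands on the nonlinearity. Examples are $\beta\langle ky(u\cdot\nabla\omega)_k,\partial_y\omega_k\rangle$, and the $y^2$-growth of the background interacting with $u_{\neq}$, which can produce unbounded weights. The first approach to try is integrating by parts to move $y$ onto $\omega$ and then controlling it with $\gamma\nu^{-1/2}|k|^{1/2}\|ky\omega_k\|^2$. The $\nu^{-1/2}$ gain in that weight is exactly what lets the size $\nu^{2/3}$ close. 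Where $\psi_k$ appears with weight $y$, use $\|y\,u_k\|$ controlled through $\|\omega_k\|$ and $\|y\omega_k\|$ via the elliptic relation $\Delta_k\psi_k=\omega_k$ on $\mathbb R$.
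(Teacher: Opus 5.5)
Your architecture is the paper's: the Del Zotto modewise functional summed with weights $|k|^{2m}e^{2\delta_0\nu^{1/2}t}$, the zero-mode energy $\|\omega_0\|^2+\nu^{1/3}\|\partial_y\omega_0\|^2$ with $\|\omega_0\|_{L^\infty}\lesssim\nu^{-1/12}\mathcal{E}_0^{1/2}$, the mean/fluctuation trichotomy, weighted elliptic control of $y\psi_k$, $m>9/8$ for the $\ell^1$ sums, and a continuity argument using time-integrated dissipation. Your observation that $\|\partial_y\omega_0\|\lesssim\varepsilon\nu^{1/2}$ exactly offsets the $\nu^{-1/2}$ weight is why the fluctuation--mean term in the $y\omega$-part, $\mathcal{N}^{(4)}_{\neq,0}\lesssim\nu^{-2/3}\mathcal{E}_0^{1/2}\mathcal{D}_{\neq,3}$, is critical but still closes. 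You leave the exponent bookkeeping undone. In the paper, the other source of $\nu^{-2/3}$ is the interpolation $\sum_k|k|^{2m+1}\|\omega_k\|^2\lesssim\nu^{-2/3}\mathcal{D}_{\neq,1}^{1/3}\mathcal{D}_{\neq,4}^{2/3}$ from Lemma~\ref{w_neq}, used on the region $\Omega_{\mathrm{HL}}$.

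\textbf{The normalization of $\Phi_k$ as written has to be fixed.} You use a $k$-independent cross coefficient together with $\gamma\nu^{-1/2}|k|^{1/2}\|ky\omega_k\|^2=\gamma\nu^{-1/2}|k|^{5/2}\|y\omega_k\|^2$. That functional is coercive, but it fails in three ways.
\begin{enumerate}
\item The cross term only produces $|k|^2\|y\omega_k\|^2$, so the modewise rate degenerates to $\nu^{1/2}|k|^{-1/2}$. For large $|k|$ this falls below $2\delta_0\nu^{1/2}$, and the exponential weight in $\mathcal{E}_{\neq}$ is no longer compensated.
\item The resulting norm would require $|\partial_x|^{m+5/4}(y\omega_{\mathrm{in}})$, which is stronger than \eqref{initial_data}.
\item The $\psi$-coupling in $\frac{d}{dt}\|ky\omega_k\|^2$ carries the factor $\nu^{-1/2}$, so it cannot be bounded and absorbed; it must cancel exactly through \eqref{ED}. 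That forces the velocity term to be $2\|k\nabla_k\psi_k\|^2$ with the same coefficient as $\|ky\omega_k\|^2$, whereas your added $\nu^{-1/2}|k|^{1/2}\|u_k\|^2$ carries a different weight.
\end{enumerate}
The consistent choice, matching \eqref{initial_data}, is the paper's: $\beta|k|^{-1}$ on the cross term and $\gamma\nu^{-1/2}|k|^{-3/2}\bigl(\|ky\omega_k\|^2+2\|k\nabla_k\psi_k\|^2\bigr)$.

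\textbf{Where you genuinely differ from the paper, your route is better.} On mode $k$, transport by $u_0(y)\partial_x$ is multiplication by $iku_0$ with $u_0$ real. It is therefore skew and commutes with $y$, so it drops out of $\|\omega_k\|^2$ and $\|y\omega_k\|^2$. In $\|\nabla_k\omega_k\|^2$, in $\|\nabla_k\psi_k\|^2$ and in the mixed term, only $\partial_yu_0=-\omega_0$ survives; for the mixed term the two halves combine to a multiple of $\int k^2y\,\partial_yu_0\,|\omega_k|^2$.

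The paper instead estimates $\mathcal{N}^{(3,1)}$ and $\mathcal{N}^{(3,2)}$ separately. Integrating the $y$-weight by parts in each leaves a term proportional to $\int k^2u_0^1|\omega_k|^2$ (this is $\mathcal{T}_5$, and $\mathcal{R}_{0,\neq}$ from the other half). The paper controls these through:
\begin{itemize}
\item the $u_0^1$ equation \eqref{eq_u0},
\item the nonlocal-in-time quantity $\mathcal{I}(t)$,
\item the hypothesis on $\|u_{\mathrm{in},0}\|_{L^2}$,
\item the extra $\nu^{-47/48}\mathcal{E}_{\mathrm{tot}}^{7/4}$ term in \eqref{bootstrap_ineq}.
\end{itemize}
These two contributions cancel when the halves are combined. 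With your commutator structure, $u_0$ never enters the nonzero-mode estimates, so the $\|u_0\|$ component of your bootstrap is harmless but unnecessary.
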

\begin{remark}
Del Zotto \cite{DelZotto2023} established nonlinear stability under an $L^2$ condition $\|\omega_{\mathrm{in}}\|_{L^2}\lesssim
\nu^{2/3}(1+|\log\nu|^{1/2})^{-1}$. In contrast, \eqref{initial_data}
imposes stronger, $\nu$-dependent weighted Sobolev assumptions, including
control of streamwise derivatives, $y\omega_{\mathrm{in},\neq}$, and the
nonzero-mode velocity. Thus, Theorem~\ref{thm1} is not a logarithm-free
improvement for arbitrary $L^2$ data. Rather, it provides a sufficient
stability criterion at scale $\nu^{2/3}$ without logarithmic loss in this
stronger topology, together with weighted enhanced-dissipation estimates
and simultaneous control of the zero and nonzero streamwise modes.
\end{remark}
\begin{remark}\label{rem:restriction_m}
The condition $m>\frac98$ arises from the summability requirement
$\sum_{\ell\neq0}|\ell|^{5/4-2m}<\infty$ used in \eqref{psi_neq3}.
It is sufficient for the present weighted energy argument but is not
claimed to be optimal; the endpoint $m=\frac98$ is not addressed here.
\end{remark}

\subsection{Organization of the paper}\label{subsec:organization}

Section~\ref{sec:linearized-stability} develops the modewise hypocoercive energy method and proves the corresponding coercivity and enhanced-dissipation estimates.
Section~\ref{sec:preliminary-lemmas} collects the Sobolev, interpolation, and weighted elliptic estimates needed below.
In Section~\ref{sec:nonlinear-stability}, we derive the zero- and nonzero-mode equations, state the nonlinear estimates, and close the global bootstrap argument.
The detailed proofs of these estimates, based on mean--fluctuation decompositions and a suitable frequency partition, are given in Section~\ref{sec:proof-nonlinear-lemmas}.

\section{Linearized Stability}\label{sec:linearized-stability}
We begin by establishing the modewise hypocoercive estimates that govern the enhanced dissipation. Let $f_k(t,y)$ denote the partial Fourier coefficients in the streamwise variable $x$, defined by
\begin{equation*}
f_k(t,y) = \frac{1}{2\pi}\int_{-\pi}^{\pi} f(t,x,y)e^{-ikx} \dd x, \qquad k\in\mathbb{Z}.
\end{equation*}
We introduce the mode-dependent differential operators
\begin{equation*}
\nabla_k = (ik,\partial_y), \qquad \Delta_k = \partial_y^2 - k^2.
\end{equation*}
For each $k\neq0$, the linearization of \eqref{wholeNS} around the Poiseuille flow reduces to the following system on $\mathbb{R}$:
\begin{equation}\label{linear_mode}
\partial_t\omega_k + iky^2\omega_k - 2ik\psi_k - \nu\Delta_k\omega_k = 0, \qquad \omega_k = \Delta_k\psi_k.
\end{equation}

The following identities are used to construct the modewise hypocoercive energy. Here and in the sequel, $\langle \cdot, \cdot \rangle$ denotes the standard $L^2$ inner product over $y \in \mathbb{R}$.

\begin{lemma}\label{Zotto}
For any $k\neq0$, sufficiently smooth solutions to \eqref{linear_mode} satisfy the following four identities.
\begin{align}
	&\frac{1}{2}\frac{\mathrm{d}}{\mathrm{d}t}\|\omega_k\|_{L^2}^2 + \nu\|\nabla_k \omega_k\|_{L^2}^2 = 0,
\label{L2}\\
	&\frac{1}{2}\frac{\mathrm{d}}{\mathrm{d}t}\|\nabla_k \omega_k\|_{L^2}^2 + \nu\|\Delta_k \omega_k\|_{L^2}^2
	= -2\operatorname{Re} \langle iky \omega_k, \partial_y \omega_k \rangle,\label{H1}\\
	&\frac{1}{2}\frac{\mathrm{d}}{\mathrm{d}t}\left(\|iky \omega_k\|_{L^2}^2 + 2\|ik\nabla_k \psi_k\|_{L^2}^2 \right)
	+ \nu\|ik \omega_k\|_{L^2}^2 + \nu\|iky\nabla_k \omega_k\|_{L^2}^2 = 0,\label{ED}
\\
	&\frac{\mathrm{d}}{\mathrm{d}t}\operatorname{Re}\langle \partial_y \omega_k, iky\omega_k \rangle
	+ 2\|iky \omega_k\|_{L^2}^2 + 4\|ik\partial_y \psi_k\|_{L^2}^2
	= -2\nu \operatorname{Re} \langle \Delta_k \omega_k, iky\partial_y \omega_k \rangle.\label{ID}
\end{align}
\end{lemma}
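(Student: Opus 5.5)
The plan is to derive all four identities by pairing \eqref{linear_mode} in $L^2(\mathbb R)$ with a suitable multiplier, taking real parts, and integrating by parts in $y$. The solution is assumed smooth and decaying, with enough $y$-moments that every boundary term vanishes. Two facts are used throughout. First, $\operatorname{Re}\langle i a f,f\rangle=0$ for any real function $a$. Second, $\omega_k=\Delta_k\psi_k$ gives $\langle\psi_k,\omega_k\rangle=-\|\nabla_k\psi_k\|^2$, and $\langle \psi_k,\Delta_k\omega_k\rangle=\|\omega_k\|^2$, both real. Consequently the nonlocal term $-2ik\psi_k$ contributes $\operatorname{Re}(\pm 2ik\cdot\text{real})=0$ whenever it is tested against $\omega_k$ or against $\Delta_k\omega_k$.

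\textbf{Identities \eqref{L2} and \eqref{H1}.} For \eqref{L2} I test with $\omega_k$. The transport term vanishes because $y^2$ is real, the $\psi$-term vanishes by the remark above, and $-\nu\langle\Delta_k\omega_k,\omega_k\rangle=\nu\|\nabla_k\omega_k\|^2$. For \eqref{H1} I test with $-\Delta_k\omega_k$, so the time derivative becomes $\tfrac12\frac{d}{dt}\|\nabla_k\omega_k\|^2$. The viscous term gives $\nu\|\Delta_k\omega_k\|^2$, and the $\psi$-term is $\operatorname{Re}(2ik\|\omega_k\|^2)=0$. In the transport term, integration by parts gives $\operatorname{Re}\langle \partial_y(iky^2\omega_k),\partial_y\omega_k\rangle+k^2\operatorname{Re}\langle iky^2\omega_k,\omega_k\rangle$. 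Only the commutator piece $\operatorname{Re}\langle 2iky\omega_k,\partial_y\omega_k\rangle$ survives, and moving it to the right-hand side yields \eqref{H1}.

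\textbf{Identity \eqref{ED}.} I test with $k^2y^2\omega_k$ and separately compute $\frac{d}{dt}\|\nabla_k\psi_k\|^2=-2\operatorname{Re}\langle\partial_t\omega_k,\psi_k\rangle$, substituting the equation into the latter. The transport term against $y^2\omega_k$ vanishes. The $\psi$-term from the weighted test, $\operatorname{Re}\langle-2ik\psi_k,k^2y^2\omega_k\rangle$, cancels exactly against the transport contribution $-2k^2\operatorname{Re}\langle -iky^2\omega_k,\psi_k\rangle$ coming from the $\psi$-energy. This cancellation is the reason for the coefficient $2$ in front of $\|ik\nabla_k\psi_k\|^2$. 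In the $\psi$-energy, the term $2ik\psi_k$ contributes $\operatorname{Re}(ik\|\psi_k\|^2)=0$. For the viscous terms, $-\nu k^2\operatorname{Re}\langle\Delta_k\omega_k,y^2\omega_k\rangle$ equals $\nu\|ky\nabla_k\omega_k\|^2+\nu k^2\operatorname{Re}\langle\partial_y\omega_k,2y\omega_k\rangle$, and the last term is $-\nu k^2\|\omega_k\|^2$. The $\psi$-energy adds $+2\nu k^2\|\omega_k\|^2$ (sign to be checked carefully). The net result is $\nu\|ik\omega_k\|^2+\nu\|iky\nabla_k\omega_k\|^2$, as in \eqref{ED}.

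\textbf{Identity \eqref{ID}.} I expand $\frac{d}{dt}\operatorname{Re}\langle\partial_y\omega_k,iky\omega_k\rangle=\operatorname{Re}\langle\partial_y\partial_t\omega_k,iky\omega_k\rangle+\operatorname{Re}\langle\partial_y\omega_k,iky\partial_t\omega_k\rangle$. Integrating the first term by parts collapses the sum to $-2\operatorname{Re}\langle\partial_t\omega_k,iky\partial_y\omega_k\rangle-\operatorname{Re}\langle\partial_t\omega_k,ik\omega_k\rangle$, and I then insert the equation.
\begin{itemize}
\item The transport part reduces, via $\partial_y(y^2)=2y$, to $-2\|ky\omega_k\|^2$.
\item The $\psi$-part produces terms like $4k^2\operatorname{Re}\langle\psi_k,y\partial_y\Delta_k\psi_k\rangle$. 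Integrating by parts twice using $\partial_y(y\,\cdot)$, the $k^2$-terms and the $\partial_y^2$-terms each produce multiples of $\|\partial_y\psi_k\|^2$ and $\|k\psi_k\|^2$, and these should combine to $-4k^2\|\partial_y\psi_k\|^2$.
\item The viscous part remains as $-2\nu\operatorname{Re}\langle\Delta_k\omega_k,iky\partial_y\omega_k\rangle$, together with $-\nu\operatorname{Re}\langle\Delta_k\omega_k,ik\omega_k\rangle=0$.
\end{itemize}
I expect the main obstacle to be this $\psi$-part bookkeeping, where signs and the cancellation of the $\|k\psi_k\|^2$ contributions must be tracked precisely. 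I would do it by writing everything in terms of $\psi_k$ and using $\operatorname{Re}\langle f,y\partial_y f\rangle=-\tfrac12\|f\|^2$ repeatedly, once with $f=\psi_k$ and once with $f=\partial_y\psi_k$.
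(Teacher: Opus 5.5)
Your proposal is correct, and the points you left to be checked come out as you predict: in \eqref{ED} the $\psi$-energy contributes $+2\nu k^2\|\omega_k\|_{L^2}^2$ to the dissipative side, and in \eqref{ID} the nonlocal term yields $-\operatorname{Re}\langle 2ik\psi_k,ik\omega_k\rangle=2k^2\|\nabla_k\psi_k\|_{L^2}^2$ plus $-4k^2\operatorname{Re}\langle\psi_k,y\partial_y\omega_k\rangle=-6k^2\|\partial_y\psi_k\|_{L^2}^2-2k^4\|\psi_k\|_{L^2}^2$ (note the minus sign), for a total of $-4k^2\|\partial_y\psi_k\|_{L^2}^2$; the first piece is paired against $ik\omega_k$, so unlike its viscous analogue it does not vanish, and it is exactly what cancels the $k^4\|\psi_k\|_{L^2}^2$ terms. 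The paper omits the proof and cites Del Zotto, but its own computation of the mixed term in the proof of Lemma~\ref{le_ID} is the same direct calculation, and evaluating the cross term left there, $4k^2\operatorname{Re}\int_{\mathbb R}y\,\overline{\omega_k}\,\partial_y\psi_k\,\mathrm{d}y=-2k^2\|\partial_y\psi_k\|_{L^2}^2+2k^4\|\psi_k\|_{L^2}^2$, turns \eqref{jiaj1} into exactly \eqref{ID}.
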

\begin{proof}
This lemma follows directly from Proposition 2.3 in \cite{DelZotto2023}; we omit the proof for brevity.
\end{proof}
We next combine \eqref{ID} with an additional estimate to obtain coercive control of $y\omega_k$, $\partial_y\psi_k$, and $\psi_k$.
\begin{lemma}\label{le_ID}
For every $k\neq0$, the mixed energy satisfies
	\begin{align}\label{molify_ID}
	\frac{\mathrm{d}}{\mathrm{d}t}\operatorname{Re}\langle \partial_y \omega_k,\, iky\omega_k \rangle
	+ \|iky \omega_k\|_{L^2}^2 + 2|k|^2\|\partial_y \psi_k\|_{L^2}^2 + |k|^4 \| \psi_k\|^2_{L^2}
	\le -2\nu\operatorname{Re}\langle \Delta_k \omega_k,\, iky\partial_y \omega_k \rangle .
	\end{align}
\end{lemma}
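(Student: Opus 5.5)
We derive \eqref{molify_ID} from \eqref{ID} by trading half of the good term $2\|iky\omega_k\|_{L^2}^2$ for control of $|k|^4\|\psi_k\|_{L^2}^2$. The left side of \eqref{ID} contains $2\|iky\omega_k\|^2+4|k|^2\|\partial_y\psi_k\|^2$. It therefore suffices to prove
\begin{equation*}
|k|^4\|\psi_k\|_{L^2}^2\le \|iky\omega_k\|_{L^2}^2+2|k|^2\|\partial_y\psi_k\|_{L^2}^2 .
\end{equation*}
Substituting this into \eqref{ID} leaves exactly $\|iky\omega_k\|^2+2|k|^2\|\partial_y\psi_k\|^2+|k|^4\|\psi_k\|^2$ on the left. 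The right side is unchanged.

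The key step is a weighted commutator identity based on $\omega_k=\Delta_k\psi_k$ and the fact that $[\partial_y,y]=1$. We test $\omega_k$ against $y\partial_y\psi_k$, or equivalently against $(y\psi_k)'$, and integrate by parts. For real-valued pieces, $\int \psi''\, y\psi'\,dy=-\tfrac12\int|\psi'|^2$ and $\int \psi\, y\psi'\,dy=-\tfrac12\int|\psi|^2$. This gives
\begin{equation*}
\operatorname{Re}\langle \omega_k, y\partial_y\psi_k\rangle=-\tfrac12\|\partial_y\psi_k\|^2+\tfrac{k^2}{2}\|\psi_k\|^2 .
\end{equation*}
Hence
\begin{equation*}
\tfrac{k^2}{2}\|\psi_k\|^2=\tfrac12\|\partial_y\psi_k\|^2+\operatorname{Re}\langle y\omega_k,\partial_y\psi_k\rangle\le \tfrac12\|\partial_y\psi_k\|^2+\|y\omega_k\|\,\|\partial_y\psi_k\|.
\end{equation*}
We multiply by $2k^2$ and apply Young's inequality as
\begin{equation*}
2|k|^2\|y\omega_k\|\|\partial_y\psi_k\|\le |k|^2\|y\omega_k\|^2+|k|^2\|\partial_y\psi_k\|^2 .
\end{equation*}
This yields
\begin{equation*}
|k|^4\|\psi_k\|^2\le \|iky\omega_k\|^2+2|k|^2\|\partial_y\psi_k\|^2,
\end{equation*}
which is precisely the required inequality, with matching constants.

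The main obstacle is justifying the integrations by parts on $\mathbb R$, namely that the boundary terms $y|\psi_k|^2$ and $y|\partial_y\psi_k|^2$ vanish at infinity. For $k\neq0$, the operator $\Delta_k$ is invertible with exponentially decaying kernel $e^{-|k||y-y'|}/(2|k|)$. Consequently $y\omega_k\in L^2$ implies enough decay of $\psi_k$ and $\partial_y\psi_k$. We would argue this first for Schwartz-class data and then pass to the limit by density, which the \lq\lq sufficiently smooth solutions\rq\rq{} hypothesis of Lemma~\ref{Zotto} permits. Complex values are handled by taking real parts throughout, which causes no difficulty.
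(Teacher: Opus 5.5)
Your proof is correct, but it reaches \eqref{molify_ID} by a different route than the paper.

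The paper does not isolate an elliptic inequality. It proceeds in three steps:
\begin{enumerate}
\item It recomputes $\frac{\mathrm d}{\mathrm dt}\operatorname{Re}\langle\partial_y\omega_k,iky\omega_k\rangle$ directly from the linearized equation. This gives the raw identity \eqref{jiaj1}, which still contains the unsimplified cross term $4\operatorname{Re}\int k^2y\,\overline{\omega_k}\,\partial_y\psi_k\,\mathrm dy$.
\item It bounds that cross term by Young's inequality, reaching \eqref{modifyID}: $\frac{\mathrm d}{\mathrm dt}\operatorname{Re}\langle\partial_y\omega_k,iky\omega_k\rangle+2|k|^4\|\psi_k\|_{L^2}^2\le-2\nu\operatorname{Re}\langle\Delta_k\omega_k,iky\partial_y\omega_k\rangle$.
\item It averages this with \eqref{ID}.
\end{enumerate}

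Your identity $\operatorname{Re}\langle y\omega_k,\partial_y\psi_k\rangle=\tfrac12\bigl(k^2\|\psi_k\|_{L^2}^2-\|\partial_y\psi_k\|_{L^2}^2\bigr)$ is exactly what converts \eqref{jiaj1} into \eqref{ID}. Consequently, given \eqref{ID}, the paper's \eqref{modifyID} is equivalent to your static inequality $|k|^4\|\psi_k\|_{L^2}^2\le\|iky\omega_k\|_{L^2}^2+2|k|^2\|\partial_y\psi_k\|_{L^2}^2$. The two Young steps are also the same, up to a factor of $2$.

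What the paper's route buys is that \eqref{modifyID} comes straight from the evolution equation, without relying on the simplified form of \eqref{ID}. However, \eqref{ID} is needed anyway for the final averaging. Your route is therefore more economical: it uses \eqref{ID} as a black box and reduces the new content to a time-independent weighted elliptic estimate, in the spirit of Lemma~\ref{mix}.

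Your justification that the boundary terms $y|\psi_k|^2$ and $y|\partial_y\psi_k|^2$ vanish is adequate. It follows from $\omega_k,\,y\omega_k\in L^2$ via the convolution representation of $\Delta_k^{-1}$, a point the paper does not address.
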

\begin{proof}
	Differentiating the mixed inner product and separating the contributions from transport, elliptic coupling, and viscous dissipation yields
	\begin{align*}
		\frac{\mathrm{d}}{\mathrm{d}t} \operatorname{Re} \left\langle \partial_y \omega_k, i k y \omega_k \right\rangle
		&= \operatorname{Re} \int_{\mathbb{R}} \overline{\partial_y \omega_k}\, (i k y \partial_t \omega_k) \,\mathrm{d}y
		+ \operatorname{Re} \int_{\mathbb{R}} (\partial_y \partial_t \omega_k)\, \overline{i k y \omega_k} \,\mathrm{d}y \\
		&= \operatorname{Re} \int_{\mathbb{R}} \overline{\partial_y \omega_k}\, i k y
		\big( - i k y^2 \omega_k + 2 i k \psi_k + \nu(\partial_y^2-k^2)\omega_k \big) \,\mathrm{d}y \\
		&\quad + \operatorname{Re} \int_{\mathbb{R}} \big( - 2i k y \omega_k - i k y^2 \partial_y \omega_k + 2 i k \partial_y \psi_k + \nu(\partial_y^2-k^2)\partial_y \omega_k \big) \overline{i k y \omega_k} \,\mathrm{d}y \\
		&=: I_1 + I_2.
	\end{align*}
	
	We expand $I_1 = I_{1,1} + I_{1,2} + I_{1,3}$ and evaluate each term via integration by parts:
	\begin{align*}
		I_{1,1} &= \operatorname{Re} \int_{\mathbb{R}} k^2 y^3 \overline{\partial_y \omega_k}\,\omega_k \,\mathrm{d}y
		= \frac{1}{2} \int_{\mathbb{R}} k^2 y^3 \partial_y |\omega_k|^2 \,\mathrm{d}y
		= -\frac{3}{2} \|iky \omega_k\|_{L^2}^2, \\
		I_{1,2} &= -2\operatorname{Re} \int_{\mathbb{R}} k^2y \overline{\partial_y \omega_k}\,\psi_k \,\mathrm{d}y
		= 2\operatorname{Re} \int_{\mathbb{R}} k^2\overline{\omega_k}\psi_k \,\mathrm{d}y + 2\operatorname{Re} \int_{\mathbb{R}} k^2y\overline{\omega_k}\partial_y\psi_k \,\mathrm{d}y \\
		&= -2\|k\nabla_k\psi_k\|_{L^2}^2 + 2\operatorname{Re} \int_{\mathbb{R}} k^2y\overline{\omega_k}\partial_y\psi_k \,\mathrm{d}y, \\
		I_{1,3} &= -\nu \operatorname{Re} \int_{\mathbb{R}} \Delta_k \omega_k \, \overline{ik y \partial_y \omega_k} \,\mathrm{d}y
		= -\nu \operatorname{Re} \left\langle \Delta_k \omega_k, iky\partial_y \omega_k \right\rangle.
	\end{align*}
	
	Similarly, decomposing $I_2 = I_{2,1} + I_{2,2} + I_{2,3} + I_{2,4}$, we find
	\begin{align*}
		I_{2,1} &= -2\operatorname{Re} \int_{\mathbb{R}} k^2y^2|\omega_k|^2 \,\mathrm{d}y = -2\|iky\omega_k\|_{L^2}^2, \\
		I_{2,2} &= \operatorname{Re} \int_{\mathbb{R}} (-iky^2\partial_y \omega_k) \overline{(iky\omega_k)} \,\mathrm{d}y = \frac{3}{2} \|iky\omega_k\|_{L^2}^2, \\
		I_{2,3} &= 2\operatorname{Re} \int_{\mathbb{R}} k^2y\,\overline{\omega_k}\,\partial_y\psi_k \,\mathrm{d}y, \\
		I_{2,4} &= \nu \operatorname{Re}\left\langle \Delta_k\partial_y \omega_k, iky\omega_k \right\rangle
		= -\nu \operatorname{Re}\left\langle \Delta_k \omega_k, ik\omega_k \right\rangle - \nu \operatorname{Re}\left\langle \Delta_k \omega_k, iky\partial_y \omega_k \right\rangle \\
		&= -\nu \operatorname{Re}\left\langle \Delta_k \omega_k, iky\partial_y \omega_k \right\rangle,
	\end{align*}
	where we used the fact that $\operatorname{Re}\langle \Delta_k \omega_k, ik\omega_k \rangle = 0$ due to the self-adjointness of $\Delta_k$.
	
	Summing the contributions from $I_1$ and $I_2$, we obtain
	\begin{align}\label{jiaj1}
		\frac{\mathrm{d}}{\mathrm{d}t} \operatorname{Re} \left\langle \partial_y \omega_k, iky \omega_k \right\rangle
		=& -2\|k\nabla_k\psi_k\|_{L^2}^2 - 2\|ik y \omega_k\|_{L^2}^2
		 \nn\\
		&+ 4\operatorname{Re} \int_{\mathbb{R}} k^2y\,\overline{\omega_k}\,\partial_y\psi_k \,\mathrm{d}y- 2\nu \operatorname{Re} \left\langle \Delta_k \omega_k, iky\partial_y \omega_k \right\rangle.
	\end{align}
	By applying Cauchy-Schwarz and Young's inequalities, the cross term is bounded by
	\begin{equation*}
		4\operatorname{Re} \int_{\mathbb{R}} k^2y\,\overline{\omega_k}\,\partial_y\psi_k \,\mathrm{d}y \le 2\left( \|iky\omega_k\|_{L^2}^2 + \|ik\partial_y\psi_k\|_{L^2}^2 \right).
	\end{equation*}
	Substituting this bound into \eqref{jiaj1} and observing the exact cancellation of the terms $-2\|ik\partial_y\psi_k\|_{L^2}^2$ (contained within $-2\|k\nabla_k\psi_k\|_{L^2}^2$) and $-2\|iky\omega_k\|_{L^2}^2$, we deduce
	\begin{equation}\label{modifyID}
		\frac{\mathrm{d}}{\mathrm{d}t} \operatorname{Re} \left\langle \partial_y \omega_k, iky \omega_k \right\rangle + 2|k|^4\| \psi_k\|_{L^2}^2
		\le -2\nu \operatorname{Re} \left\langle \Delta_k \omega_k, iky\partial_y \omega_k \right\rangle.
	\end{equation}
	Finally, averaging \eqref{ID} and \eqref{modifyID} yields the desired inequality \eqref{molify_ID}.
\end{proof}

We now construct the modewise hypocoercive energy functional. For each $k \neq 0$, we define
\begin{align}\label{Ek_def}
	E_k[\omega_k](t) =& \frac{1}{2} \|\omega_k\|_{L^2}^2 + \frac{A}{2}\|\nabla_k \omega_k\|_{L^2}^2
	\nn\\
&+ \frac{C}{2}\big(\|iky \omega_k\|_{L^2}^2 + 2\|ik\nabla_k \psi_k\|_{L^2}^2 \big)
	+ B\operatorname{Re}\langle \partial_y \omega_k, iky\omega_k \rangle,
\end{align}
where the scaling parameters are chosen to reflect the enhanced dissipation rates:
\begin{equation*}
	A = \alpha \nu^{1/2}|k|^{-1/2}, \qquad
	B = \beta |k|^{-1}, \qquad
	C = \gamma \nu^{-1/2}|k|^{-3/2},
\end{equation*}
for some absolute constants $\alpha, \beta, \gamma > 0$ to be determined.

Taking the corresponding linear combination of the identities established in Lemma \ref{Zotto}, and bounding the cross terms via the Cauchy--Schwarz and Young's inequalities, we can estimate the evolution of $E_k$:
\begin{align*}
	&\frac{\mathrm{d}}{\mathrm{d}t} E_k[\omega_k]
	+ \nu\|\nabla_k \omega_k\|_{L^2}^2
	+ \alpha\nu^{3/2}|k|^{-1/2} \|\Delta_k \omega_k\|_{L^2}^2+ \beta |k|\| y\omega_k \|_{L^2}^2 \\
	&\qquad
	+ 2\beta |k|\|\partial_y\psi_k \|_{L^2}^2
	+ \beta |k|^3\|\psi_k\|_{L^2}^2+ \gamma\nu^{1/2}|k|^{1/2} \|\omega_k\|_{L^2}^2
	+ \gamma\nu^{1/2}|k|^{1/2} \|y \nabla_k \omega_k\|_{L^2}^2 \\
	&\quad\le 2\alpha\nu^{1/2}|k|^{-1/2} \big|\langle iky\omega_k, \partial_y \omega_k \rangle\big|
	+ 2\nu\beta |k|^{-1} \big|\langle \Delta_k \omega_k, iky\partial_y \omega_k \rangle\big| \\
	&\quad\le \frac{3\nu}{4}\|\partial_y \omega_k\|_{L^2}^2 + \frac43 \alpha^2 |k|\|y \omega_k\|_{L^2}^2
	+ \frac{\alpha}{2}\nu^{3/2}|k|^{-1/2} \|\Delta_k \omega_k\|_{L^2}^2
	+ \frac{2\beta^2}{\alpha}\nu^{1/2}|k|^{1/2} \|y\partial_y \omega_k\|_{L^2}^2.
\end{align*}
Absorbing the terms from the right-hand side into the left-hand side yields the dissipation bound
\begin{equation}\label{Ek_dissipation}
	\begin{aligned}
		\frac{\mathrm{d}}{\mathrm{d}t}E_k[\omega_k]
		+& \frac{1}{4} \nu\|\nabla_k\omega_k\|_{L^2}^2
		+ \frac{\alpha}{2}\nu^{3/2}|k|^{-1/2} \|\Delta_k \omega_k\|_{L^2}^2 \\
		&+ (\beta-\frac43 \alpha^2)|k|\|y \omega_k\|_{L^2}^2
		+ 2\beta |k|\|\partial_y\psi_k\|_{L^2}^2
		+ \beta |k|^3\|\psi_k\|_{L^2}^2 \\
		&+ \gamma\nu^{1/2}|k|^{1/2}\|\omega_k\|_{L^2}^2
		+ \left( \gamma - \frac{2\beta^2}{\alpha} \right) \nu^{1/2}|k|^{1/2} \|y \nabla_k \omega_k\|_{L^2}^2
		\le 0.
	\end{aligned}
\end{equation}
To verify that the modewise functional $E_k[\omega_k]$ is coercive,
we estimate the mixed term in terms of the positive diagonal terms in
\eqref{Ek_def}.
Since $k\neq0$, we have
\begin{align*}
\|iky\omega_k\|_{L^2}^2
=|k|^2\|y\omega_k\|_{L^2}^2,
\qquad
\|ik\nabla_k\psi_k\|_{L^2}^2
=|k|^2\|\nabla_k\psi_k\|_{L^2}^2.
\end{align*}
By the Cauchy--Schwarz and Young inequalities,
\begin{align}\label{Ek_mixed_control}
\big|
B\operatorname{Re}
\langle \partial_y\omega_k,iky\omega_k\rangle
\big|
&\le
\frac{\beta^2}{\gamma}
\nu^{1/2}|k|^{-1/2}
\|\partial_y\omega_k\|_{L^2}^2
+
\frac{\gamma}{4}
\nu^{-1/2}|k|^{-3/2}
\|iky\omega_k\|_{L^2}^2
\nonumber\\
&\le
\frac{\beta^2}{\gamma}
\nu^{1/2}|k|^{-1/2}
\|\nabla_k\omega_k\|_{L^2}^2
+
\frac{\gamma}{4}
\nu^{-1/2}|k|^{-3/2}
\|iky\omega_k\|_{L^2}^2.
\end{align}
Consequently, using the definition \eqref{Ek_def}, we obtain the lower
bound
\begin{align}\label{Ek_lower_bound}
E_k[\omega_k]
\ge&
\frac12\|\omega_k\|_{L^2}^2
+
\left(
\frac{\alpha}{2}-\frac{\beta^2}{\gamma}
\right)
\nu^{1/2}|k|^{-1/2}
\|\nabla_k\omega_k\|_{L^2}^2
\nonumber\\
&
+\frac{\gamma}{4}
\nu^{-1/2}|k|^{1/2}
\|y\omega_k\|_{L^2}^2
+\gamma
\nu^{-1/2}|k|^{1/2}
\|\nabla_k\psi_k\|_{L^2}^2.
\end{align}
In particular, if
\begin{equation}\label{Ek_coercivity_condition}
\frac{\alpha}{2}-\frac{\beta^2}{\gamma}>0,
\end{equation}
then $E_k[\omega_k]$ controls all the positive terms appearing in
\eqref{Ek_def}.

The reverse estimate follows from \eqref{Ek_mixed_control}. Namely,
\begin{align}\label{Ek_upper_bound}
E_k[\omega_k]
\le&
\frac12\|\omega_k\|_{L^2}^2
+
\left(
\frac{\alpha}{2}+\frac{\beta^2}{\gamma}
\right)
\nu^{1/2}|k|^{-1/2}
\|\nabla_k\omega_k\|_{L^2}^2
\nonumber\\
&
+\frac{3\gamma}{4}
\nu^{-1/2}|k|^{1/2}
\|y\omega_k\|_{L^2}^2
+\gamma
\nu^{-1/2}|k|^{1/2}
\|\nabla_k\psi_k\|_{L^2}^2.
\end{align}
Combining \eqref{Ek_lower_bound} and \eqref{Ek_upper_bound}, we conclude
that
\begin{equation}\label{Ekdengjia}
E_k[\omega_k]
\approx
\|\omega_k\|_{L^2}^2
+\nu^{1/2}|k|^{-1/2}\|\nabla_k\omega_k\|_{L^2}^2
+\nu^{-1/2}|k|^{1/2}
\left(
\|y\omega_k\|_{L^2}^2
+\|\nabla_k\psi_k\|_{L^2}^2
\right),
\end{equation}
where the implicit constants are uniform with respect to
$\nu\in(0,1)$ and $k\neq0$.

We now fix
$
(\alpha,\beta,\gamma)=(1,2,9).
$
For this choice,
\begin{align*}
\frac{\alpha}{2}-\frac{\beta^2}{\gamma}
=\frac1{18}>0,
\qquad
\beta-\frac43\alpha^2=\frac23>0,
\qquad
\gamma-\frac{2\beta^2}{\alpha}=1>0,
\end{align*}
and
\begin{align*}
\frac{2\beta^2}{\alpha\gamma}
=\frac89<1.
\end{align*}

Hence both the uniform equivalence \eqref{Ekdengjia} and the
positivity of all the dissipation coefficients in
\eqref{Ek_dissipation} hold for every $\nu\in(0,1)$ and every
$k\neq0$. In particular, after possibly decreasing the universal
constant $c>0$, \eqref{Ek_dissipation} implies
\begin{equation}\label{Ek_dissipation_coercive}
\frac{\mathrm{d}}{\mathrm{d}t}E_k[\omega_k]
+c\sum_{j=1}^{5}\mathcal D_{j,k}[\omega_k]
\le0,
\end{equation}
where the constituent dissipation terms are defined as
\begin{equation*}
	\begin{alignedat}{2}
		\mathcal{D}_{1,k}[\omega_k] &:= \nu\|\nabla_k \omega_k\|_{L^2}^2, &\qquad
		\mathcal{D}_{2,k}[\omega_k] &:= \nu^{3/2}|k|^{-1/2}\|\Delta_k \omega_k\|_{L^2}^2,\\
		\mathcal{D}_{3,k}[\omega_k] &:= |k|\|y\omega_k\|_{L^2}^2 + |k|\|\nabla_k\psi_k\|_{L^2}^2, &\qquad
		\mathcal{D}_{4,k}[\omega_k] &:= \nu^{1/2}|k|^{1/2}\|\omega_k\|_{L^2}^2,\\
		\mathcal{D}_{5,k}[\omega_k] &:= \nu^{1/2}|k|^{1/2} \|y\nabla_k \omega_k\|_{L^2}^2. & &
	\end{alignedat}
\end{equation*}

\section{Preliminary Lemmas}\label{sec:preliminary-lemmas}
Before bounding the nonlinear interactions, we next define the energy and dissipation functionals used in the bootstrap argument.
\begin{equation}\label{def_E0}
	\mathcal{E}_0[\omega_0] = \|\omega_0\|_{L^2}^2 + \alpha \nu^{1/3}\|\partial_y \omega_0\|_{L^2}^2,
	\qquad
	\mathcal{D}_0[\omega_0] = \nu\|\partial_y \omega_0\|_{L^2}^2 + \alpha \nu^{4/3}\|\partial_y^2 \omega_0\|_{L^2}^2,
\end{equation}
where $\alpha > 0$ is a structural parameter to be determined. To capture the enhanced dissipation of the fluctuating modes, we introduce the exponentially weighted functionals
\begin{equation}\label{def_Eneq}
	\mathcal{E}_{\neq}[\omega_k] = \sum_{k\neq 0} e^{2\delta_0 \nu^{1/2}t} |k|^{2m} E_k[\omega_k],
	\qquad
	\mathcal{D}_{\neq}[\omega_k] = \sum_{j=1}^5 \mathcal{D}_{\neq,j}[\omega_k],
\end{equation}
where the individual dissipation components are given by
\begin{equation*}
	\mathcal{D}_{\neq,j}[\omega_k] = \sum_{k\neq 0} e^{2\delta_0 \nu^{1/2}t} |k|^{2m} \mathcal{D}_{j,k}[\omega_k], \qquad j \in \Big\{1, 2, 3, 4, 5\Big\}.
\end{equation*}
We define the total energy and dissipation by \begin{equation*}
	\mathcal{E}[\omega] := \mathcal{E}_0[\omega_0] + \mathcal{E}_{\neq}[\omega_k], \qquad \mathcal{D}[\omega] := \mathcal{D}_0[\omega_0] + \mathcal{D}_{\neq}[\omega_k].
\end{equation*}
When no confusion can arise, we suppress the arguments of these
functionals and simply write $\mathcal{E}_0$, $\mathcal{D}_0$,   $\mathcal{E}_{\neq}$,
 $\mathcal{D}_{\neq}$,   $\mathcal{D}_{\neq,j}$, $\mathcal{E}$, and $\mathcal{D}$.

In the subsequent lemmas, we collect several interpolation, Sobolev, and elliptic estimates used in the nonlinear estimates.

\begin{lemma}\label{psi_0}
	The zero-mode velocity and vorticity satisfy the following pointwise $L^\infty_y$ bounds:
	\begin{align}
		\|\partial_y\psi_0\|_{L^\infty} &\lesssim \|\partial_y\psi_0\|_{L^2}^{1/2}\|\omega_0\|_{L^2}^{1/2}, \label{zero_velocity_Linfty}\\
		\|\omega_0\|_{L^\infty} &\lesssim \nu^{-1/12}\mathcal{E}_0^{1/2}. \label{zero_vorticity_Linfty}
	\end{align}
\end{lemma}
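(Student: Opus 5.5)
Both bounds are one-dimensional Gagliardo--Nirenberg (Agmon) inequalities on $\mathbb R$. The basic tool is the elementary estimate
\begin{equation*}
\|f\|_{L^\infty(\mathbb R)}^2\le 2\|f\|_{L^2}\|f'\|_{L^2},
\end{equation*}
valid for $f\in H^1(\mathbb R)$. It follows from writing $|f(y)|^2=\int_{-\infty}^y \partial_s|f(s)|^2\,\mathrm ds$ and applying Cauchy--Schwarz, after a density argument; decay at $-\infty$ is automatic for $H^1$ functions.

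For \eqref{zero_velocity_Linfty}, apply this inequality to $f=\partial_y\psi_0$. On the zero mode, $\omega_0=\Delta\psi_0=\partial_y^2\psi_0$, so $f'=\omega_0$. This gives directly
\begin{equation*}
\|\partial_y\psi_0\|_{L^\infty}\lesssim \|\partial_y\psi_0\|_{L^2}^{1/2}\|\omega_0\|_{L^2}^{1/2}.
\end{equation*}
The only point to check is that $\partial_y\psi_0\in H^1(\mathbb R)$. This holds because $\partial_y\psi_0=-u_{1,0}$ is the zero mode of the velocity perturbation, which lies in $L^2$ by the standing regularity assumptions, and its derivative is $\omega_0\in L^2$.

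For \eqref{zero_vorticity_Linfty}, apply the same inequality to $f=\omega_0$:
\begin{equation*}
\|\omega_0\|_{L^\infty}\lesssim \|\omega_0\|_{L^2}^{1/2}\|\partial_y\omega_0\|_{L^2}^{1/2}.
\end{equation*}
By the definition \eqref{def_E0}, $\|\omega_0\|_{L^2}\le \mathcal E_0^{1/2}$ and $\|\partial_y\omega_0\|_{L^2}\le \alpha^{-1/2}\nu^{-1/6}\mathcal E_0^{1/2}$. Hence
\begin{equation*}
\|\omega_0\|_{L^\infty}\lesssim \alpha^{-1/4}\,\nu^{-1/12}\,\mathcal E_0^{1/2},
\end{equation*}
and since $\alpha$ is a fixed structural constant, the factor $\alpha^{-1/4}$ is absorbed into the implicit constant.

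There is no real obstacle here; the only care needed is the bookkeeping of the power of $\nu$. The $\nu^{1/3}$ weight on $\|\partial_y\omega_0\|_{L^2}^2$ in $\mathcal E_0$ becomes a factor $\nu^{-1/6}$ on $\|\partial_y\omega_0\|_{L^2}$, and this is halved by the square root in the interpolation, which produces exactly the exponent $-1/12$.
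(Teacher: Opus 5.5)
Your proposal is correct and follows the paper's proof: both apply the one-dimensional Gagliardo--Nirenberg (Agmon) inequality to $\partial_y\psi_0$ using $\partial_y^2\psi_0=\omega_0$, and to $\omega_0$ together with the bounds $\|\omega_0\|_{L^2}\le\mathcal{E}_0^{1/2}$ and $\|\partial_y\omega_0\|_{L^2}\le\alpha^{-1/2}\nu^{-1/6}\mathcal{E}_0^{1/2}$ that come from the definition of $\mathcal{E}_0$. Your check that $\partial_y\psi_0\in H^1(\mathbb{R})$ and your tracking of the $\alpha^{-1/4}$ factor add details the paper leaves implicit.
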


\begin{proof}
	The first estimate \eqref{zero_velocity_Linfty} is a direct consequence of the one-dimensional Gagliardo--Nirenberg interpolation inequality, utilizing the identity $\partial_y^2\psi_0 = \omega_0$.
	
	Applying the same interpolation inequality to the zero-mode vorticity yields
	\begin{equation*}
		\|\omega_0\|_{L^\infty} \lesssim \|\omega_0\|_{L^2}^{1/2}\|\partial_y\omega_0\|_{L^2}^{1/2}.
	\end{equation*}
	Recalling the definition of the zero-mode energy \eqref{def_E0}, we have the uniform bounds $\|\omega_0\|_{L^2} \le \mathcal{E}_0^{1/2}$ and $\|\partial_y\omega_0\|_{L^2} \le \alpha^{-1/2} \nu^{-1/6}\mathcal{E}_0^{1/2}$. Substituting these into the interpolation bound  gives \eqref{zero_vorticity_Linfty}, and the proof is complete.
\end{proof}
\begin{lemma}\label{w_neq}
	The following interpolation estimates hold for the nonzero modes:
	\begin{align}
		\sum_{k\neq 0}|k|^{2m+1}\|\omega_k\|_{L^2}^2
		&\lesssim \nu^{-2/3} e^{-2\delta_0\nu^{1/2}t} \mathcal{D}_{\neq,1}^{1/3} \mathcal{D}_{\neq,4}^{2/3}, \label{w1} \\
		\sum_{k\neq 0} |k|^{2m+\frac{3}{2}}\|\omega_k\|_{L^2}^2
		&\lesssim \nu^{-5/6} e^{-2\delta_0\nu^{1/2}t} \mathcal{D}_{\neq,1}^{2/3} \mathcal{D}_{\neq,4}^{1/3}, \label{w2} \\
		\sum_{k\neq 0}\|\omega_k\|_{L^\infty}
		&\lesssim \nu^{-1/8} e^{-\delta_0\nu^{1/2}t} \mathcal{E}_{\neq}^{1/2}. \label{w3}
	\end{align}
\end{lemma}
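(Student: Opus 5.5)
Each of the three estimates is proved mode by mode and then summed over $k$, using the definitions of $\mathcal D_{\neq,1}$, $\mathcal D_{\neq,4}$ and $\mathcal E_{\neq}$ together with the coercivity \eqref{Ekdengjia}. Write $a_k:=|k|^{2m}e^{2\delta_0\nu^{1/2}t}$. By definition,
\[
\mathcal D_{\neq,1}\ge \sum_{k\neq0}a_k\,\nu |k|^2\|\omega_k\|_{L^2}^2,
\qquad
\mathcal D_{\neq,4}=\sum_{k\neq0}a_k\,\nu^{1/2}|k|^{1/2}\|\omega_k\|_{L^2}^2,
\]
where the first bound uses $\|\nabla_k\omega_k\|^2\ge |k|^2\|\omega_k\|^2$.

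For \eqref{w1}, factor $|k|^{2m+1}\|\omega_k\|^2 = (|k|^{2m+2}\|\omega_k\|^2)^{1/3}(|k|^{2m+1/2}\|\omega_k\|^2)^{2/3}$; the exponent check is $\tfrac{2m+2}{3}+\tfrac{2(2m+1/2)}{3}=2m+1$. Apply Hölder in $k$ with exponents $(3,\tfrac32)$. Inserting the weights gives the prefactor
\[
e^{-2\delta_0\nu^{1/2}t}\,\nu^{-1/3}\,(\nu^{-1/2})^{2/3}=\nu^{-2/3}e^{-2\delta_0\nu^{1/2}t}.
\]
The bound \eqref{w2} is identical, using exponents $2/3,1/3$: we have $\tfrac{2(2m+2)}{3}+\tfrac{2m+1/2}{3}=2m+\tfrac32$, and the prefactor is $\nu^{-2/3}\nu^{-1/6}=\nu^{-5/6}$.

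For \eqref{w3}, I would first use the one-dimensional Gagliardo--Nirenberg inequality
\[
\|\omega_k\|_{L^\infty}\lesssim \|\omega_k\|_{L^2}^{1/2}\|\partial_y\omega_k\|_{L^2}^{1/2}.
\]
By \eqref{Ekdengjia}, $\|\omega_k\|^2\lesssim E_k$ and $\|\partial_y\omega_k\|^2\lesssim \nu^{-1/2}|k|^{1/2}E_k$. Hence
\[
\|\omega_k\|_{L^\infty}\lesssim \nu^{-1/8}|k|^{1/8}E_k^{1/2}.
\]
Summing with Cauchy--Schwarz gives
\[
\sum_{k\neq0}\|\omega_k\|_{L^\infty}\lesssim \nu^{-1/8}\Big(\sum_{k\neq0}|k|^{1/4-2m}\Big)^{1/2}\Big(\sum_{k\neq0}|k|^{2m}E_k\Big)^{1/2}.
\]
The last factor equals $e^{-\delta_0\nu^{1/2}t}\mathcal E_{\neq}^{1/2}$, and the middle sum converges since $2m-\tfrac14>1$ for $m>9/8$ (indeed for $m>5/8$).

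There is no real obstacle here. The only points needing care are the bookkeeping of the $\nu$ powers and exponential weights, and confirming that \eqref{Ekdengjia} gives the $\partial_y\omega_k$ control with constants uniform in $k$ and $\nu$, which it does by the choice $(\alpha,\beta,\gamma)=(1,2,9)$.
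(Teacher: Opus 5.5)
Your proposal is correct and follows the paper's proof essentially step for step. For \eqref{w1}--\eqref{w2} you use the same $(\tfrac13,\tfrac23)$ and $(\tfrac23,\tfrac13)$ weight splittings with discrete H\"older, and for \eqref{w3} the same one-dimensional Gagliardo--Nirenberg bound followed by a weighted discrete Cauchy--Schwarz/H\"older in $k$. Two of your details are slightly more accurate than the paper's wording. You obtain $\|\partial_y\omega_k\|_{L^2}^2\lesssim\nu^{-1/2}|k|^{1/2}E_k$ from the coercivity \eqref{Ekdengjia}, whereas the paper cites the bare definition of $E_k$, which contains the indefinite cross term. You also correctly note that $m>5/8$ already makes the series converge.
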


\begin{proof}
	To establish \eqref{w1}, we  decompose the Fourier weight $|k|^{2m+1}$ into a convex combination of the weights naturally appearing in $\mathcal{D}_{\neq,1}$ and $\mathcal{D}_{\neq,4}$, namely $|k|^{2m+2}$ and $|k|^{2m+1/2}$. Applying the discrete H\"older inequality yields
	\begin{align*}
		\sum_{k\neq 0} |k|^{2m+1}\|\omega_k\|_{L^2}^2
		\le& \sum_{k\neq 0} \left( |k|^{2m+2}\|\omega_k\|_{L^2}^2 \right)^{1/3} \left( |k|^{2m+1/2}\|\omega_k\|_{L^2}^2 \right)^{2/3} \\
		\le& \Bigg( \sum_{k\neq 0} |k|^{2m+2}\|\omega_k\|_{L^2}^2 \Bigg)^{1/3} \Bigg( \sum_{k\neq 0} |k|^{2m+1/2}\|\omega_k\|_{L^2}^2 \Bigg)^{2/3}\\
\lesssim& \nu^{-2/3}
	\mathcal{D}_{\neq,1}^{1/3}
	\mathcal{D}_{\neq,4}^{2/3}
	e^{-2\delta_0\nu^{1/2}t},
	\end{align*}
	 which  gives \eqref{w1}.
	
	Similarly, interpolating the weights \(2m+2\) and \(2m+\frac12\) with exponents \(\frac23\) and \(\frac13\), respectively, gives \(2m+\frac32\). Hence,
	\begin{align*}
		\sum_{k\neq 0} |k|^{2m+\frac{3}{2}}\|\omega_k\|_{L^2}^2
		&\le \Bigg( \sum_{k\neq 0} |k|^{2m+2}\|\omega_k\|_{L^2}^2 \Bigg)^{2/3} \Bigg( \sum_{k\neq 0} |k|^{2m+1/2}\|\omega_k\|_{L^2}^2 \Bigg)^{1/3} \\
		&\lesssim \nu^{-5/6} e^{-2\delta_0\nu^{1/2}t} \mathcal{D}_{\neq,1}^{2/3} \mathcal{D}_{\neq,4}^{1/3},
	\end{align*}
	verifying \eqref{w2}.
	
	Finally, to prove the $L^\infty_y$ bound \eqref{w3}, we first apply the one-dimensional Gagliardo--Nirenberg inequality to each mode. Splitting the frequency weights according to the factors appearing in  $\mathcal{E}_{\neq}$, we obtain
	\begin{equation*}
		\sum_{k\neq 0} \|\omega_k\|_{L^\infty}
		\lesssim \sum_{k\neq 0} |k|^{\frac{1}{8}-m} \left( |k|^{2m}\|\omega_k\|_{L^2}^2 \right)^{1/4} \left( |k|^{2m-\frac{1}{2}}\|\partial_y \omega_k\|_{L^2}^2 \right)^{1/4}.
	\end{equation*}
	Applying the discrete H\"older inequality to the sum on the right-hand side yields
	\begin{equation*}
		\sum_{k\neq 0} \|\omega_k\|_{L^\infty} \lesssim \Big( \sum_{k\neq 0} |k|^{\frac{1}{4}-2m} \Big)^{1/2} \Big( \sum_{k\neq 0} |k|^{2m}\|\omega_k\|_{L^2}^2 \Big)^{1/4} \Big( \sum_{k\neq 0} |k|^{2m-\frac{1}{2}}\|\partial_y \omega_k\|_{L^2}^2 \Big)^{1/4}.
	\end{equation*}
	Since $m > \frac{9}{8}$, the power $\frac{1}{4}-2m < -2$, ensuring that the first sum converges to an absolute constant. For the remaining factors, we recall the definition of $E_k[\omega_k]$ in \eqref{Ek_def}, which directly implies $\|\omega_k\|_{L^2}^2 \le 2 E_k$ and $\|\partial_y \omega_k\|_{L^2}^2 \le 2\nu^{-1/2}|k|^{1/2}E_k$. Bounding these sums by the total energy $\mathcal{E}_{\neq}$ extracts the scaling $\nu^{-1/8}$ and the exponential decay factor $e^{-\delta_0\nu^{1/2}t}$, completing the proof of \eqref{w3}.
\end{proof}
\begin{lemma}\label{psi_neq}
	Let $m > {9}/{8}$. The dissipation functionals yield the following $L^\infty_y$ bounds on the fluctuating stream function and its derivatives:
	\begin{align}
		\sum_{k\neq 0} |k|^{2m+2}\|\psi_k\|_{L^\infty}^2
		\lesssim& e^{-2\delta_0 \nu^{1/2}t} \mathcal{D}_{\neq,3},\label{psi_neq1}\\
		\sum_{k\neq 0} |k|^{2m+3}\| \psi_k \|_{L^\infty}^2
		\lesssim& \nu^{-1/3} e^{-2\delta_0\nu^{1/2}t} \mathcal{D}_{\neq,1}^{1/6} \mathcal{D}_{\neq,3}^{1/2} \mathcal{D}_{\neq,4}^{1/3},\label{psi_neq2}\\
		\sum_{\ell\neq 0} |\ell|\,\|\partial_y\psi_\ell\|_{L^\infty}
		\lesssim& \nu^{-1/8} e^{-\delta_0 \nu^{1/2}t} \mathcal{D}_{\neq,3}^{1/4} \mathcal{D}_{\neq,4}^{1/4}.\label{psi_neq3}
	\end{align}
\end{lemma}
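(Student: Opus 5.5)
All three bounds come from the one-dimensional Gagliardo--Nirenberg inequality $\|f\|_{L^\infty}^2\lesssim\|f\|_{L^2}\|\partial_yf\|_{L^2}$, applied mode by mode. The resulting factors are then recognized as pieces of $\mathcal{D}_{\neq,3}$ and $\mathcal{D}_{\neq,4}$, together with the elliptic relation $\omega_k=\Delta_k\psi_k$. Recall that
\begin{equation*}
e^{-2\delta_0\nu^{1/2}t}\mathcal{D}_{\neq,3}=\sum_{k\neq0}|k|^{2m+1}\bigl(\|y\omega_k\|_{L^2}^2+\|\nabla_k\psi_k\|_{L^2}^2\bigr),
\end{equation*}
and $\|\nabla_k\psi_k\|_{L^2}^2=\|\partial_y\psi_k\|_{L^2}^2+k^2\|\psi_k\|_{L^2}^2$.

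\emph{Bound \eqref{psi_neq1}.} We write
\begin{equation*}
|k|^{2m+2}\|\psi_k\|_{L^\infty}^2\lesssim |k|^{2m+2}\|\psi_k\|_{L^2}\|\partial_y\psi_k\|_{L^2}=\bigl(|k|^{2m+3}\|\psi_k\|_{L^2}^2\bigr)^{1/2}\bigl(|k|^{2m+1}\|\partial_y\psi_k\|_{L^2}^2\bigr)^{1/2}.
\end{equation*}
Both factors are bounded by the $k$-th summand of $e^{-2\delta_0\nu^{1/2}t}\mathcal{D}_{\neq,3}$. Summing with Cauchy--Schwarz gives \eqref{psi_neq1}.

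\emph{Bound \eqref{psi_neq2}.} One extra power of $|k|$ is needed, which $\mathcal{D}_{\neq,3}$ alone cannot supply. I would use an $L^2$ bound of $\partial_y\psi_k$ by $\omega_k$. Testing $\omega_k=\Delta_k\psi_k$ against $\psi_k$ gives $\|\nabla_k\psi_k\|_{L^2}^2\le\|\psi_k\|_{L^2}\|\omega_k\|_{L^2}$. Hence
\begin{equation*}
\|\psi_k\|_{L^\infty}^2\lesssim\|\psi_k\|_{L^2}\|\partial_y\psi_k\|_{L^2}\lesssim\|\psi_k\|_{L^2}^{3/2}\|\omega_k\|_{L^2}^{1/2}.
\end{equation*}
The plan is then to distribute $|k|^{2m+3}$ in the summand as a product of powers of
\begin{equation*}
a_k=|k|^{2m+3}\|\psi_k\|_{L^2}^2,\qquad b_k=|k|^{2m+1}\|\partial_y\psi_k\|_{L^2}^2,\qquad d_{1,k}=\nu k^2|k|^{2m}\|\omega_k\|_{L^2}^2,\qquad d_{4,k}=\nu^{1/2}|k|^{2m+1/2}\|\omega_k\|_{L^2}^2.
\end{equation*}
Here $a_k$ and $b_k$ belong to $\mathcal{D}_{\neq,3}$, $d_{1,k}$ to $\mathcal{D}_{\neq,1}$, and $d_{4,k}$ to $\mathcal{D}_{\neq,4}$. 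Since $\mathcal{D}_{\neq,1}^{1/6}\mathcal{D}_{\neq,4}^{1/3}$ carries $\nu^{1/6+1/6}=\nu^{1/3}$, the target $\nu^{-1/3}$ is consistent. Concretely, I take
\begin{equation*}
|k|^{2m+3}\|\psi_k\|_{L^\infty}^2\lesssim a_k^{1/4}b_k^{1/4}\bigl(|k|^{2m+3/2}\|\omega_k\|_{L^2}^2\bigr)^{1/2}.
\end{equation*}
The $|k|$ powers check: $\tfrac14(2m+3)+\tfrac14(2m+1)+\tfrac12(2m+\tfrac32)=2m+\tfrac74$. This is below $2m+3$, so I will tune which $\psi$-factor carries the $k^2$ (using $k^2\|\psi_k\|_{L^2}\le\|\nabla_k\psi_k\|_{L^2}$ variants) until the exponents match exactly. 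Note that $(|k|^{2m+3/2}\|\omega_k\|_{L^2}^2)^{1/2}$ is controlled, as in \eqref{w2}, by $\nu^{-1/3}d_{1,k}^{1/3}d_{4,k}^{1/6}$. A three-factor discrete H\"older inequality with exponents $(2,3,6)$ then produces $\mathcal{D}_{\neq,3}^{1/2}\mathcal{D}_{\neq,1}^{1/6}\mathcal{D}_{\neq,4}^{1/3}$ with factor $\nu^{-1/3}$. The bookkeeping of exponents is routine once this pattern is fixed.

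\emph{Bound \eqref{psi_neq3}.} This is the main obstacle. Gagliardo--Nirenberg gives
\begin{equation*}
\|\partial_y\psi_\ell\|_{L^\infty}\lesssim\|\partial_y\psi_\ell\|_{L^2}^{1/2}\|\partial_y^2\psi_\ell\|_{L^2}^{1/2},
\qquad \|\partial_y^2\psi_\ell\|_{L^2}\le\|\Delta_\ell\psi_\ell\|_{L^2}=\|\omega_\ell\|_{L^2}.
\end{equation*}
The second inequality follows by Plancherel in $y$ (or by integrating by parts). So
\begin{equation*}
|\ell|\,\|\partial_y\psi_\ell\|_{L^\infty}\lesssim|\ell|^{\theta}\bigl(|\ell|^{2m+1}\|\partial_y\psi_\ell\|_{L^2}^2\bigr)^{1/4}\bigl(\nu^{1/2}|\ell|^{2m+1/2}\|\omega_\ell\|_{L^2}^2\bigr)^{1/4}\nu^{-1/8},
\end{equation*}
with $\theta=1-\tfrac{2m+1}{4}-\tfrac{2m+1/2}{4}=\tfrac58-m$. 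Discrete H\"older with exponents $(2,4,4)$ yields the factor $\bigl(\sum_{\ell\neq0}|\ell|^{5/4-2m}\bigr)^{1/2}$. This sum is finite exactly when $m>9/8$, matching Remark~\ref{rem:restriction_m}. The remaining two factors are $e^{-\delta_0\nu^{1/2}t}\mathcal{D}_{\neq,3}^{1/4}\mathcal{D}_{\neq,4}^{1/4}$, since each carries $e^{-\frac12\delta_0\nu^{1/2}t}$. This gives \eqref{psi_neq3}; the only delicate point is the summability condition.
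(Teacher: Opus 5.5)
Your arguments for \eqref{psi_neq1} and \eqref{psi_neq3} are correct and coincide with the paper's: Gagliardo--Nirenberg mode by mode, then Cauchy--Schwarz, respectively H\"older with exponents $(2,4,4)$ and the summability of $\sum_{\ell\neq0}|\ell|^{5/4-2m}$.

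\textbf{The gap is in \eqref{psi_neq2}.} The per-mode bound you invoke is miscomputed. Since $d_{1,k}^{1/3}d_{4,k}^{1/6}=\nu^{5/12}|k|^{m+3/4}\|\omega_k\|_{L^2}$, one has $\bigl(|k|^{2m+3/2}\|\omega_k\|_{L^2}^2\bigr)^{1/2}=\nu^{-5/12}d_{1,k}^{1/3}d_{4,k}^{1/6}$; this is the modewise form of \eqref{w2}, not $\nu^{-1/3}d_{1,k}^{1/3}d_{4,k}^{1/6}$. Moreover, H\"older applied to $a_k^{1/4}b_k^{1/4}d_{1,k}^{1/3}d_{4,k}^{1/6}$ returns $\mathcal{D}_{\neq,1}^{1/3}\mathcal{D}_{\neq,4}^{1/6}$, with the two exponents reversed relative to the target. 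So the pattern you commit to only proves $\sum_{k\neq0}|k|^{2m+3}\|\psi_k\|_{L^\infty}^2\lesssim\nu^{-5/12}e^{-2\delta_0\nu^{1/2}t}\mathcal{D}_{\neq,1}^{1/3}\mathcal{D}_{\neq,3}^{1/2}\mathcal{D}_{\neq,4}^{1/6}$. This is not \eqref{psi_neq2}; at the level of the pieces $d_{1,k},d_{4,k}$ it is worse by a factor $|k|^{1/4}$. Retuning the $\psi$-factors cannot repair this, because $a_k$ and $b_k$ already carry the largest weight that $\mathcal{D}_{\neq,3}$ allows. Also, the variant $k^2\|\psi_k\|_{L^2}\le\|\nabla_k\psi_k\|_{L^2}$ you mention is false. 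Only $|k|\,\|\psi_k\|_{L^2}\le\|\nabla_k\psi_k\|_{L^2}$ and $k^2\|\psi_k\|_{L^2}\le\|\omega_k\|_{L^2}$ hold.

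Your displayed inequality with total weight $|k|^{2m+7/4}$ is not deficient; it is true but lossy. By Plancherel,
\[
\|\omega_k\|_{L^2}^2=\|\partial_y^2\psi_k\|_{L^2}^2+2k^2\|\partial_y\psi_k\|_{L^2}^2+k^4\|\psi_k\|_{L^2}^2,
\]
so $\|\omega_k\|_{L^2}\gtrsim|k|^{3/2}\|\psi_k\|_{L^2}^{1/2}\|\partial_y\psi_k\|_{L^2}^{1/2}$. Hence your inequality holds while wasting exactly $|k|^{1/4}$, and that waste is what pushes you onto \eqref{w2}. The missing step is to let the vorticity absorb exactly one extra derivative via $|k|\,\|\partial_y\psi_k\|_{L^2}\lesssim\|\omega_k\|_{L^2}$, as the paper does:
\[
|k|^{2m+3}\|\psi_k\|_{L^\infty}^2\lesssim|k|^{2m+1}\|k\psi_k\|_{L^2}\|k\partial_y\psi_k\|_{L^2}\lesssim\bigl(|k|^{2m+1}\|\nabla_k\psi_k\|_{L^2}^2\bigr)^{1/2}\bigl(|k|^{2m+1}\|\omega_k\|_{L^2}^2\bigr)^{1/2}.
\]
Cauchy--Schwarz in $k$ then gives $\bigl(e^{-2\delta_0\nu^{1/2}t}\mathcal{D}_{\neq,3}\bigr)^{1/2}$ times the square root of \eqref{w1} (not \eqref{w2}). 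This is exactly $\nu^{-1/3}e^{-2\delta_0\nu^{1/2}t}\mathcal{D}_{\neq,1}^{1/6}\mathcal{D}_{\neq,3}^{1/2}\mathcal{D}_{\neq,4}^{1/3}$. In your notation, the correct per-mode split is $a_k^{1/4}b_k^{1/4}\bigl(|k|^{2m+1}\|\omega_k\|_{L^2}^2\bigr)^{1/2}$.

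Your test-function bound $\|\nabla_k\psi_k\|_{L^2}^2\le\|\psi_k\|_{L^2}\|\omega_k\|_{L^2}$ is true but is the wrong elliptic estimate here. After giving $\psi_k$ the maximal $\mathcal{D}_{\neq,3}$ weight, it leaves the factor $\bigl(|k|^{2m+3}\|\omega_k\|_{L^2}^2\bigr)^{1/4}$. That weight exceeds what $\mathcal{D}_{\neq,1}$ controls.
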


\begin{proof}
	For the first estimate \eqref{psi_neq1}, we apply the one-dimensional Gagliardo--Nirenberg interpolation inequality to obtain $\|\psi_k\|_{L^\infty}^2 \lesssim |k|^{-1}\|\nabla_k \psi_k\|_{L^2}^2$. Multiplying by $|k|^{2m+2}$ and summing over the nonzero frequencies yields
	\begin{equation*}
		\sum_{k\neq 0} |k|^{2m+2}\|\psi_k\|_{L^\infty}^2
		\lesssim \sum_{k\neq 0} |k|^{2m+1}\| \nabla_k \psi_k\|^2_{L^2}
		\lesssim e^{-2\delta_0 \nu^{1/2}t} \mathcal{D}_{\neq,3}.
	\end{equation*}
	
	To obtain \eqref{psi_neq2}, we again apply the Gagliardo--Nirenberg inequality, followed by the elliptic regularity estimate $\| k \partial_y \psi_k\|_{L^2} \lesssim \|\omega_k\|_{L^2}$. This allows us to bound the sum by
	\begin{align*}
		\sum_{k\neq 0} |k|^{2m+3}\| \psi_k \|_{L^\infty}^2
		&\lesssim \sum_{k\neq 0} |k|^{2m+1} \| k \psi_k \|_{L^2} \| k \partial_{y} \psi_k\|_{L^2} \\
		&\lesssim \sum_{k\neq 0} \big( |k|^{m+1/2} \| \nabla_k \psi_k \|_{L^2} \big) \big( |k|^{m+1/2} \| \omega_k\|_{L^2} \big).
	\end{align*}
	Applying \eqref{w1} from Lemma~\ref{w_neq} gives \eqref{psi_neq2}.
	
	Finally, for the gradient bound \eqref{psi_neq3}, applying the Gagliardo--Nirenberg inequality and distributing the Fourier weights so that the two resulting factors are controlled by \(\mathcal{D}_{\neq,3}\) and \(\mathcal{D}_{\neq,4}\), respectively, we obtain
	\begin{align*}
		\sum_{\ell\neq 0} |\ell|\,\|\partial_y\psi_\ell\|_{L^\infty}
		&\lesssim \sum_{\ell\neq 0} |\ell|^{\frac{5}{8}-m} \Big( |\ell|^{m+1/2} \|\partial_y\psi_\ell\|_{L^2} \Big)^{1/2} \left( |\ell|^{m+1/4}\|\omega_\ell\|_{L^2} \right)^{1/2} \\
		&\le \Bigg( \sum_{\ell\neq 0} |\ell|^{\frac{5}{4}-2m} \Bigg)^{1/2} \Bigg( \sum_{\ell\neq 0} |\ell|^{2m+1} \|\partial_y\psi_\ell\|_{L^2}^2 \Bigg)^{1/4} \Bigg( \sum_{\ell\neq 0} |\ell|^{2m+1/2} \|\omega_\ell\|_{L^2}^2 \Bigg)^{1/4}.
	\end{align*}
	Since $m > \frac{9}{8}$, the exponent $\frac{5}{4}-2m < -1$, ensuring that the sum $\sum |\ell|^{\frac{5}{4}-2m}$ converges to an absolute constant. The two remaining sums are bounded by \(e^{-2\delta_0\nu^{1/2}t} \mathcal{D}_{\neq,3}\) and \(\nu^{-1/2}e^{-2\delta_0\nu^{1/2}t} \mathcal{D}_{\neq,4}\), respectively. This proves \eqref{psi_neq3}.
\end{proof}
\begin{lemma}\label{mix}
The following weighted elliptic estimate controls $ y \psi_k$ in terms of $ y\omega_k$ and $ \psi_k$. More precisely, for every $k\neq0$, one has
	\begin{equation}\label{fact1}
		|k|^4 \|y\psi_k\|_{L^2}^2 + 2k^2\|y\partial_y\psi_k\|_{L^2}^2
		\le \|y\omega_k\|_{L^2}^2 + 2k^2\|\psi_k\|_{L^2}^2.
	\end{equation}
	Consequently, this implies the summation bound
	\begin{equation}\label{y_psi_infty}
		\sum_{k\neq 0} |k|^{2m+1} \|y\psi_k\|_{L^\infty}^2
		\lesssim e^{-2\delta_0\nu^{1/2}t}\mathcal{D}_{\neq,3} .
	\end{equation}
\end{lemma}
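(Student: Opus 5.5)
The plan is to prove \eqref{fact1} as an exact identity for the operator $\Delta_k=\partial_y^2-k^2$ and then drop a nonnegative term. Writing $\omega_k=\partial_y^2\psi_k-k^2\psi_k$ and expanding the square, I will compute
\begin{equation*}
\|y\omega_k\|_{L^2}^2=\|y\partial_y^2\psi_k\|_{L^2}^2+|k|^4\|y\psi_k\|_{L^2}^2-2k^2\operatorname{Re}\int_{\mathbb R}y^2\partial_y^2\psi_k\,\overline{\psi_k}\dd y .
\end{equation*}
For the cross term, one integration by parts gives
\begin{equation*}
-\operatorname{Re}\int y^2\partial_y^2\psi_k\overline{\psi_k}=\|y\partial_y\psi_k\|_{L^2}^2+\int y\,\partial_y|\psi_k|^2 .
\end{equation*}
A second integration by parts turns the last integral into $-\|\psi_k\|_{L^2}^2$. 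Hence
\begin{equation*}
\|y\omega_k\|_{L^2}^2=\|y\partial_y^2\psi_k\|_{L^2}^2+|k|^4\|y\psi_k\|_{L^2}^2+2k^2\|y\partial_y\psi_k\|_{L^2}^2-2k^2\|\psi_k\|_{L^2}^2 .
\end{equation*}
Discarding $\|y\partial_y^2\psi_k\|_{L^2}^2\ge0$ then yields \eqref{fact1}.

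The only delicate point is justifying that the boundary terms vanish. I will first argue for $\psi_k$ in a Schwartz-type dense class. I will then pass to the limit using that $\psi_k=\Delta_k^{-1}\omega_k$ with $y\omega_k\in L^2$ and $|k|\ge1$. The kernel of $\Delta_k^{-1}$ is $e^{-|k||y-y'|}/(2|k|)$, and its exponential decay makes $y\psi_k$, $y\partial_y\psi_k$ and $y\partial_y^2\psi_k$ square integrable. This is the step I expect to require the most care, though it is routine.

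For \eqref{y_psi_infty}, I will apply the one-dimensional Gagliardo--Nirenberg inequality to $y\psi_k$:
\begin{equation*}
\|y\psi_k\|_{L^\infty}^2\lesssim\|y\psi_k\|_{L^2}\big(\|\psi_k\|_{L^2}+\|y\partial_y\psi_k\|_{L^2}\big).
\end{equation*}
From \eqref{fact1} I will extract two bounds:
\begin{equation*}
\|y\psi_k\|_{L^2}\le|k|^{-2}\big(\|y\omega_k\|_{L^2}+\sqrt2|k|\|\psi_k\|_{L^2}\big),
\qquad
\|y\partial_y\psi_k\|_{L^2}\le|k|^{-1}\big(\|y\omega_k\|_{L^2}+\sqrt2|k|\|\psi_k\|_{L^2}\big).
\end{equation*}
Multiplying, the main contribution becomes
\begin{equation*}
|k|^{2m+1}\|y\psi_k\|_{L^2}\|y\partial_y\psi_k\|_{L^2}\lesssim|k|^{2m-2}\|y\omega_k\|_{L^2}^2+|k|^{2m}\|\psi_k\|_{L^2}^2 .
\end{equation*}
The remaining contribution is
\begin{equation*}
|k|^{2m+1}\|y\psi_k\|_{L^2}\|\psi_k\|_{L^2}\lesssim|k|^{2m-1}\|y\omega_k\|_{L^2}\|\psi_k\|_{L^2}+|k|^{2m}\|\psi_k\|_{L^2}^2 ,
\end{equation*}
which I will handle by Young's inequality.

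Finally, I will use $|k|\ge1$ and $|k|\,\|\psi_k\|_{L^2}\le\|\nabla_k\psi_k\|_{L^2}$. With these, every term is bounded by
\begin{equation*}
|k|^{2m}\mathcal D_{3,k}=|k|^{2m+1}\big(\|y\omega_k\|_{L^2}^2+\|\nabla_k\psi_k\|_{L^2}^2\big).
\end{equation*}
Summing over $k\neq0$ and recalling the weight $e^{2\delta_0\nu^{1/2}t}$ in the definition of $\mathcal D_{\neq,3}$ gives \eqref{y_psi_infty}.
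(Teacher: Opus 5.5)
Your proposal is correct and follows essentially the same route as the paper. It uses the exact identity $\|y\omega_k\|_{L^2}^2=\|y\partial_y^2\psi_k\|_{L^2}^2+|k|^4\|y\psi_k\|_{L^2}^2+2k^2\|y\partial_y\psi_k\|_{L^2}^2-2k^2\|\psi_k\|_{L^2}^2$ obtained by integration by parts, then Gagliardo--Nirenberg for $y\psi_k$ together with \eqref{fact1} to reduce $|k|^{2m+1}\|y\psi_k\|_{L^\infty}^2$ to $|k|^{2m-2}\|y\omega_k\|_{L^2}^2+|k|^{2m}\|\psi_k\|_{L^2}^2\lesssim|k|^{2m}\mathcal D_{3,k}$. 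The differences are cosmetic: you take square roots of \eqref{fact1} where the paper applies weighted Young first, and you add a justification of the vanishing boundary terms via the decaying kernel of $\Delta_k^{-1}$, which the paper leaves implicit.
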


\begin{proof}
	We first establish the modewise estimate \eqref{fact1}. Recalling the elliptic relation $$\omega_k=(\partial_y^2-k^2)\psi_k,$$ and integrating by parts, we obtain
\begin{align*}
\begin{aligned}
\|y\omega_k\|_{L^2}^2
&=
\int_{\mathbb{R}}
y^2
\left|
(\partial_y^2-k^2)\psi_k
\right|^2\,dy
\\
&=
\int_{\mathbb{R}}
y^2|\partial_y^2\psi_k|^2\,dy
-
2\operatorname{Re}
\int_{\mathbb{R}}
y^2k^2\partial_y^2\psi_k\,\overline{\psi_k}\,dy
+
\int_{\mathbb{R}}
y^2|k|^4|\psi_k|^2\,dy \\
&=\| y\partial_{y}^2 \psi_k\|^2_{L^2} + |k|^4\| y  \psi_k\|^2_{L^2} + 2 \|iky\partial_y\psi_k \|^2_{L^2}
-
2\| k \psi_k\|^2_{L^2},
\end{aligned}
\end{align*}
which proves \eqref{fact1}.
	
	We next use \eqref{fact1} to prove the \(L^\infty\) estimate \eqref{y_psi_infty}. By the one-dimensional Gagliardo--Nirenberg inequality and the product rule,
	\begin{equation*}
		\|y\psi_k\|_{L^\infty}^2 \lesssim \|y\psi_k\|_{L^2} \|\partial_y(y\psi_k)\|_{L^2} \le \|y\psi_k\|_{L^2} \big( \|\psi_k\|_{L^2} + \|y\partial_y\psi_k\|_{L^2} \big).
	\end{equation*}
	To preserve the optimal decay rates with respect to the frequency, we apply a weighted Young's inequality to get
	\begin{equation*}
		|k|^{2m+1} \|y\psi_k\|_{L^\infty}^2 \lesssim |k|^{2m+2} \|y\psi_k\|_{L^2}^2 + |k|^{2m} \|\psi_k\|_{L^2}^2 + |k|^{2m} \|y\partial_y\psi_k\|_{L^2}^2.
	\end{equation*}
	On the other hand, dividing \eqref{fact1} by $k^2$ implies that
$$|k|^2 \|y\psi_k\|_{L^2}^2 + \|y\partial_y\psi_k\|_{L^2}^2 \le |k|^{-2}\|y\omega_k\|_{L^2}^2 + 2\|\psi_k\|_{L^2}^2.$$
 Using this to bound the right-hand side of the previous inequality, and noting that $|k| \ge 1$, we find
	\begin{align*}
		\sum_{k\neq 0} |k|^{2m+1} \|y\psi_k\|_{L^\infty}^2
		&\lesssim \sum_{k\neq 0} \left( |k|^{2m-2} \|y\omega_k\|_{L^2}^2 + |k|^{2m} \|\psi_k\|_{L^2}^2 \right) \\
		&\lesssim \sum_{k\neq 0} |k|^{2m+1} \left( \|y\omega_k\|_{L^2}^2 + \|\nabla_k\psi_k\|_{L^2}^2 \right).
	\end{align*}
	Recalling the definition of $\mathcal{D}_{\neq,3}$, the final sum is bounded by $e^{-2\delta_0\nu^{1/2}t}\mathcal{D}_{\neq,3} $, which completes the proof.
\end{proof}

\section{Nonlinear Stability}\label{sec:nonlinear-stability}
\subsection{Zero-mode estimate}
\label{subsec:zero-mode-estimate}

We first derive the evolution equation for the streamwise average.
Since the projection $P_0$ commutes with $\partial_y$ and
$
P_0(\partial_x f)=0
$
for every sufficiently regular function $f$, taking the streamwise
average of \eqref{wholeNS} gives
\begin{align}\label{lingmo}
\partial_t\omega_0-\nu\partial_y^2\omega_0
=
-P_0\bigl(u\cdot\nabla\omega\bigr).
\end{align}
Since the zero mode has no enhanced dissipation, we estimate it through its interaction with the nonzero modes.

\begin{lemma}[Zero-mode energy estimate]\label{lem:zero_mode}
	The nonlinear forcing of the zero mode satisfies the following estimate:
	\begin{equation}\label{zero_mode_energy}
		\frac{1}{2}\frac{\mathrm{d}}{\mathrm{d}t}\mathcal{E}_0 + \mathcal{D}_0
		\lesssim \nu^{-7/12} e^{-2\delta_0\nu^{1/2}t} \mathcal{D}_0^{1/2}\mathcal{D}_{\neq,3}^{1/2}\mathcal{E}_{\neq}^{1/2}.
	\end{equation}
\end{lemma}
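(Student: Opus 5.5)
We test \eqref{lingmo} against $\omega_0$ and, separately, against $-\alpha\nu^{1/3}\partial_y^2\omega_0$, then add the two resulting identities. The linear terms give exactly $\frac12\frac{\mathrm d}{\mathrm dt}\mathcal E_0+\mathcal D_0$. On the right-hand side, the zero--zero interaction vanishes: $u_0=(-\partial_y\psi_0,0)$ and $\omega_0$ depends only on $y$, so $u_0\cdot\nabla\omega_0=0$. Hence
\[
P_0(u\cdot\nabla\omega)=P_0(u_{\neq}\cdot\nabla\omega_{\neq})=\partial_y P_0(u_{2,\neq}\,\omega_{\neq}),
\]
where the last equality uses incompressibility, and $u_{2,\neq}=\partial_x\psi_{\neq}$. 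Writing this term in divergence form is the key structural step, since it lets us move one $\partial_y$ onto the test function.

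For the $L^2$ part, one integration by parts gives
\[
\Big|\int P_0(u\cdot\nabla\omega)\,\omega_0\Big|=\Big|\int P_0(u_{2,\neq}\omega_{\neq})\,\partial_y\omega_0\Big|\le \|\partial_y\omega_0\|_{L^2}\,\|P_0(u_{2,\neq}\omega_{\neq})\|_{L^2_y}.
\]
By Parseval in $x$, $P_0(u_{2,\neq}\omega_{\neq})=\sum_{k\neq0} ik\psi_k\,\omega_{-k}$. We put $\psi_k$ in $L^\infty_y$ and $\omega_{-k}$ in $L^2_y$:
\[
\|P_0(u_{2,\neq}\omega_{\neq})\|_{L^2}\le\Big(\sum_k|k|^{2m+2}\|\psi_k\|_{L^\infty}^2\Big)^{1/2}\Big(\sum_k|k|^{-2m}\|\omega_k\|_{L^2}^2\Big)^{1/2}.
\]
By \eqref{psi_neq1}, the first factor is at most $e^{-\delta_0\nu^{1/2}t}\mathcal D_{\neq,3}^{1/2}$. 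Since $|k|\ge1$, the second is at most $e^{-\delta_0\nu^{1/2}t}\mathcal E_{\neq}^{1/2}$ by \eqref{Ekdengjia}. Using $\|\partial_y\omega_0\|_{L^2}\le\nu^{-1/2}\mathcal D_0^{1/2}$, this contribution is $\lesssim\nu^{-1/2}e^{-2\delta_0\nu^{1/2}t}\mathcal D_0^{1/2}\mathcal D_{\neq,3}^{1/2}\mathcal E_{\neq}^{1/2}$. This is better than the claimed $\nu^{-7/12}$ because $\nu<1$.

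For the $H^1$ part, we estimate
\[
\alpha\nu^{1/3}\Big|\int \partial_y P_0(u_{2,\neq}\omega_{\neq})\,\partial_y^2\omega_0\Big|\le \alpha^{1/2}\nu^{-1/3}\mathcal D_0^{1/2}\,\|\partial_yP_0(u_{2,\neq}\omega_{\neq})\|_{L^2},
\]
using $\alpha^{1/2}\nu^{2/3}\|\partial_y^2\omega_0\|_{L^2}\le\mathcal D_0^{1/2}$. Expanding the derivative gives
\[
\partial_yP_0(u_{2,\neq}\omega_{\neq})=\sum_{k\neq0} ik\,(\partial_y\psi_k\,\omega_{-k}+\psi_k\,\partial_y\omega_{-k}).
\]
The second piece is handled like the $L^2$ part, but with $\partial_y\omega_{-k}$ in place of $\omega_{-k}$. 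By \eqref{Ekdengjia}, $\|\partial_y\omega_k\|_{L^2}^2\lesssim\nu^{-1/2}|k|^{1/2}E_k$, which costs a factor $\nu^{-1/4}$ and gives the net power $\nu^{-1/3-1/4}=\nu^{-7/12}$. This is exactly the exponent in \eqref{zero_mode_energy}, which confirms that the bookkeeping matches. The spare $|k|^{1/4}$ is absorbed by shifting weights: we use $|k|^{2m+2}\|\psi_k\|_{L^\infty}^2$ from \eqref{psi_neq1} against $|k|^{-2m+1/2}\|\partial_y\omega_k\|_{L^2}^2\lesssim|k|^{2m}\nu^{-1/2}E_k$, which is valid for $m\ge 1/8$.

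The first piece, $\sum_k ik\,\partial_y\psi_k\,\omega_{-k}$, is the main obstacle, because we need an $L^\infty_y$ bound on $\partial_y\psi_k$ controlled by $\mathcal D_{\neq,3}$ alone, without picking up $\mathcal D_{\neq,4}$ as \eqref{psi_neq3} does. We will try the Gagliardo--Nirenberg bound $\|\partial_y\psi_k\|_{L^\infty}^2\lesssim\|\partial_y\psi_k\|_{L^2}\|\omega_k+k^2\psi_k\|_{L^2}$ and split it as
\[
|k|^{2m+2}\|\partial_y\psi_k\|_{L^\infty}^2\lesssim |k|^{m+3/2}\|\partial_y\psi_k\|_{L^2}\cdot|k|^{m+1/2}\big(\|\omega_k\|_{L^2}+|k|^2\|\psi_k\|_{L^2}\big).
\]
The $\psi_k$ part of this is bounded by $\mathcal D_{\neq,3}$. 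The $\omega_k$ part is bounded by $\mathcal D_{\neq,3}^{1/2}\big(\nu^{-1/2}\mathcal D_{\neq,4}\big)^{1/2}$, and that loses a $\mathcal D_{\neq,4}$ factor we do not want. The remedy is to pair it against $\omega_{-k}$ in $L^2$ with a $\nu^{1/4}$-weighted derivative, which again contributes at most $\nu^{-1/4}$ overall. If the frequency weights do not close this way, the fallback is to place $\omega_{-k}$ in $L^\infty$ via \eqref{w3}, which costs $\nu^{-1/8}$, and $\partial_y\psi_k$ in $L^2$, which is bounded by $\mathcal D_{\neq,3}$. That gives net power $\nu^{-1/3-1/8}$, still within $\nu^{-7/12}$.
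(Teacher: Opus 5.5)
Your argument is correct. Its skeleton is the paper's: the same two test functions, and the divergence form $P_0(u\cdot\nabla\omega)=\partial_y\sum_{k\neq0}ik\psi_k\omega_{-k}$ to move one derivative onto $\omega_0$. For the $L^2$ part, and for the $\psi_k\,\partial_y\omega_{-k}$ piece of the $H^1$ part, you also follow the paper: $\psi_k$ goes in $L^\infty_y$ via \eqref{psi_neq1}, paired with $\omega_{-k}$ or $\partial_y\omega_{-k}$ in $L^2$, and the latter costs the $\nu^{-1/4}$ that produces $\nu^{-7/12}$.

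The one place you genuinely differ is the piece $\sum_k ik\,\partial_y\psi_k\,\omega_{-k}$. The paper puts $\partial_y\psi_\ell$ in $L^\infty_y$ by Gagliardo--Nirenberg, $\|\partial_y\psi_\ell\|_{L^\infty}\lesssim\|\omega_\ell\|_{L^2}^{1/2}\|\partial_y\psi_\ell\|_{L^2}^{1/2}$. It then asserts that a weighted H\"older inequality yields $\nu^{-1/4}\mathcal D_{\neq,3}^{1/2}\mathcal E_{\neq}^{1/2}$. As you noticed yourself, that bound leaves $\|\partial_y\psi_\ell\|_{L^2}$ only to the power $1/2$, hence at most $\mathcal D_{\neq,3}^{1/4}$. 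The remaining $\|\omega\|_{L^2}^{3/2}$ is controlled by $\mathcal E_{\neq}$ or $\mathcal D_{\neq,4}$, but not by $\mathcal D_{\neq,3}$. So the stated structure does not follow from H\"older alone without an extra interpolation.

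Your fallback avoids this entirely:
$\sum_k|k|\|\partial_y\psi_k\|_{L^2}\|\omega_{-k}\|_{L^\infty}\le\sup_k\big(|k|^{m+1/2}\|\partial_y\psi_k\|_{L^2}\big)\sum_k\|\omega_k\|_{L^\infty}\lesssim\nu^{-1/8}e^{-2\delta_0\nu^{1/2}t}\mathcal D_{\neq,3}^{1/2}\mathcal E_{\neq}^{1/2}$,
using \eqref{w3} and $m\ge 1/2$. This piece therefore contributes $\nu^{-1/3-1/8}=\nu^{-11/24}\le\nu^{-7/12}$ and has exactly the claimed form. The only price is relying on the $\ell^1$ bound \eqref{w3}, which needs $m>5/8$ and is harmless here.

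Commit to the fallback as the proof of that piece and write out the weight allocation above. Then delete the Gagliardo--Nirenberg attempt and the $\nu^{1/4}$-pairing ``remedy'': as stated, that remedy is not substantiated and establishes nothing.
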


\begin{proof}
	Testing the zero-mode equation of \eqref{lingmo} against $\omega_0$ in $L^2$, and its $y$-derivative against $\alpha\nu^{1/3}\partial_y \omega_0$, we obtain the exact energy balance
	\begin{equation*}
		\frac{1}{2}\frac{\mathrm{d}}{\mathrm{d}t}\mathcal{E}_0 + \mathcal{D}_0 = \mathcal{N}_0,
	\end{equation*}
	where the nonlinear convective contribution is given by
	\begin{align*}
		\mathcal{N}_0
		&= -\operatorname{Re}\left\langle \omega_0, (u\cdot\nabla \omega)_0 \right\rangle
		- \alpha\nu^{1/3} \operatorname{Re} \left\langle \partial_y \omega_0, \partial_y (u\cdot\nabla \omega)_0 \right\rangle \\
		&=: \mathcal{N}_0^{(1)} + \mathcal{N}_0^{(2)}.
	\end{align*}

	For the first contribution, we expand the convective term and integrate by parts to shift the derivative onto $\omega_0$:
	\begin{equation*}
		\mathcal{N}_0^{(1)} = \operatorname{Re} \int_{\mathbb{R}} \omega_0\, \partial_y \bigg( \sum_{\ell\neq 0} \psi_\ell\, i\ell \omega_{-\ell} \bigg) \,\mathrm{d}y
		= -\operatorname{Re} \sum_{\ell\neq 0} \int_{\mathbb{R}} \partial_y \omega_0\, (i\ell \psi_\ell) \omega_{-\ell} \,\mathrm{d}y.
	\end{equation*}
	Applying the Cauchy--Schwarz inequality in $y$ and extracting the supremum of the velocity fluctuation gives
	\begin{equation*}
		\big|\mathcal{N}_0^{(1)}\big| \le \|\partial_y \omega_0\|_{L^2} \sum_{\ell\neq 0} \|\ell\psi_\ell\|_{L^\infty} \|\omega_{-\ell}\|_{L^2}.
	\end{equation*}
	Recalling that $\|\partial_y \omega_0\|_{L^2} \le \nu^{-1/2}\mathcal{D}_0^{1/2}$,  Lemma~\ref{w_neq} and \ref{psi_neq}  give
	\begin{equation}\label{NL0_1_bound}
		\big|\mathcal{N}_0^{(1)}\big| \lesssim \nu^{-1/2} e^{-2\delta_0 \nu^{1/2}t} \mathcal{D}_0^{1/2} \mathcal{D}_{\neq,3}^{1/2} \mathcal{E}_{\neq}^{1/2}.
	\end{equation}

	For the higher-order interaction $\mathcal{N}_0^{(2)}$, a single integration by parts yields
	\begin{equation*}
		\mathcal{N}_0^{(2)} = -\alpha\nu^{1/3} \operatorname{Re} \sum_{\ell\neq 0} \int_{\mathbb{R}} \partial_y^2 \omega_0 \Big( i\ell\partial_y\psi_\ell\,\omega_{-\ell} + i\ell\psi_\ell\,\partial_y \omega_{-\ell} \Big) \,\mathrm{d}y.
	\end{equation*}
	Applying the Gagliardo--Nirenberg interpolation inequality to the $L^\infty$ norms of the velocity fluctuations, we can bound this term by
	\begin{equation*}
		\big|\mathcal{N}_0^{(2)}\big| \lesssim \nu^{1/3} \|\partial_y^2\omega_0\|_{L^2} \sum_{\ell\neq 0} |\ell| \left( \|\omega_\ell\|_{L^2}^{1/2}\|\partial_y\psi_\ell\|_{L^2}^{1/2}\|\omega_{-\ell}\|_{L^2} + \|\partial_y\psi_\ell\|_{L^2}^{1/2}\|\psi_\ell\|_{L^2}^{1/2}\|\partial_y \omega_{-\ell}\|_{L^2} \right).
	\end{equation*}
	Note that $\nu^{1/3}\|\partial_y^2\omega_0\|_{L^2} \lesssim \nu^{-1/3}\mathcal{D}_0^{1/2}$, applying the weighted H\"older inequality with the frequency weights appearing in \(\mathcal{E}_{\neq}\)  and \(\mathcal{D}_{\neq,3}\), we obtain
	\begin{equation*}
		\big|\mathcal{N}_0^{(2)}\big| \lesssim \nu^{-1/3}\mathcal{D}_0^{1/2} \times \nu^{-1/4} e^{-2\delta_0\nu^{1/2}t} \mathcal{D}_{\neq,3}^{1/2}\mathcal{E}_{\neq}^{1/2}
		= \nu^{-7/12} e^{-2\delta_0\nu^{1/2}t} \mathcal{D}_0^{1/2} \mathcal{D}_{\neq,3}^{1/2}\mathcal{E}_{\neq}^{1/2}.
	\end{equation*}

	Since \(0<\nu<1\) and \(7/12>1/2\), we have $ \nu^{-7/12} \ge \nu^{-1/2}$. Summing the two nonlinear contributions yields the desired estimate \eqref{zero_mode_energy}.
\end{proof}

\subsection{Nonzero-mode estimate}
We next estimate the nonzero modes. We use the modewise hypocoercive energy \(E_k\) together with the exponential time weight in \eqref{def_Eneq}.

For each $k \neq 0$, the evolution of the perturbed vorticity is governed by
\begin{equation*}
	\partial_t \omega_k -\mathcal{L}_k + \mathcal{N}_k=0,
\end{equation*}
where the linear generator (capturing transport, nonlocal coupling, and diffusion) is
\begin{equation*}
	\mathcal{L}_k = -iky^2\omega_k + 2ik\psi_k + \nu\Delta_k \omega_k,
\end{equation*}
and the nonlinear convective forcing takes the convolution form
\begin{equation*}
	\mathcal{N}_k =(u\cdot\nabla \omega)_k = -\sum_{\ell\in\mathbb{Z}} \partial_y\psi_\ell\, i(k-\ell)\omega_{k-\ell} + \sum_{\ell\in\mathbb{Z}} i\ell\psi_\ell\,\partial_y \omega_{k-\ell}.
\end{equation*}

Recalling the definition of the exponentially weighted energy $\mathcal{E}_{\neq}$ in \eqref{def_Eneq}, its time derivative can be  expressed by
\begin{equation}\label{E_neq}
	\frac{\mathrm{d}}{\mathrm{d}t}\mathcal{E}_{\neq} = \mathcal{L}_{\neq} + \mathcal{N}_{\neq} + 2\delta_0\nu^{1/2}\mathcal{E}_{\neq},
\end{equation}
where the total linear and nonlinear contributions are given respectively by
\begin{align*}
\mathcal{L}_{\neq}
&=
2e^{2\delta_0\nu^{1/2}t}
\sum_{k\neq 0}
|k|^{2m}
\operatorname{Re}
\left\langle
\omega_k,\mathcal{L}_k
\right\rangle
+
2\alpha\nu^{1/2}
e^{2\delta_0\nu^{1/2}t}
\sum_{k\neq 0}
|k|^{2m-\frac12}
\operatorname{Re}
\left\langle
\nabla_k \omega_k,\nabla_k \mathcal{L}_k
\right\rangle
\nn\\
&\quad
+
\beta e^{2\delta_0\nu^{1/2}t}
\sum_{k\neq 0}
|k|^{2m-1}
\left(
\operatorname{Re}
\left\langle
\partial_y \mathcal{L}_k,iky\omega_k
\right\rangle
+
\operatorname{Re}
\left\langle
iky\mathcal{L}_k,\partial_y \omega_k
\right\rangle
\right)
\nn\\
&\quad
+
2\gamma\nu^{-1/2}
e^{2\delta_0\nu^{1/2}t}
\sum_{k\neq 0}
|k|^{2m-\frac32}
(\operatorname{Re}
\left\langle
iky\omega_k,iky\mathcal{L}_k
\right\rangle-\operatorname{Re}
\left\langle
ik \mathcal{L}_k,ik\psi_k
\right\rangle),
\end{align*}
and
\begin{align*}
\begin{aligned}
\mathcal{N}_{\neq}
&=
2e^{2\delta_0\nu^{1/2}t}
\sum_{k\neq 0}
|k|^{2m}
\operatorname{Re}
\left\langle
\omega_k,\mathcal{N}_k
\right\rangle
+
2\alpha\nu^{1/2}
e^{2\delta_0\nu^{1/2}t}
\sum_{k\neq 0}
|k|^{2m-\frac12}
\operatorname{Re}
\left\langle
\nabla_k \omega_k,\nabla_k \mathcal{N}_k
\right\rangle
\\
&\quad
+
\beta e^{2\delta_0\nu^{1/2}t}
\sum_{k\neq 0}
|k|^{2m-1}
\left(
\operatorname{Re}
\left\langle
\partial_y \mathcal{N}_k,iky\omega_k
\right\rangle
+
\operatorname{Re}
\left\langle
iky\mathcal{N}_k,\partial_y \omega_k
\right\rangle
\right)
\\
&\quad
+
2\gamma\nu^{-1/2}
e^{2\delta_0\nu^{1/2}t}
\sum_{k\neq 0}
|k|^{2m-\frac32}
\operatorname{Re}
\left\langle
iky\omega_k,iky\mathcal{N}_k
\right\rangle\nn\\
&\quad-
2\gamma\nu^{-1/2}
e^{2\delta_0\nu^{1/2}t}
\sum_{k\neq 0}
|k|^{2m-\frac32}
\operatorname{Re}
\left\langle
ik\mathcal{N}_k,ik\psi_k
\right\rangle
\\
&=:
\sum_{j=1}^5 \mathcal{N}_{\neq}^{(j)}.
\end{aligned}
\end{align*}

By the equivalence \eqref{Ekdengjia} and the definitions of $ \mathcal{D}_{\neq}$, we have
\begin{align*}
&\quad\nu^{1/2}
\sum_{k\neq 0}
|k|^{2m}|k|^{1/2}
E_k[\omega_k]\\
&\lesssim \nu^{1/2}
\sum_{k\neq 0}
|k|^{2m}|k|^{1/2} \left(\|\omega_k\|_{L^2}^2 + \nu^{1/2} |k|^{-1/2} \|\nabla_k \omega_k\|_{L^2}^2 + \nu^{-1/2} |k|^{1/2} \left[\|y\omega_k\|_{L^2}^2
+
2\|\nabla_k\psi_k\|_{L^2}^2 \right]\right)\\
&\lesssim e^{-2\delta_0\nu^{1/2}t}\mathcal{D}_{\neq}.
\end{align*}
Combining the preceding estimate with the linear coercivity estimate \eqref{Ek_dissipation_coercive}, and choosing \(\delta_0>0\) sufficiently small, we obtain
\begin{align*}
\mathcal{L}_{\neq}
\le
-8\delta_0 \mathcal{D}_{\neq}
-
8\delta_0\nu^{1/2}
e^{2\delta_0\nu^{1/2}t}
\sum_{k\neq 0}
|k|^{2m+\frac12}
E_k[\omega_k].
\end{align*}
Substituting the preceding linear estimate into \eqref{E_neq}, it remains to estimate $\mathcal{N}_{\neq}^{(1)},\ldots,\mathcal{N}_{\neq}^{(5)}$. Their proofs are deferred to Section \ref{sec:proof-nonlinear-lemmas}.

\medskip
We begin with the estimate of $\mathcal{N}_{\neq}^{(1)}$.
\begin{lemma}[Estimate of $\mathcal{N}_{\neq}^{(1)}$]\label{le_N1}
	The contribution of \(\mathcal{N}_k\) to the \(L^2\)-part of \(E_k\) satisfies
	\begin{equation}\label{N1}
		\bigl|\mathcal{N}_{\neq}^{(1)}\bigr| \lesssim \nu^{-2/3} \mathcal{E}^{1/2} \Big(
		\mathcal{D}_{\neq,4}^{1/2} \mathcal{D}_{\neq,3}^{1/2} +
		\mathcal{D}_{\neq,1}^{1/2} \mathcal{D}_{\neq,3}^{1/2} +
		\mathcal{D}_{\neq,1}^{1/3} \mathcal{D}_{\neq,4}^{2/3} +
		\mathcal{D}_{\neq,1}^{1/4} \mathcal{D}_{\neq,3}^{1/4} \mathcal{D}_{\neq,4}^{1/2} \Big).
	\end{equation}
\end{lemma}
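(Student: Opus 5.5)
We write $\mathcal{N}_{\neq}^{(1)}=2e^{2\delta_0\nu^{1/2}t}\sum_{k\neq0}|k|^{2m}\operatorname{Re}\langle\omega_k,\mathcal{N}_k\rangle$. We then split the convolution in $\mathcal{N}_k$ according to whether the interacting index is zero or not. This gives four pieces: (i) $\ell=0$ in the first sum, i.e.\ $-\partial_y\psi_0\,ik\omega_k$; (ii) $k-\ell=0$ in the second sum, i.e.\ $ik\psi_k\,\partial_y\omega_0$; and (iii)--(iv) the two sums over $\ell\neq0$, $k-\ell\neq0$ (nonzero--nonzero interactions).

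Term (i) vanishes identically. Indeed, $\operatorname{Re}\langle\omega_k,-ik\partial_y\psi_0\,\omega_k\rangle=\operatorname{Re}(-ik)\int\partial_y\psi_0|\omega_k|^2\,dy=0$, since $\partial_y\psi_0$ is real; this is transport by the mean flow.

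For term (ii) we use $|\langle\omega_k,ik\psi_k\partial_y\omega_0\rangle|\le|k|\,\|\psi_k\|_{L^\infty}\|\omega_k\|_{L^2}\|\partial_y\omega_0\|_{L^2}$ and distribute the weights. The factor $|k|^{m+1}\|\psi_k\|_{L^\infty}$ is controlled by \eqref{psi_neq1} through $\mathcal{D}_{\neq,3}^{1/2}$, and $|k|^{m}\|\omega_k\|_{L^2}$ is controlled by $|k|^{m+1/4}\|\omega_k\|_{L^2}$, hence by $\nu^{-1/4}\mathcal{D}_{\neq,4}^{1/2}$. Finally $\|\partial_y\omega_0\|_{L^2}\lesssim\nu^{-1/6}\mathcal{E}_0^{1/2}$. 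This gives $\nu^{-5/12}\mathcal{E}^{1/2}\mathcal{D}_{\neq,3}^{1/2}\mathcal{D}_{\neq,4}^{1/2}$, which is better than the $\nu^{-2/3}$ claimed. (Alternatively one can put $\partial_y$ on the nonzero modes by parts and use $\|\omega_0\|_{L^\infty}\lesssim\nu^{-1/12}\mathcal{E}_0^{1/2}$ from Lemma~\ref{psi_0}.)

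For the nonzero--nonzero terms, write $k=\ell+(k-\ell)$ and split the frequency sums into the regions $|\ell|\le|k-\ell|$ and $|\ell|>|k-\ell|$, so that the weight $|k|^{m}\lesssim|\ell|^m+|k-\ell|^m$ lands on the larger frequency.

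For the term $\partial_y\psi_\ell\, i(k-\ell)\omega_{k-\ell}$, put $\partial_y\psi_\ell$ in $L^\infty$.
\begin{itemize}
\item When $|k-\ell|\ge|\ell|$, the weights fall on $\omega_{k-\ell}$. The sum is bounded by $\sum_\ell|\ell|\|\partial_y\psi_\ell\|_{L^\infty}$ times a Young/Cauchy--Schwarz convolution of $|k|^m\|\omega_k\|$ with $|k-\ell|^{m}\|\omega_{k-\ell}\|$. Using \eqref{psi_neq3} for the first factor gives $\nu^{-1/8}\mathcal{D}_{\neq,3}^{1/4}\mathcal{D}_{\neq,4}^{1/4}$. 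The remaining $\omega$-weights, carrying one extra factor $|k-\ell|/|\ell|$, are handled with \eqref{w1}, which costs $\nu^{-1/2}$ or less. Altogether this produces the $\mathcal{D}_{\neq,1}^{1/4}\mathcal{D}_{\neq,3}^{1/4}\mathcal{D}_{\neq,4}^{1/2}$ and $\mathcal{D}_{\neq,1}^{1/3}\mathcal{D}_{\neq,4}^{2/3}$ structures, with one $\mathcal{E}^{1/2}$ kept on the lowest-weighted factor.
\item When $|\ell|>|k-\ell|$, the derivative weights move onto $\psi_\ell$. Here we use $\|\partial_y\psi_\ell\|_{L^2}$ through $\mathcal{D}_{\neq,3}$ and put $\omega_{k-\ell}$ in $L^\infty$ via \eqref{w3} and the energy.
\end{itemize}

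The term $i\ell\psi_\ell\,\partial_y\omega_{k-\ell}$ is treated the same way. We take $\|\ell\psi_\ell\|_{L^\infty}$ via \eqref{psi_neq1}--\eqref{psi_neq2}, and $\partial_y\omega_{k-\ell}$ in $L^2$ via $\mathcal{D}_{\neq,1}$ at cost $\nu^{-1/2}$. This yields $\mathcal{D}_{\neq,1}^{1/2}\mathcal{D}_{\neq,3}^{1/2}$ times $\mathcal{E}^{1/2}$, after Cauchy--Schwarz in $k$ using the summability for $m>9/8$.

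The main obstacle is the frequency bookkeeping in (iii)--(iv). Every power of $|k|$ must be absorbed so that the resulting $\nu$-exponent is at most $-2/3$ and the exponential weights $e^{2\delta_0\nu^{1/2}t}$ pair off correctly: two dissipation-type factors each carry $e^{-\delta_0\nu^{1/2}t}$, and the $\mathcal{E}^{1/2}$ factor carries the other. The worst case is the term where $\partial_y\omega$ sits at high frequency against $\psi$ at low frequency. There we would first try \eqref{psi_neq2} combined with \eqref{w2}, whose exponent budget $\nu^{-1/3}\cdot\nu^{-1/3}$ matches $\nu^{-2/3}$ exactly; if the bookkeeping fails, we would fall back on trading $|k|^{1/2}$ between the $\mathcal{D}_{\neq,1}$ and $\mathcal{D}_{\neq,4}$ interpolations.
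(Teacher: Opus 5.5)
You handle the mean--fluctuation term (i) and the fluctuation--mean term (ii) exactly as the paper does. For the fluctuation--fluctuation part you take a different route. The paper first rewrites $-i(k-\ell)\partial_y\psi_\ell\,\omega_{k-\ell}+i\ell\psi_\ell\,\partial_y\omega_{k-\ell}=i\ell\,\partial_y(\psi_\ell\omega_{k-\ell})-ik\,\partial_y\psi_\ell\,\omega_{k-\ell}$, integrates the total derivative by parts onto $\omega_k$, and splits frequencies only for the remaining piece of weight $|k|^{2m+1}$. You instead keep (iii) and (iv) separate. That route is viable, and your (iv) is fine: $|k|^m\lesssim|\ell|^m+|k-\ell|^m$, \eqref{psi_neq1} and Young's convolution inequality give $\nu^{-1/2}\mathcal{E}^{1/2}\mathcal{D}_{\neq,1}^{1/2}\mathcal{D}_{\neq,3}^{1/2}$ in every frequency configuration. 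So the ``worst case'' of your last paragraph is not actually hard. Its fallback is also mis-budgeted, since \eqref{w2} carries $\nu^{-5/6}$, not $\nu^{-1/3}$.

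The step that fails as written is the region $|k-\ell|\ge|\ell|$ of (iii). There you spend \eqref{psi_neq3} on $\sum_\ell|\ell|\,\|\partial_y\psi_\ell\|_{L^\infty}$ and then \eqref{w1} on both $\omega$-factors, claiming \eqref{w1} costs at most $\nu^{-1/2}$. It actually costs $\nu^{-2/3}$, and the product is $\nu^{-19/24}\mathcal{D}_{\neq,1}^{1/3}\mathcal{D}_{\neq,3}^{1/4}\mathcal{D}_{\neq,4}^{11/12}$. This is worse than $\nu^{-2/3}$, has dissipation degree $3/2$, and leaves no $\mathcal{E}^{1/2}$; keeping $\mathcal{E}^{1/2}$ on one $\omega$-factor is incompatible with applying \eqref{w1} to both. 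Such a term cannot be closed from $\sup_t\mathcal{E}$ and $\int\mathcal{D}\,\mathrm{d}t$.

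The weight $|\ell|$ you put on the velocity is simply wasted in that region. Since $|k|\le2|k-\ell|$, you have $|k|^{2m}|k-\ell|\lesssim|k|^{m+1/2}|k-\ell|^{m+1/2}$, so both $\omega$'s go into \eqref{w1} and the velocity should carry the energy instead. From $\|\partial_y\psi_\ell\|_{L^\infty}\lesssim|\ell|^{-1/2}\|\omega_\ell\|_{L^2}$ you get $\sum_{\ell\neq0}\|\partial_y\psi_\ell\|_{L^\infty}\lesssim e^{-\delta_0\nu^{1/2}t}\mathcal{E}_{\neq}^{1/2}$. This yields exactly $\nu^{-2/3}\mathcal{E}^{1/2}\mathcal{D}_{\neq,1}^{1/3}\mathcal{D}_{\neq,4}^{2/3}$, matching the paper's $\Omega_{\mathrm{HL}}$ bound. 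Keeping \eqref{psi_neq3}, with $\mathcal{E}^{1/2}$ on $|k|^m\omega_k$ and $\mathcal{D}_{\neq,1}$ on $|k-\ell|^{m+1}\omega_{k-\ell}$, also closes at $\nu^{-5/8}$. However, it produces $\mathcal{D}_{\neq,1}^{1/2}\mathcal{D}_{\neq,3}^{1/4}\mathcal{D}_{\neq,4}^{1/4}$, which is not dominated by the products listed in \eqref{N1}.

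In the complementary region $|\ell|>|k-\ell|$ you only have $|k|<2|\ell|$, not $|k|\simeq|\ell|$. Do not move all the weights onto $\psi_\ell$, because $\mathcal{D}_{\neq,3}$ controls only $|\ell|^{m+1/2}\|\partial_y\psi_\ell\|_{L^2}$. Instead use $|k|^{2m}|k-\ell|\lesssim|k|^{m+1/4}|\ell|^{m+1/2}|k-\ell|^{1/4}$ together with the weighted version of \eqref{w3}, $\sum_q|q|^{1/4}\|\omega_q\|_{L^\infty}\lesssim\nu^{-1/8}e^{-\delta_0\nu^{1/2}t}\mathcal{E}_{\neq}^{1/2}$ (valid for $m>7/8$). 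This gives $\nu^{-3/8}\mathcal{E}^{1/2}\mathcal{D}_{\neq,3}^{1/2}\mathcal{D}_{\neq,4}^{1/2}$. With these two corrections your decomposition reproduces \eqref{N1}.
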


We next estimate $\mathcal{N}_{\neq}^{(2)}$.
\begin{lemma}[Estimate of $\mathcal{N}_{\neq}^{(2)}$]\label{le_N2}
The nonlinear term associated with the $\nabla_k\omega_k$ component of
the hypocoercive energy satisfies
\begin{equation}\label{N2}
\begin{aligned}
\bigl|\mathcal{N}_{\neq}^{(2)}\bigr|
\lesssim \nu^{-2/3}\mathcal{E}^{1/2}\Bigl(
&\mathcal{D}_{\neq,1}^{2/3}\mathcal{D}_{\neq,4}^{1/3}
+\mathcal{D}_{\neq,2}^{1/2}\mathcal{D}_{\neq,3}^{1/2}
+\mathcal{D}_{\neq,2}^{1/2}\mathcal{D}_{\neq,4}^{1/2}
+\mathcal{D}_{\neq,2}^{1/2}
 \mathcal{D}_{\neq,1}^{1/3}
 \mathcal{D}_{\neq,4}^{1/6}
\Bigr).
\end{aligned}
\end{equation}
\end{lemma}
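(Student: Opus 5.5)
Since \(\mathcal{N}_{\neq}^{(2)}=2\alpha\nu^{1/2}e^{2\delta_0\nu^{1/2}t}\sum_{k\neq0}|k|^{2m-\frac12}\operatorname{Re}\langle\nabla_k\omega_k,\nabla_k\mathcal{N}_k\rangle\), the plan is to integrate by parts once. This moves one derivative from the nonlinearity onto \(\omega_k\), so that
\[
\bigl|\mathcal{N}_{\neq}^{(2)}\bigr|\lesssim \nu^{1/2}e^{2\delta_0\nu^{1/2}t}\sum_{k\neq0}|k|^{2m-\frac12}\,\|\Delta_k\omega_k\|_{L^2}\,\|\mathcal{N}_k\|_{L^2}.
\]
This is the natural pairing because \(\mathcal{D}_{\neq,2}\) controls \(\nu^{3/2}|k|^{-1/2}\|\Delta_k\omega_k\|_{L^2}^2\). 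After Cauchy--Schwarz in \(k\), the factor \(\Delta_k\omega_k\) contributes \(\nu^{-3/4}e^{-\delta_0\nu^{1/2}t}\mathcal{D}_{\neq,2}^{1/2}\). It then remains to bound \(\nu^{-1/4}\bigl(\sum_k|k|^{2m-\frac12}\|\mathcal{N}_k\|_{L^2}^2\bigr)^{1/2}\) by \(\nu^{-2/3+1/4}\mathcal{E}^{1/2}\) times a square root of a dissipation quantity, or a product of such quantities; the total power of \(\nu\) must come out to \(-2/3\).

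The convolution \(\mathcal{N}_k=\sum_\ell\bigl(-\partial_y\psi_\ell\, i(k-\ell)\omega_{k-\ell}+i\ell\psi_\ell\,\partial_y\omega_{k-\ell}\bigr)\) will be split according to \(\ell=0\) and \(\ell\neq0\), and then \(k-\ell=0\) versus \(k-\ell\neq0\).

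\emph{Zero-mode velocity \((\ell=0)\).} Only the term \(\partial_y\psi_0\, ik\omega_k\) occurs. I would bound \(\partial_y\psi_0=-u_{1,0}\) in \(L^\infty\) by Lemma~\ref{psi_0}, which is controlled by \(\mathcal{E}_0^{1/2}\) (with the \(u_{\mathrm{in},0}\) control propagated). This is paired with \(|k|^{m+\frac34}\|\omega_k\|_{L^2}\), which \eqref{w2} controls by \(\mathcal{D}_{\neq,1}^{1/3}\mathcal{D}_{\neq,4}^{1/6}\). This should give the term \(\mathcal{D}_{\neq,2}^{1/2}\mathcal{D}_{\neq,1}^{1/3}\mathcal{D}_{\neq,4}^{1/6}\).

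\emph{Zero-mode vorticity \((k-\ell=0)\).} The term is \(ik\psi_k\,\partial_y\omega_0\). I would put \(\partial_y\omega_0\) in \(L^2\), costing \(\nu^{-1/6}\mathcal{E}_0^{1/2}\), and \(k\psi_k\) in \(L^\infty\) via \eqref{psi_neq1}, which gives \(\mathcal{D}_{\neq,3}^{1/2}\). This produces the term \(\mathcal{D}_{\neq,2}^{1/2}\mathcal{D}_{\neq,3}^{1/2}\).

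\emph{Nonzero--nonzero interactions.} Here I would use a frequency partition, \(|\ell|\le|k-\ell|\) versus \(|\ell|>|k-\ell|\), so that the weight \(|k|^{m-\frac14}\) lands on the higher-frequency factor. The low-frequency factor is then placed in \(L^\infty\), summing with \eqref{w3}, \eqref{psi_neq1} or \eqref{psi_neq3}, and the high-frequency factor in \(L^2\), bounded by \(\mathcal{E}_{\neq}^{1/2}\) or by \(\mathcal{D}_{\neq,3},\mathcal{D}_{\neq,4}\). This is a Young convolution inequality of the form \(\ell^1*\ell^2\). These interactions should produce \(\mathcal{D}_{\neq,2}^{1/2}\mathcal{D}_{\neq,4}^{1/2}\) and further \(\mathcal{D}_{\neq,2}^{1/2}\mathcal{D}_{\neq,3}^{1/2}\) contributions. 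Whenever an \(L^\infty\) bound produces an extra \(e^{-\delta_0\nu^{1/2}t}\), it is absorbed, since the exponential weights exactly match.

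\emph{The term \(\mathcal{D}_{\neq,1}^{2/3}\mathcal{D}_{\neq,4}^{1/3}\).} For this term I would not integrate by parts. Instead, the \(|k|^2\) part of \(\nabla_k\) is kept as \(\langle ik\omega_k, ik\mathcal{N}_k\rangle\), and weights are distributed so that \eqref{w2} applies directly to \(\sum|k|^{2m+\frac32}\|\omega_k\|^2\), with a velocity \(L^\infty\) factor carrying \(\mathcal{E}^{1/2}\).

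The main obstacle will be the \(\nu\)-bookkeeping: every term must come out at exactly \(\nu^{-2/3}\). This matters especially for the transport term \(i\ell\psi_\ell\,\partial_y\omega_{k-\ell}\) with high \(|k-\ell|\). There \(\partial_y\omega\) in \(L^2\) costs \(\nu^{-1/2}\mathcal{D}_{\neq,1}^{1/2}\), and combined with \(\nu^{-1/4}\) and the \(\mathcal{D}_{\neq,2}\) factor this may overshoot. If the exponent misses, my first fallback is to exploit the commutator structure \(\langle\nabla_k\omega_k,\psi\,\partial\nabla\omega\rangle\), since transport by a divergence-free field is antisymmetric. That reduces the worst derivative count to a commutator involving \(\nabla\psi\), which is controlled by \eqref{psi_neq3} and by the interpolation in \eqref{w1}.
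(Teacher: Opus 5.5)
There is a genuine gap in your mean--fluctuation step ($\ell=0$). You integrate by parts and immediately pass to $\nu^{1/2}\sum_k|k|^{2m-\frac12}\|\Delta_k\omega_k\|_{L^2}\|\mathcal{N}_k\|_{L^2}$. As a result, the $\ell=0$ piece $\partial_y\psi_0\,ik\omega_k$ forces you to bound $\|\partial_y\psi_0\|_{L^\infty}=\|u_0^1\|_{L^\infty}$ by $\mathcal{E}_0^{1/2}$, and that bound is not available. Lemma~\ref{psi_0} only gives $\|\partial_y\psi_0\|_{L^\infty}\lesssim\|\partial_y\psi_0\|_{L^2}^{1/2}\|\omega_0\|_{L^2}^{1/2}$. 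Moreover, $\|\partial_y\psi_0\|_{L^2}=\|u_0^1\|_{L^2}$ is not part of $\mathcal{E}$, and it is not controlled by $\|\omega_0\|_{L^2}$ or $\|\partial_y\omega_0\|_{L^2}$ on $\mathbb{R}$, since there is no low-frequency control of $\partial_y^{-1}\omega_0$. ``Propagating'' $u_{\mathrm{in},0}$ requires the separate argument through the $u_0^1$ equation \eqref{eq_u0}. The paper uses that argument only for $\mathcal{T}_5$ in Lemma~\ref{le_N3}, and it brings in an extra time-integral factor. With it you would not obtain \eqref{N2}, whose right-hand side contains only $\mathcal{E}^{1/2}$ and the $\mathcal{D}_{\neq,j}$.

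The missing idea is a cancellation, so do not integrate by parts in this term. Since $\partial_y\psi_0$ is real-valued, $\operatorname{Re}\langle\nabla_k\omega_k,\partial_y\psi_0\,ik\nabla_k\omega_k\rangle=0$. The only surviving piece is then $-2\alpha\nu^{1/2}e^{2\delta_0\nu^{1/2}t}\sum_{k\neq0}|k|^{2m-\frac12}\operatorname{Re}\langle\partial_y\omega_k,\partial_y^2\psi_0\,ik\omega_k\rangle$, which involves only $\partial_y^2\psi_0=\omega_0$. Use $\|\omega_0\|_{L^\infty}\lesssim\nu^{-1/12}\mathcal{E}_0^{1/2}$ from \eqref{zero_vorticity_Linfty} and split $|k|^{2m+\frac12}=|k|^m(|k|^{m+1})^{1/3}(|k|^{m+\frac14})^{2/3}$; this gives $\lesssim\nu^{-1/2}\mathcal{E}_0^{1/2}\mathcal{D}_{\neq,1}^{2/3}\mathcal{D}_{\neq,4}^{1/3}$. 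That is the actual source of the first term in \eqref{N2}. Your last paragraph cannot produce it from $\ell=0$, because the $|k|^2$ part of that interaction is purely imaginary. The term $\mathcal{D}_{\neq,2}^{1/2}\mathcal{D}_{\neq,1}^{1/3}\mathcal{D}_{\neq,4}^{1/6}$, which you attributed to $\ell=0$, comes instead from the stretching part $\partial_y\psi_\ell\,i(k-\ell)\omega_{k-\ell}$ with $\ell\neq0$ and $|k-\ell|\gtrsim|\ell|$. There the velocity is a nonzero mode, so $\sum_{\ell\neq0}\|\partial_y\psi_\ell\|_{L^\infty}\lesssim e^{-\delta_0\nu^{1/2}t}\mathcal{E}_{\neq}^{1/2}$ is legitimately available and combines with \eqref{w2}. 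The rest of your plan, namely the $\ell=k$ term and the remaining fluctuation--fluctuation pieces paired with $\Delta_k\omega_k$, matches the paper. In particular, the transport term with high $|k-\ell|$ closes at $\nu^{-1/2}$ without any commutator argument. Put $|k-\ell|^{m-\frac14}\|\partial_y\omega_{k-\ell}\|_{L^2}$ into $\nu^{-1/4}\mathcal{E}_{\neq}^{1/2}$ rather than into $\mathcal{D}_{\neq,1}$, and put $\sum_{\ell}|\ell|\|\psi_\ell\|_{L^\infty}$ into $\mathcal{D}_{\neq,3}^{1/2}$.
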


The mixed term requires a time-integrated estimate of the zero-mode velocity. Its bound is stated separately in the following lemma.
\begin{lemma}[Estimate of $\mathcal{N}_{\neq}^{(3)}$]\label{le_N3}
The nonlinear contribution associated with the mixed spatial weights satisfies
	\begin{align}\label{N3}
		\bigl|\mathcal{N}_{\neq}^{(3)}\bigr|   &\lesssim \nu^{-2/3}
		\mathcal{E}^{1/2}
		\mathcal{D}_{\neq} + 2\delta_0
		\left(
		\mathcal{D}_{\neq,1}+\mathcal{D}_{\neq,4}
		\right)
		\nonumber\\
		&\quad +\nu^{-2/3-1/4}
		\mathcal{E}^{1/2}
		\mathcal{D}_{\neq}
		\left(
		\int_0^t
		\left(
		\sum_{k\neq 0}
		|k|^{2m}\|\nabla_k\psi_k\|_{L^2}^2
		\right)^{1/2}
		\left(
		\sum_{k\neq 0}
		|k|^{2m}\|\omega_k\|_{L^2}^2
		\right)^{1/2}
		d\tau
		\right)^{1/4}.
	\end{align}

\end{lemma}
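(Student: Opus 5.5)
The estimate for $\mathcal{N}_{\neq}^{(3)}$ comes from expanding the two mixed pairings and regrouping by the structure of $\mathcal{N}_k$. First, integrate by parts in $y$ in $\operatorname{Re}\langle \partial_y\mathcal{N}_k, iky\omega_k\rangle$, moving $\partial_y$ onto $iky\omega_k$. This produces $-\operatorname{Re}\langle \mathcal{N}_k, ik\omega_k\rangle - \operatorname{Re}\langle \mathcal{N}_k, iky\partial_y\omega_k\rangle$, which can be combined with the companion term $\operatorname{Re}\langle iky\mathcal{N}_k,\partial_y\omega_k\rangle$. Thus $\mathcal{N}_{\neq}^{(3)}$ reduces to sums over $k$ of $|k|^{2m-1}$ times pairings of $\mathcal{N}_k$ against $k\omega_k$ and $ky\partial_y\omega_k$. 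Since $B=\beta|k|^{-1}$, the first type is of lower order and behaves like $\mathcal{N}_{\neq}^{(1)}$. Lemma~\ref{le_N1}'s argument then gives $\nu^{-2/3}\mathcal{E}^{1/2}\mathcal{D}_{\neq}$. The second type is the genuinely new term, since it carries the weight $y\partial_y\omega_k$, which is controlled only through $\mathcal{D}_{\neq,5}$ (with factor $\nu^{1/2}|k|^{1/2}$) and through $\|y\omega_k\|$ in $\mathcal{D}_{\neq,3}$.

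Next, split $\mathcal{N}_k$ by the mean--fluctuation decomposition $u = u_0 + u_{\neq}$ and $\omega=\omega_0+\omega_{\neq}$, giving three pieces:
\begin{enumerate}
\item zero-mode velocity acting on nonzero vorticity, $-\partial_y\psi_0\, ik\omega_k$;
\item nonzero velocity acting on the zero-mode vorticity gradient, $ik\psi_k\partial_y\omega_0$;
\item nonzero--nonzero interactions.
\end{enumerate}
For (iii), put the $y$-weight on whichever factor is favorable. Either bound $y\psi_\ell$ in $L^\infty$ via \eqref{y_psi_infty}, pairing $\partial_y\omega_{k-\ell}$ with $\mathcal{D}_{\neq,1}$, or keep $y\partial_y\omega_k$ in $\mathcal{D}_{\neq,5}$ and use \eqref{psi_neq1}--\eqref{psi_neq3} together with Lemma~\ref{w_neq} for the other factor. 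Young's convolution inequality in $k$ closes the frequency sums, with the high/low split chosen so that the factor carrying more derivatives absorbs $|k|^{m}$. The result should again be $\lesssim\nu^{-2/3}\mathcal{E}^{1/2}\mathcal{D}_{\neq}$, with the powers of $\nu$ balanced exactly as in \eqref{w1}--\eqref{w2}. Piece (ii) is handled with \eqref{zero_vorticity_Linfty}, or with $\|\partial_y\omega_0\|_{L^2}$ against $\|k\psi_k\|_{L^\infty}$, giving the same type of bound.

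The main obstacle is piece (i). The pairing $\langle \partial_y\psi_0\, k\omega_k, ky\partial_y\omega_k\rangle$ has no commutator cancellation because of the $y$-weight. Integrating by parts instead gives $\tfrac12\int \partial_y(y\,\partial_y\psi_0)|k\omega_k|^2$, which involves $\partial_y\psi_0$ and $y\omega_0$. The latter is not controlled by $\mathcal{E}_0$. The plan is to split $\partial_y\psi_0$ into its value at time zero plus its growth, $\partial_y\psi_0(t)=\partial_y\psi_0(0)+\int_0^t\partial_t\partial_y\psi_0$.

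The initial part is small by the assumption $\|u_{\mathrm{in},0}\|\le\varepsilon\nu^{2/3}$. With the $L^\infty$ interpolation \eqref{zero_velocity_Linfty}, it contributes to the term $2\delta_0(\mathcal{D}_{\neq,1}+\mathcal{D}_{\neq,4})$ after Young's inequality. This uses $|k|^{2m}\|k\omega_k\|\,\|y\partial_y\omega_k\|\lesssim \delta_0^{-1}\nu^{-1/2}|k|^{\dots}$ and $\varepsilon\le\varepsilon_0\ll\delta_0$.

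For the growth part, the zero-mode velocity equation, obtained by averaging \eqref{model2}, has forcing $\partial_y P_0(u_{2}u_{1})_{\neq}=\partial_y\sum_\ell i\ell\psi_\ell\,\partial_y\psi_{-\ell}$ plus viscous terms. I will bound $\|\partial_y\psi_0(t)-e^{\nu t\partial_y^2}\partial_y\psi_0(0)\|$ in the relevant norm by the time integral $\int_0^t\big(\sum|k|^{2m}\|\nabla_k\psi_k\|^2\big)^{1/2}\big(\sum|k|^{2m}\|\omega_k\|^2\big)^{1/2}d\tau$. The power $1/4$ in \eqref{N3} reflects the Gagliardo--Nirenberg interpolation $\|f\|_{L^\infty}\lesssim\|f\|_{L^2}^{1/2}\|\partial_yf\|_{L^2}^{1/2}$ applied twice: once between the $L^2$ growth bound and $\|\omega_0\|$, and once more for the remaining frequency factor. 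This accounts for the extra loss $\nu^{-1/4}$.

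This last step is where the bookkeeping is most delicate. I would first verify that the exponents of $\nu$ match $\nu^{-2/3-1/4}$ before treating the remaining terms.
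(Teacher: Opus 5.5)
Your overall structure matches the paper's, but there is a genuine gap in your bookkeeping of the mean--fluctuation interaction. Repairing it removes what you, and the paper, treat as the main obstacle.

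\textbf{The unweighted pairing.} The pairing $|k|^{2m-1}\operatorname{Re}\langle\mathcal N_k,ik\omega_k\rangle$ does \emph{not} behave like $\mathcal N^{(1)}_{\neq}$. Lemma~\ref{le_N1} disposes of the mean--fluctuation part because $\operatorname{Re}\langle\omega_k,u_0^1\,ik\omega_k\rangle=0$. The extra factor $i$ destroys this: $\operatorname{Re}\langle u_0^1\,ik\omega_k,ik\omega_k\rangle=\int_{\mathbb R}k^2u_0^1|\omega_k|^2\,\mathrm dy$, which contains the undifferentiated $u_0^1=-\partial_y\psi_0$ (this is the paper's $\mathcal R_{0,\neq}$).

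\textbf{The weighted pairing.} The cancellation you say is missing is present, but it occurs \emph{between} the two pairings. For any real $U=U(y)$, the terms where $\partial_y$ falls on $\omega_k$ combine into a purely imaginary quantity:
\[
\operatorname{Re}\bigl\langle\partial_y(ikU\omega_k),iky\omega_k\bigr\rangle+\operatorname{Re}\bigl\langle iky\,ikU\omega_k,\partial_y\omega_k\bigr\rangle
=\int_{\mathbb R}k^2y\,\partial_yU\,|\omega_k|^2\,\mathrm dy
+\operatorname{Re}\int_{\mathbb R}k^2yU\bigl(\partial_y\omega_k\,\overline{\omega_k}-\omega_k\,\overline{\partial_y\omega_k}\bigr)\,\mathrm dy .
\]
The last integral vanishes. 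Take $U=u_0^1$, so $\partial_yU=-\omega_0$. Then the entire mean--fluctuation contribution to $\mathcal N^{(3)}_{\neq}$ is, up to sign, $\beta e^{2\delta_0\nu^{1/2}t}\sum_{k\neq0}|k|^{2m+1}\int_{\mathbb R}y\,\omega_0|\omega_k|^2\,\mathrm dy$. In your own decomposition, the term $\int k^2u_0^1|\omega_k|^2$ produced by integrating by parts in the weighted pairing is exactly cancelled by the one you assigned to Lemma~\ref{le_N1}.

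The surviving $y\omega_0$ term, which you also flagged, is harmless. Put the weight on $\omega_k$ and use \eqref{zero_vorticity_Linfty}, $\mathcal D_{\neq,3}$ and \eqref{w1} to bound it by $\nu^{-5/12}\mathcal E_0^{1/2}\mathcal D_{\neq,3}^{1/2}\mathcal D_{\neq,1}^{1/6}\mathcal D_{\neq,4}^{1/3}\lesssim\nu^{-2/3}\mathcal E^{1/2}\mathcal D_{\neq}$ (this is the paper's $\mathcal T_6$). Your sketch of the fluctuation--mean and fluctuation--fluctuation pieces is in line with the paper's. Together they already give \eqref{N3} with only its first term on the right, and the other two terms are nonnegative.

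\textbf{Comparison with the paper.} The paper does not exploit this cancellation. It reduces $\mathcal N^{(3)}_{\neq}$ to $\mathcal N^{(3,1)}_{\neq}$ and $\mathcal R_{\neq}$ (in fact $\mathcal N^{(3)}_{\neq}=2\mathcal N^{(3,1)}_{\neq}+\mathcal R_{\neq}$) and bounds the two separately. As a result, $\mathcal T_5$ and $\mathcal R_{0,\neq}$, which satisfy $2\mathcal T_5+\mathcal R_{0,\neq}=0$, each require a bound on $\|u_0^1\|_{L^\infty}$. That is the origin of the last two terms of \eqref{N3}.

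Your account of those terms is wrong: the factor $\nu^{-1/4}$ does not come from Gagliardo--Nirenberg. In the paper, one tests \eqref{eq_u0} with $u_0^1$ and absorbs the forcing into $\nu\|\partial_yu_0^1\|_{L^2}^2$ by Young. This gives $\|u_0^1\|_{L^2}^2\lesssim\|u_{\mathrm{in},0}\|_{L^2}^2+\nu^{-1}\sup_\tau\|u^2_{\neq}\|_{L^2}^2\int_0^t\|u^1_{\neq}\|_{L^\infty}^2\,\mathrm d\tau$. Then $\|u_0^1\|_{L^\infty}\lesssim\|u_0^1\|_{L^2}^{1/2}\|\omega_0\|_{L^2}^{1/2}$ takes a fourth root of this. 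That fourth root produces both $(\nu^{-1})^{1/4}$ and the exponent $1/4$. Gagliardo--Nirenberg enters again only to bound $\|u^1_{\neq}\|_{L^\infty}^2$ by the integrand of the time integral.

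Your Duhamel route, carried out in $L^2$ where the heat semigroup contracts, would instead give $\|u_0^1(t)\|_{L^2}\le\|u_{\mathrm{in},0}\|_{L^2}+\int_0^t\|\partial_yP_0(u^1_{\neq}u^2_{\neq})\|_{L^2}\,\mathrm d\tau$. That bound is linear in the time integral and loses no power of $\nu$, so it produces a term of a different shape from the one in \eqref{N3}. Splitting as $\partial_y\psi_0(0)+\int_0^t\partial_t\partial_y\psi_0$ without the semigroup would also drag the viscous term along. The cancellation above makes all of this unnecessary.
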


The \(y\omega_k\)-term is estimated using the weighted elliptic estimate in Lemma~\ref{mix} and the frequency decomposition \eqref{frequency_partition}.

\begin{lemma}[Estimate of $\mathcal{N}_{\neq}^{(4)}$]\label{le_N4}
	\begin{align}\label{N4}
		\bigl|\mathcal{N}_{\neq}^{(4)}\bigr| \lesssim \nu^{-2/3}
		\mathcal{E}^{1/2}
		\left(\mathcal{D}_{\neq,3} +
		\mathcal{D}_{\neq,5}^{1/2} \mathcal{D}^{1/4}_{\neq,3} \mathcal{D}^{1/12}_{\neq,1} \mathcal{D}^{1/6}_{\neq,4} +
		\mathcal{D}_{\neq,5}^{1/2}
		\mathcal{D}_{\neq,3}^{1/2} + \mathcal{D}_{\neq,3}^{1/4}
		\mathcal{D}_{\neq,4}^{1/4}
		\mathcal{D}_{\neq,5}^{1/2}\right).
	\end{align}
\end{lemma}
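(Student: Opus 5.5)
We must bound
\[
\mathcal N_{\neq}^{(4)}=2\gamma\nu^{-1/2}e^{2\delta_0\nu^{1/2}t}\sum_{k\neq0}|k|^{2m-\frac32}\operatorname{Re}\langle iky\omega_k,iky\mathcal N_k\rangle .
\]
The plan is to write $\mathcal N_k$ via the convolution formula and split each interaction $(\ell,k-\ell)$ into three parts: the zero-mode velocity acting on nonzero vorticity ($\ell=0$ in the first sum, i.e. $-\partial_y\psi_0\, ik\omega_k$), nonzero velocity acting on the zero-mode vorticity ($k-\ell=0$ in the second sum, $ik\psi_k\,\partial_y\omega_0$), and the purely nonzero--nonzero interactions. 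Throughout we use $\nu^{-1/2}|k|^{2m-3/2}\cdot|k|^2=\nu^{-1/2}|k|^{2m+1/2}$ and split it as
\[
\bigl(\nu^{-1/4}|k|^{m+1/4}\bigr)\bigl(\nu^{-1/4}|k|^{m+1/4}\bigr).
\]
One factor paired with $\|y\nabla_k\omega_k\|$ or $\|y\omega_k\|$ gives (up to $\nu$ powers) $\mathcal D_{\neq,5}^{1/2}$ or $\mathcal D_{\neq,3}^{1/2}$. The other factor, together with the forcing, is controlled by $\mathcal E^{1/2}$ times dissipation. Since the transport form is $u\cdot\nabla\omega$ with $y$-weights, we first integrate by parts in $y$ where a derivative lands on $\omega_{k-\ell}$. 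This moves $\partial_y$ onto $y\omega_k$, producing $y\partial_y\omega_k$ (in $\mathcal D_{\neq,5}$) and a harmless $\omega_k$ term. For the transport part with $ik\psi_\ell$ no integration by parts is needed, since $\partial_x$ falls on the velocity.

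\textbf{Zero-mode velocity term.} The term $\langle iky\omega_k, iky\,\partial_y\psi_0\,ik\omega_k\rangle$ is purely imaginary in the sense that $\operatorname{Re}(i|k|^2k\,\partial_y\psi_0 y^2|\omega_k|^2)=0$, since $\partial_y\psi_0$ is real. So this exact cancellation kills the zero-mode shear. The companion term $i\ell\psi_0=0$ vanishes trivially.

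\textbf{Zero-mode vorticity term.} This is $\langle iky\omega_k,iky\,ik\psi_k\partial_y\omega_0\rangle$. We bound it by
\[
\nu^{-1/2}\sum_k|k|^{2m+3/2}\|y\omega_k\|_{L^2}\,\|y\psi_k\|_{L^\infty}\,\|\partial_y\omega_0\|_{L^2},
\]
using \eqref{y_psi_infty} together with $\|y\omega_k\|$ and the Fourier-weight bookkeeping. We use $\|\partial_y\omega_0\|\lesssim\nu^{-1/6}\mathcal E^{1/2}$, and note that $|k|^{2m+3/2}$ must be split as $|k|^{m+1/2}\cdot|k|^{m+1}$, the extra $|k|^{1/2}$ being taken from \eqref{fact1}'s $|k|^4\|y\psi_k\|^2$. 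The result is $\nu^{-1/2-1/6}\mathcal E^{1/2}\mathcal D_{\neq,3}=\nu^{-2/3}\mathcal E^{1/2}\mathcal D_{\neq,3}$. This is exactly the first term of \eqref{N4} and explains the power $\nu^{-2/3}$.

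\textbf{Nonzero--nonzero interactions.} This is the main obstacle, since one of the two frequencies may be much larger than the other. We use the frequency partition \eqref{frequency_partition} into $|\ell|\le |k-\ell|$ and $|\ell|>|k-\ell|$ and use $|k|^{s}\lesssim|\ell|^s+|k-\ell|^s$.
\begin{itemize}[leftmargin=*]
\item \emph{Velocity at low frequency.} The velocity is placed in $L^\infty$ via \eqref{psi_neq3} or \eqref{psi_neq1}/\eqref{psi_neq2}, costing $\nu^{-1/8}\mathcal D_{\neq,3}^{1/4}\mathcal D_{\neq,4}^{1/4}$. The high-frequency vorticity factor $y\partial_y\omega_{k-\ell}$ (after integration by parts) is paired with $\mathcal D_{\neq,5}^{1/2}$. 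The remaining $\nu^{-1/4}|k-\ell|^{m+1/4}\|y\omega_{k-\ell}\|$ is bounded by $\mathcal E^{1/2}$ through \eqref{Ekdengjia}. This yields the last term of \eqref{N4}.
\item \emph{Vorticity at low frequency.} The weight $y$ is moved onto the velocity, and $\|y\psi_\ell\|_{L^\infty}$ is controlled by \eqref{y_psi_infty}, giving $\mathcal D_{\neq,3}^{1/2}$. The vorticity factor $\partial_y\omega_{k-\ell}$ or $\omega_{k-\ell}$ is placed in $L^\infty$ by Gagliardo--Nirenberg in the form of \eqref{w3}, or in $L^2$ interpolated via Lemma~\ref{w_neq}. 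This produces $\mathcal D_{\neq,5}^{1/2}\mathcal D_{\neq,3}^{1/2}$, and, when derivative weights must be interpolated between $\mathcal D_{\neq,1}$ and $\mathcal D_{\neq,4}$ as in \eqref{w1}, the mixed term $\mathcal D_{\neq,5}^{1/2}\mathcal D_{\neq,3}^{1/4}\mathcal D_{\neq,1}^{1/12}\mathcal D_{\neq,4}^{1/6}$.
\end{itemize}
Throughout, the convergence of $\sum|\ell|^{5/4-2m}$ (from $m>9/8$) handles the discrete Young/Hölder in $\ell$. The delicate point is checking in each sub-case that the $\nu$ powers collected (from $\nu^{-1/2}$, the $\nu^{\pm1/4}$ in $E_k$, and the $\nu$ in the $\mathcal D_j$) never exceed $\nu^{-2/3}$. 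If a sub-case comes out worse, we would shift more weight onto $\mathcal D_{\neq,4}$ via \eqref{w2} instead.
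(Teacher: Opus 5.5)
Your treatment of the degenerate interactions agrees with the paper. The $\ell=0$ term vanishes because $\partial_y\psi_0$ is real. The $\ell=k$ term gives $\nu^{-1/2}\cdot\nu^{-1/6}\mathcal{E}^{1/2}\mathcal{D}_{\neq,3}$ via \eqref{y_psi_infty}. Your ``velocity at low frequency'' case is the paper's bound for $\mathcal{T}_{11}$ on $\Omega_{\mathrm{HL}}$, up to index slips: it is $y\omega_k$, not $y\omega_{k-\ell}$, that goes into $\mathcal{E}^{1/2}$.

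The gap is the ``vorticity at low frequency'' case for the transport term $i\ell\psi_\ell\,\partial_y\omega_{k-\ell}$ with $|k-\ell|\ll|\ell|\simeq|k|$. This is exactly where the power $\nu^{-2/3}$ is decided, and you leave it to an unchecked hedge. The allocation you describe cannot reach $\nu^{-2/3}$.

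\emph{Without integration by parts.} With $y$ moved onto the velocity, the factors are $y\omega_k$, $y\psi_\ell$ and a $y$-free $\partial_y\omega_{k-\ell}$.
\begin{itemize}
\item The $y$-free $\partial_y\omega$ is controlled only by $\mathcal{E}$ (cost $\nu^{-1/4}$) or by $\mathcal{D}_{\neq,1}$, or by $\mathcal{D}_{\neq,2},\mathcal{D}_{\neq,4}$ (cost $\nu^{-1/2}$).
\item The factors $y\omega$ and $y\psi$ gain at most $\nu^{1/4}$ from a full $\mathcal{E}^{1/2}$, and nothing from $\mathcal{D}_{\neq,3}$.
\end{itemize}
So no distribution of exponents beats $\nu^{-1/2}\nu^{-1/4}=\nu^{-3/4}$. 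Note also that \eqref{w3} bounds $\sum_k\|\omega_k\|_{L^\infty}$, not $\partial_y\omega$.

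\emph{With your integration by parts.} The output factor becomes $y\partial_y\omega_k$, which only $\mathcal{D}_{\neq,5}$ controls, at cost $\nu^{-1/4}$. Keeping the other $y$ on the velocity as you propose gives, with exponential weights suppressed,
\[
\nu^{-1/2}\sum_{(k,\ell)\in\Omega_{\mathrm{LH}}}\bigl(|k|^{m+\frac14}\|y\partial_y\omega_k\|_{L^2}\bigr)\bigl(|\ell|^{m+\frac54}\|y\psi_\ell\|_{L^\infty}\bigr)\|\omega_{k-\ell}\|_{L^2}
\lesssim\nu^{-3/4}\mathcal{E}^{1/2}\mathcal{D}_{\neq,5}^{1/2}\mathcal{D}_{\neq,3}^{1/2}.
\]
This is the term $\mathcal{D}_{\neq,5}^{1/2}\mathcal{D}_{\neq,3}^{1/2}$ you announce, but with $\nu^{-3/4}$. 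No redistribution helps: the pair $y\psi_\ell$, $\omega_{k-\ell}$ has net $\nu$-gain zero. Since $\mathcal{E}\sim\varepsilon^2\nu^{4/3}$, this contributes $\varepsilon\nu^{-1/12}\mathcal{D}$, and the bootstrap does not close. Shifting weight via \eqref{w2} only adds negative powers of $\nu$. Your integration by parts also drops the term where $\partial_y$ falls on $\psi_\ell$.

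\emph{The missing idea} is to leave both the derivative and the $y$ weight on the low-frequency vorticity, with no integration by parts, as the paper does for $\mathcal{T}_{12}$:
\begin{itemize}
\item $\|y\partial_y\omega_{k-\ell}\|_{L^2}\le\|y\nabla_{k-\ell}\omega_{k-\ell}\|_{L^2}$, summed in $\ell^1$, costs $\nu^{-1/4}\mathcal{D}_{\neq,5}^{1/2}$.
\item The output $|k|^{m+1/4}\|y\omega_k\|_{L^2}$ goes into the $\nu^{-1/2}|k|^{1/2}\|y\omega_k\|^2$ part of $E_k$ and \emph{gains} $\nu^{1/4}$.
\item The leftover weight $|\ell|^{m+5/4}\|\psi_\ell\|_{L^\infty}$ is handled by \eqref{psi_neq2}, at cost $\nu^{-1/6}\mathcal{D}_{\neq,1}^{1/12}\mathcal{D}_{\neq,3}^{1/4}\mathcal{D}_{\neq,4}^{1/6}$.
\end{itemize}
The count $\nu^{-1/2+1/4-1/4-1/6}=\nu^{-2/3}$ gives exactly the second term of \eqref{N4}.

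Likewise, for $\partial_y\psi_\ell\,i(k-\ell)\omega_{k-\ell}$ in this region, keep $y$ on $\omega_{k-\ell}$:
\begin{itemize}
\item sum $|k-\ell|\|y\omega_{k-\ell}\|_{L^2}$ in $\ell^1$ via $\mathcal{D}_{\neq,3}$;
\item pair $|k|^{m+1/2}\|y\omega_k\|_{L^2}$ with $\mathcal{D}_{\neq,3}$;
\item use $\|\partial_y\psi_\ell\|_{L^\infty}\lesssim|\ell|^{-1/2}\|\omega_\ell\|_{L^2}$ with $\mathcal{E}$.
\end{itemize}
This yields $\nu^{-1/2}\mathcal{E}^{1/2}\mathcal{D}_{\neq,3}$. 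Moving $y$ onto the stream function via \eqref{y_psi_infty} is needed only in the $\ell=k$ term, where $\partial_y\omega_0$ cannot carry the weight.
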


For \(\mathcal{N}_{\neq}^{(5)}\), we use \(\omega_k=\Delta_k\psi_k\), integrate by parts in \(y\),  and apply the decomposition \eqref{nonlinear_interaction_decomposition}.

\begin{lemma}[Estimate of $\mathcal{N}_{\neq}^{(5)}$]\label{le_N5}
	\begin{align}\label{N5}
	\bigl|\mathcal{N}_{\neq}^{(5)}\bigr|
	\lesssim
	\nu^{-5/8}
	\mathcal{E}_{\neq}^{1/2}
	\mathcal{D}_{\neq,3}^{1/2}
	\mathcal{D}_{\neq,3}^{1/4}
	\mathcal{D}_{\neq,4}^{1/4}
	+
	\nu^{-1/2}
	\mathcal{E}^{1/2}
	\mathcal{D}_{\neq,3}.
	\end{align}
	
\end{lemma}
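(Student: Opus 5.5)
The plan is to first simplify $\mathcal{N}_{\neq}^{(5)}$ and then split the nonlinearity into zero-mode and nonzero-mode interactions. Since $\operatorname{Re}\langle ik\mathcal{N}_k,ik\psi_k\rangle=|k|^2\operatorname{Re}\langle\mathcal{N}_k,\psi_k\rangle$, we have
\[
\mathcal{N}_{\neq}^{(5)}=-2\gamma\nu^{-1/2}e^{2\delta_0\nu^{1/2}t}\sum_{k\neq0}|k|^{2m+\frac12}\operatorname{Re}\langle\mathcal{N}_k,\psi_k\rangle .
\]
I will split $\mathcal{N}_k$ in the convolution according to whether the velocity or the vorticity factor is the zero mode:
\[
\mathcal{N}_k=-\partial_y\psi_0\,ik\omega_k+ik\psi_k\,\partial_y\omega_0+\sum_{\ell\neq0,\,\ell\neq k}\bigl(-\partial_y\psi_\ell\,i(k-\ell)\omega_{k-\ell}+i\ell\psi_\ell\,\partial_y\omega_{k-\ell}\bigr).
\]
(The $\ell=0$ term $i\ell\psi_\ell$ vanishes.) I expect the first two pieces to produce the term $\nu^{-1/2}\mathcal{E}^{1/2}\mathcal{D}_{\neq,3}$, and the sum over $\ell$ to produce the $\nu^{-5/8}$ term.

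\emph{Zero-mode interactions.} The term $\operatorname{Re}\langle ik\psi_k\partial_y\omega_0,\psi_k\rangle=\operatorname{Re}\,ik\int\partial_y\omega_0|\psi_k|^2\,\mathrm dy$ vanishes identically, because it is purely imaginary. For the transport term, I substitute $\omega_k=\partial_y^2\psi_k-k^2\psi_k$. The $k^2$ part gives $ik^3\int\partial_y\psi_0|\psi_k|^2$, which is again purely imaginary. One integration by parts in the $\partial_y^2$ part gives $ik\int\partial_y\psi_0|\partial_y\psi_k|^2$, which is also imaginary, plus $ik\int\omega_0\,\partial_y\psi_k\,\overline{\psi_k}$. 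The only surviving contribution is therefore bounded by
\[
|k|\,\|\omega_0\|_{L^2}\|\partial_y\psi_k\|_{L^2}\|\psi_k\|_{L^\infty}.
\]
Using $\|\psi_k\|_{L^\infty}\lesssim|k|^{-1/2}\|\nabla_k\psi_k\|_{L^2}$, as in the proof of \eqref{psi_neq1}, the weighted sum is at most
\[
\|\omega_0\|_{L^2}\sum_{k\neq0}|k|^{2m+1}\|\nabla_k\psi_k\|_{L^2}^2\lesssim\mathcal{E}_0^{1/2}e^{-2\delta_0\nu^{1/2}t}\mathcal{D}_{\neq,3}.
\]
This yields the $\nu^{-1/2}\mathcal{E}^{1/2}\mathcal{D}_{\neq,3}$ term. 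It is important here to keep $\omega_0$ in $L^2$ rather than using \eqref{zero_vorticity_Linfty}, since the latter would lose $\nu^{-1/12}$.

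\emph{Nonzero--nonzero interactions.} In the term $i\ell\psi_\ell\,\partial_y\omega_{k-\ell}$, I integrate by parts in $y$ to move $\partial_y$ onto $\psi_\ell\overline{\psi_k}$. Both resulting pieces, together with the term $\partial_y\psi_\ell\,i(k-\ell)\omega_{k-\ell}$, have the schematic form
\[
\int(\text{derivative of }\psi_\ell)\,\omega_{k-\ell}\,(\text{at most one }\partial_y\text{ or }k\text{ on }\psi_k).
\]
I place the $\ell$-factor in $L^\infty$, the factor $\omega_{k-\ell}$ in $L^2$, and the $\psi_k$-factor in $L^2$. I then split the weight using $|k|^{m}\lesssim|k-\ell|^m+|\ell|^m$ and distribute the remaining $|k|^{1/2}$ powers, or equivalently partition frequencies into $|\ell|\le|k-\ell|$ and $|\ell|>|k-\ell|$. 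There are two regimes:
\begin{itemize}
\item When the weight falls on $\omega_{k-\ell}$, I sum in $\ell$ via \eqref{psi_neq3}, which gives the factor $\nu^{-1/8}e^{-\delta_0\nu^{1/2}t}\mathcal{D}_{\neq,3}^{1/4}\mathcal{D}_{\neq,4}^{1/4}$. The factor $\omega_{k-\ell}$ is bounded by $\mathcal{E}_{\neq}^{1/2}$, and $|k|^{m+1/2}\|\nabla_k\psi_k\|_{L^2}$ by $\mathcal{D}_{\neq,3}^{1/2}$, using Young's convolution inequality in $k$.
\item When the weight falls on $\psi_\ell$, the roles swap, and I use $\sum_{k-\ell}\|\omega_{k-\ell}\|_{L^\infty}$ from \eqref{w3} or a Gagliardo--Nirenberg bound in the same manner.
\end{itemize}
Multiplying by the prefactor $\nu^{-1/2}$ gives $\nu^{-5/8}\mathcal{E}_{\neq}^{1/2}\mathcal{D}_{\neq,3}^{1/2}\mathcal{D}_{\neq,3}^{1/4}\mathcal{D}_{\neq,4}^{1/4}$.

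The main obstacle I expect is the bookkeeping of frequency weights in the nonzero--nonzero part. There is the extra $|k|^{1/2}$ coming from the $|k|^{2m+1/2}$ weight, together with one $k$ or $\partial_y$ on $\psi_k$. These must be allocated so that every factor matches exactly one available quantity: $|\ell|^{2m+1}\|\nabla_\ell\psi_\ell\|^2$ in $\mathcal{D}_{\neq,3}$, $|\ell|^{2m+1/2}\|\omega_\ell\|^2$ in $\mathcal{D}_{\neq,4}$, or $|k|^{2m}\|\omega_k\|^2$ in $\mathcal{E}_{\neq}$. In particular, the leftover $\ell$-sum must converge. This is where the hypothesis $m>9/8$ enters, through $\sum|\ell|^{5/4-2m}<\infty$ in \eqref{psi_neq3}, and I would verify each frequency regime separately against it.
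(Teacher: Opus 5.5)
\textbf{Verdict: genuine gap, in the low--high frequency regime.} Your zero-mode analysis is exactly the paper's. The $\ell=k$ term is purely imaginary, and after $\omega_k=\Delta_k\psi_k$ and one integration by parts only $ik\int\omega_0\,\partial_y\psi_k\,\overline{\psi_k}\,\mathrm dy$ survives, bounded with $\omega_0\in L^2$ and \eqref{psi_neq1}. Your high--low regime also matches: the weight goes to $\omega_{k-\ell}$ and the $\ell$-sum is closed with \eqref{psi_neq3}.

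\textbf{The step that fails.} On $\Omega_{\mathrm{LH}}$, where $|k-\ell|<|\ell|/2$ and $|k|\simeq|\ell|$, you swap roles and put $\omega_{k-\ell}$ in $L^\infty$ via \eqref{w3}. Then $\psi_k$ and the $\psi_\ell$-factor both sit in $L^2$ with the high weights, each giving $\mathcal{D}_{\neq,3}^{1/2}$, while \eqref{w3} gives $\nu^{-1/8}\mathcal{E}_{\neq}^{1/2}$. This piece is therefore bounded only by
\[
\nu^{-5/8}\,\mathcal{E}_{\neq}^{1/2}\,\mathcal{D}_{\neq,3},
\]
not by $\nu^{-5/8}\mathcal{E}_{\neq}^{1/2}\mathcal{D}_{\neq,3}^{3/4}\mathcal{D}_{\neq,4}^{1/4}$ as you assert. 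It is not controlled by the right-hand side of \eqref{N5}: for small $\nu$ that would require $\mathcal{D}_{\neq,3}\lesssim\mathcal{D}_{\neq,4}$, which is false in general, since $\mathcal{D}_{\neq,4}$ carries the factor $\nu^{1/2}$ and no $y$-weight. Your fallback of a Gagliardo--Nirenberg bound on $\omega_{k-\ell}$ in $L^\infty$ has the same defect. It needs $\partial_y\omega_{k-\ell}$, which costs a power of $\nu$ and never produces a $\mathcal{D}_{\neq,4}$ factor. For the bootstrap this loss would be harmless, since $\nu^{-5/8}\le\nu^{-2/3}$, but it does not prove the lemma as stated.

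\textbf{The repair.} This is the paper's treatment of $\mathcal{T}_{13}|_{\Omega_{\mathrm{LH}}}$, $\mathcal{S}_1$ and $\mathcal{S}_2$. On $\Omega_{\mathrm{LH}}$, keep $\omega_{k-\ell}$ in $L^2$ and sum it absolutely over $q=k-\ell$, using $\sum_{q\neq0}\|\omega_q\|_{L^2}\lesssim e^{-\delta_0\nu^{1/2}t}\mathcal{E}_{\neq}^{1/2}$ (valid since $m>1/2$). Put the \emph{undifferentiated} stream function in $L^\infty$ via \eqref{psi_neq1}, which loses no power of $\nu$.
\begin{itemize}
\item For the pieces with $k\psi_k$ or $\ell\partial_y\psi_\ell$, write $|k|^{2m+1/2}|\ell|\simeq|k|^{m+1}|\ell|^{m+1/2}$. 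Assign $|k|^{m+1}\|\psi_k\|_{L^\infty}$ and $|\ell|^{m+1/2}\|\partial_y\psi_\ell\|_{L^2}$.
\item For the piece $\ell\psi_\ell\,\overline{\partial_y\psi_k}$, interchange the roles of $k$ and $\ell$.
\end{itemize}
The discrete Young inequality $\sum_{k,\ell}a_kb_\ell c_{k-\ell}\le\|a\|_{\ell^2}\|b\|_{\ell^2}\|c\|_{\ell^1}$ then gives $\nu^{-1/2}\mathcal{E}_{\neq}^{1/2}\mathcal{D}_{\neq,3}$. So the second term of \eqref{N5} comes from the fluctuation--fluctuation part as well, not only from the zero mode as you expected.

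Alternatively, in the pieces containing $\partial_y\psi_\ell$ you may keep $\partial_y\psi_\ell$ in $L^\infty$. To do so, move the excess weight onto $\psi_k$ using $|k|\simeq|\ell|$ and sum $|\ell|^m\|\partial_y\psi_\ell\|_{L^\infty}$ in $\ell^2$. By Gagliardo--Nirenberg this is $\lesssim\nu^{-1/8}e^{-\delta_0\nu^{1/2}t}\mathcal{D}_{\neq,3}^{1/4}\mathcal{D}_{\neq,4}^{1/4}$.

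\textbf{A minor point.} For the piece $\ell\psi_\ell\,\overline{\partial_y\psi_k}$ in the high--low regime, \eqref{psi_neq3} is not the right input. Instead, $\sum_{\ell\neq0}|\ell|\|\psi_\ell\|_{L^\infty}\lesssim e^{-\delta_0\nu^{1/2}t}\mathcal{D}_{\neq,3}^{1/2}$ holds and suffices.
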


\medskip
\subsection{Completion of the proof of Theorem~\ref{thm1}}
With  the above five lemmas  in hand, we  begin to  use the bootstrap argument to complete the  proof of Theorem \ref{thm1}. By Lemma~\ref{lem:zero_mode} and Lemmas~\ref{le_N1}--\ref{le_N5},  we obtain
\begin{align}\label{E0}
\frac12\frac{\mathrm{d}}{\mathrm{d}t}\mathcal{E}_0
+
\mathcal{D}_0
\lesssim
\nu^{-7/12}
\mathcal{D}_0^{1/2}
\mathcal{D}_{\neq,3}^{1/2}
\mathcal{E}_{\neq}^{1/2},
\end{align}
and
\begin{align*}
\begin{aligned}
\frac12\frac{\mathrm{d}}{\mathrm{d}t}\mathcal{E}_{\neq}
&+
8\delta_0\mathcal{D}_{\neq}
+
8\delta_0\nu^{1/2}
e^{2\delta_0\nu^{1/2}t}
\sum_{k\neq 0}
|k|^{2m+\frac12}E_k[\omega_k]
\\
&\le
C\nu^{-2/3}\mathcal{E}^{1/2}\left(\mathcal{D}_{\neq} + \mathcal{D}_0\right) + 2\delta_0
\left(
\mathcal{D}_{\neq,1}+\mathcal{D}_{\neq,4}
\right)
\nonumber\\
&\quad +C\nu^{-2/3-1/4}
\mathcal{E}^{1/2}
\mathcal{D}_{\neq}
\left(
\int_0^t
\left(
\sum_{k\neq 0}
|k|^{2m}\|\nabla_k\psi_k\|_{L^2}^2
\right)^{1/2}
\left(
\sum_{k\neq 0}
|k|^{2m}\|\omega_k\|_{L^2}^2
\right)^{1/2}
d\tau
\right)^{1/4}.\nonumber
\end{aligned}
\end{align*}

For $t\ge0$, define
\begin{align*}
\mathcal{E}_{\mathrm{tot}}(t)
:=
\sup_{0\le s\le t}\mathcal{E}(s)
+\int_0^t\bigl(\mathcal{D}_{\neq}(s)+\mathcal{D}_0(s)\bigr)\,\mathrm{d}s.
\end{align*}
The only term that is not immediately bounded by $\mathcal{E}_{\mathrm{tot}}$ is
\begin{align*}
\mathcal{I}(t)
:=\int_0^t
\left(\sum_{k\neq0}|k|^{2m}\|\nabla_k\psi_k\|_{L^2}^2\right)^{1/2}
\left(\sum_{k\neq0}|k|^{2m}\|\omega_k\|_{L^2}^2\right)^{1/2}\,\mathrm{d}\tau.
\end{align*}
By the definitions of $\mathcal{D}_{\neq,4}$ and $\mathcal{D}_{\neq,3}$, and since $|k|\ge1$ for $k\neq0$,
\begin{align*}
\mathcal{I}(t)
&\lesssim\nu^{-1/4}
\int_0^t e^{-2\delta_0\nu^{1/2}\tau}
\mathcal{D}_{\neq,3}^{1/2}(\tau)\mathcal{D}_{\neq,4}^{1/2}(\tau) \,\mathrm{d}\tau
\\
&\lesssim\nu^{-1/4}\left(\int_0^t
\mathcal{D}_{\neq,3}(\tau)\,\mathrm{d}\tau\right)^{1/2}\left(\int_0^t
\mathcal{D}_{\neq,4}(\tau)\,\mathrm{d}\tau\right)^{1/2}
\\
&\lesssim\nu^{-1/4}\mathcal{E}_{\mathrm{tot}}(t),
\end{align*}
where the last step follows from Cauchy--Schwarz in time. Hence
\begin{align}\label{I_bound}
\mathcal{I}(t)^{1/4}
\lesssim \nu^{-1/16}\mathcal{E}_{\mathrm{tot}}(t)^{1/4}.
\end{align}
After integrating the combined energy inequalities, absorbing the $2\delta_0(\mathcal{D}_{\neq,1}+\mathcal{D}_{\neq,4})$ contribution into the left-hand side, and using \eqref{I_bound}, we obtain
\begin{align}\label{bootstrap_ineq}
\mathcal{E}_{\mathrm{tot}}(t)
\le C_0\mathcal{E}_{\mathrm{tot}}(0)
+C_1\nu^{-2/3}\mathcal{E}_{\mathrm{tot}}(t)^{3/2}
+C_1\nu^{-47/48}\mathcal{E}_{\mathrm{tot}}(t)^{7/4}.
\end{align}
The initial assumption \eqref{initial_data} and the equivalence of the modewise energy imply
\begin{align*}
\mathcal{E}_{\mathrm{tot}}(0)\le C_2\varepsilon^2\nu^{4/3}.
\end{align*}
Fix $M=4C_0C_2$ and suppose, as a bootstrap hypothesis, that
\begin{align*}
\mathcal{E}_{\mathrm{tot}}(t)\le M\varepsilon^2\nu^{4/3}
\end{align*}
on a maximal interval $[0,T]$. Substituting this bound into the two nonlinear terms in \eqref{bootstrap_ineq} gives
\begin{align*}
\nu^{-2/3}\mathcal{E}_{\mathrm{tot}}^{3/2}
&\le M^{3/2}\varepsilon^3\nu^{4/3},\\
\nu^{-47/48}\mathcal{E}_{\mathrm{tot}}^{7/4}
&\le M^{7/4}\varepsilon^{7/2}\nu^{65/48}
\le M^{7/4}\varepsilon^{7/2}\nu^{4/3},
\end{align*}
where we used $0<\nu<1$ and $65/48>4/3$. Therefore, if $\varepsilon_0$ is chosen sufficiently small depending only on $C_0,C_1,C_2$, then \eqref{bootstrap_ineq} improves the bootstrap bound to
\begin{align*}
\mathcal{E}_{\mathrm{tot}}(t)\le \frac{M}{2}\varepsilon^2\nu^{4/3},
\qquad 0\le t\le T.
\end{align*}
The standard continuity argument then yields $T=+\infty$ and
\begin{align*}
\mathcal{E}_{\mathrm{tot}}(t)\lesssim \varepsilon^2\nu^{4/3},
\qquad t\ge0.
\end{align*}
Finally, the equivalence defining $E_k[\omega_k]$ and the exponential factor in $\mathcal{E}_{\neq}$ give \eqref{nonzero}, while the definition of $\mathcal{E}_0$ gives \eqref{zero}. This completes the proof of Theorem~\ref{thm1}.\hfill $\square$

\medskip
\medskip
\section{Proof of  Lemmas \ref{le_N1}--\ref{le_N5}}\label{sec:proof-nonlinear-lemmas}
It remains to prove Lemmas \ref{le_N1}--\ref{le_N5}. For clarity, we
first specify the notation used to distinguish the different
interactions in the Fourier convolution. Throughout this section, the
first subscript refers to the Fourier mode of the velocity factor, while
the second subscript refers to the Fourier mode of the vorticity factor.
More precisely, for a fixed output mode $k\neq0$, the convolution is
decomposed according to
\begin{equation*}
\sum_{\ell\in\mathbb Z} V_\ell W_{k-\ell}
=
V_0W_k
+
V_kW_0
+
\sum_{\substack{\ell\neq0,\  k-\ell\neq0}}
V_\ell W_{k-\ell},
\end{equation*}
where $V_\ell$ denotes a velocity factor and $W_{k-\ell}$ denotes a
vorticity factor. Accordingly, we use the following notation:
\begin{itemize}
  \item $\mathcal{N}_{0,\neq}$ denotes the mean--fluctuation
  interaction, corresponding to $\ell=0$: the velocity factor is the
  zero mode and the vorticity factor is the nonzero mode;

\medskip
  \item $\mathcal{N}_{\neq,0}$ denotes the fluctuation--mean
  interaction, corresponding to $\ell=k$: the velocity factor is the
  nonzero mode and the vorticity factor is the zero mode;

\medskip
  \item $\mathcal{N}_{\neq,\neq}$ denotes the fluctuation--fluctuation
  interaction, corresponding to
  $\ell\neq0$ and $k-\ell\neq0$: both the velocity and vorticity factors
  are nonzero modes.
\end{itemize}

When necessary, we use the superscript $(j)$ to indicate the
contribution associated with $\mathcal{N}_{\neq}^{(j)}$. Thus, for
$j=1,\ldots,5$, we write
\begin{equation}\label{nonlinear_interaction_decomposition}
\mathcal{N}_{\neq}^{(j)}
=
\mathcal{N}_{0,\neq}^{(j)}
+
\mathcal{N}_{\neq,0}^{(j)}
+
\mathcal{N}_{\neq,\neq}^{(j)}.
\end{equation}
In particular, the labels in \eqref{nonlinear_interaction_decomposition}
refer to the modes of the two factors in the nonlinear convolution and
not to the output mode, which is always restricted to $k\neq0$ in the
definition of $\mathcal{N}_{\neq}^{(j)}$.

We shall repeatedly use the following frequency decomposition. For
$\ell\neq0$ and $k-\ell\neq0$, define
\begin{align*}
\Omega_{\mathrm{LH}}
&:=
\left\{
(k,\ell)\in\mathbb Z^2:
\ell\neq0,\ k-\ell\neq0,\
|k-\ell|<\frac{|\ell|}{2}
\right\},
\nonumber\\
\Omega_{\mathrm{HL}}
&:=
\left\{
(k,\ell)\in\mathbb Z^2:
\ell\neq0,\ k-\ell\neq0,\
|k-\ell|\ge\frac{|\ell|}{2}
\right\}.
\end{align*}
Then
\begin{equation}
\label{frequency_partition}
\Big\{
(k,\ell)\in\mathbb Z^2:
\ell\neq0,\ k-\ell\neq0
\Big\}
=
\Omega_{\mathrm{LH}}\mathbin{\dot\cup}\Omega_{\mathrm{HL}},
\end{equation}
where $\dot\cup$ denotes the disjoint union. The labels
$\mathrm{LH}$ and $\mathrm{HL}$ indicate, respectively, the region in
which $|k-\ell|$ is smaller than $|\ell|$ and the complementary region
in which $|k-\ell|$ is comparable to or larger than $|\ell|$.

On $\Omega_{\mathrm{LH}}$, we have
\begin{align*}
|k-\ell|<\frac{|\ell|}{2},
\qquad
\frac{|\ell|}{2}<|k|
\le |\ell|+|k-\ell|
<\frac32|\ell|,
\end{align*}
and hence
\begin{equation}
\label{frequency_comparability_LH}
|k|\simeq|\ell|
\qquad\text{on }\Omega_{\mathrm{LH}}.
\end{equation}
On $\Omega_{\mathrm{HL}}$, the triangle inequality gives
\begin{align*}
|\ell|
\le |k|+|k-\ell|,
\end{align*}
while the defining condition implies
$|\ell|\le2|k-\ell|$. Therefore,
\begin{equation}
\label{frequency_comparability_HL}
|k|\lesssim|k-\ell|,
\qquad
|\ell|\lesssim|k-\ell|
\qquad\text{on }\Omega_{\mathrm{HL}}.
\end{equation}
Consequently, whenever a sum is taken over the fluctuation--fluctuation
interactions, we shall use the decomposition
\begin{align*}
\sum_{\substack{\ell\neq0,\  k-\ell\neq0}}
F_{k,\ell}
&=
\sum_{(k,\ell)\in\Omega_{\mathrm{LH}}}F_{k,\ell}
+
\sum_{(k,\ell)\in\Omega_{\mathrm{HL}}}F_{k,\ell}.
\end{align*}
The same notation and decomposition will be used throughout the proofs
of Lemmas \ref{le_N1}--\ref{le_N5}.

\medskip
We begin with the proof of Lemma \ref{le_N1}, namely, the estimate for
the contribution $\mathcal{N}_{\neq}^{(1)}$.
\subsection{Proof of Lemma \ref{le_N1}}
\begin{proof}
By the definition of $\mathcal{N}^{(j)}$ and the first component of the
modewise hypocoercive energy, we have
	\begin{align*}
		\mathcal{N}_{\neq}^{(1)}
		&= -2e^{2\delta_0\nu^{1/2}t} \sum_{k\neq 0} |k|^{2m} \operatorname{Re} \Big\langle \omega_k, \sum_{\ell\in\mathbb{Z}} \partial_y\psi_\ell\, i(k-\ell)\omega_{k-\ell} \Big\rangle \\
		&\quad + 2e^{2\delta_0\nu^{1/2}t} \sum_{k\neq 0} |k|^{2m} \operatorname{Re} \Big\langle \omega_k, \sum_{\ell\in\mathbb{Z}} i\ell\psi_\ell\,\partial_y \omega_{k-\ell} \Big\rangle.
	\end{align*}
	In accordance with the convention introduced at the beginning of this
section, we decompose
\begin{align*}
\mathcal{N}_{\neq}^{(1)}
=
\mathcal{N}_{0,\neq}^{(1)}
+
\mathcal{N}_{\neq,0}^{(1)}
+
\mathcal{N}_{\neq,\neq}^{(1)}.
\end{align*}

	\smallskip\noindent\textbf{Mean--fluctuation interaction ($\ell=0$).}

	The zero-mode stream function $\psi_0$ depends  on $y$. Because $\partial_y\psi_0$ is real-valued, the integrand is purely imaginary, which immediately forces its real part to vanish identically:
	\begin{equation*}
		\mathcal{N}_{0,\neq}^{(1)} = -2e^{2\delta_0\nu^{1/2}t} \sum_{k\neq 0} |k|^{2m} \operatorname{Re} \left\langle \omega_k, ik (\partial_y\psi_0) \omega_k \right\rangle = 0.
	\end{equation*}

	\smallskip\noindent\textbf{Fluctuation--mean interaction ($k=\ell$).}

	When the fluctuation acts on the mean flow, the relevant term reduces to
	\begin{equation*}
		\mathcal{N}_{\neq,0}^{(1)} = 2e^{2\delta_0\nu^{1/2}t} \sum_{k\neq 0} |k|^{2m} \operatorname{Re} \left\langle \omega_k, ik\psi_k\,\partial_y \omega_0 \right\rangle.
	\end{equation*}
	Applying the Cauchy--Schwarz inequality in $y$ and recalling that $\|\partial_y \omega_0\|_{L^2} \le \alpha^{-1/2} \nu^{-1/6} \mathcal{E}_0^{1/2}$, we obtain
	\begin{equation*}
		\mathcal{N}_{\neq,0}^{(1)} \lesssim \nu^{-1/6} \mathcal{E}_0^{1/2} e^{2\delta_0\nu^{1/2}t} \sum_{k\neq 0} |k|^{2m+1} \|\omega_k\|_{L^2} \|\psi_k\|_{L^\infty}.
	\end{equation*}
	Moreover, we invoke the discrete Cauchy--Schwarz inequality alongside the $L^\infty$ bound \eqref{psi_neq1} from Lemma~\ref{psi_neq}:
	\begin{align*}
		\sum_{k\neq 0} |k|^{2m+1} \|\omega_k\|_{L^2} \|\psi_k\|_{L^\infty}
		&\le \Big( \sum_{k\neq 0} |k|^{2m} \|\omega_k\|_{L^2}^2 \Big)^{1/2} \Big( \sum_{k\neq 0} |k|^{2m+2} \|\psi_k\|_{L^\infty}^2 \Big)^{1/2} \\
		&\lesssim \big( \nu^{-1/2} e^{-2\delta_0\nu^{1/2}t} \mathcal{D}_{\neq,4} \big)^{1/2} \big( e^{-2\delta_0\nu^{1/2}t} \mathcal{D}_{\neq,3} \big)^{1/2}.
	\end{align*}
	Multiplying by the prefactors yields $\mathcal{N}_{\neq,0}^{(1)} \lesssim \nu^{-5/12} \mathcal{E}_0^{1/2} \mathcal{D}_{\neq,4}^{1/2} \mathcal{D}_{\neq,3}^{1/2}$, which is bounded by the first term in \eqref{N1} since $\nu \in (0,1)$.

\smallskip\noindent\textbf{Fluctuation--fluctuation interaction ($k\neq\ell, \ell\neq 0$).}

	For the fully fluctuating interactions, the algebraic commutator identity $-i(k-\ell)\partial_y\psi_\ell\,\omega_{k-\ell} + i\ell\psi_\ell\,\partial_y\omega_{k-\ell} = i\ell\partial_y(\psi_\ell \omega_{k-\ell}) - ik\partial_y\psi_\ell \omega_{k-\ell}$ permits us to decompose the convolution as $\mathcal{N}_{\neq,\neq}^{(1)} = \mathcal{T}_1 + \mathcal{T}_2$, where
	\begin{align*}
		\mathcal{T}_1 &= 2e^{2\delta_0\nu^{1/2}t} \sum_{k\neq 0} |k|^{2m} \operatorname{Re} \Big\langle \omega_k, \sum_{\ell\neq 0, \ell\neq k} i\ell \partial_y (\psi_\ell \omega_{k-\ell}) \Big\rangle, \\
		\mathcal{T}_2 &= -2e^{2\delta_0\nu^{1/2}t} \sum_{k\neq 0} |k|^{2m} \operatorname{Re} \Big\langle \omega_k, \sum_{\ell\neq 0, \ell\neq k} ik \partial_y \psi_\ell\, \omega_{k-\ell} \Big\rangle.
	\end{align*}
	
	In $\mathcal{T}_1$, transferring the derivative onto $\omega_k$ via integration by parts and applying the weight inequality $|k|^{2m} \lesssim |\ell|^{2m} + |k-\ell|^{2m}$ alongside Young's inequality for discrete convolutions yields
	\begin{align*}
		|\mathcal{T}_1| &\lesssim e^{2\delta_0\nu^{1/2}t} \sum_{k, \ell} |k|^{2m} \|\partial_y \omega_k\|_{L^2} \|\ell\psi_\ell\|_{L^\infty} \|\omega_{k-\ell}\|_{L^2} \lesssim \nu^{-1/2} \mathcal{D}_{\neq,1}^{1/2} \mathcal{D}_{\neq,3}^{1/2} \mathcal{E}_{\neq}^{1/2}.
	\end{align*}
	We next estimate $\mathcal{T}_2$. By the frequency partition
\eqref{frequency_partition}, we write
\begin{align*}
\mathcal{T}_2
&=
\left.\mathcal{T}_2\right|_{\Omega_{\mathrm{LH}}}
+
\left.\mathcal{T}_2\right|_{\Omega_{\mathrm{HL}}}.
\end{align*}

On $\Omega_{\mathrm{HL}}$, we have
$
|\ell|\lesssim |k-\ell|$, $
|k|\lesssim |k-\ell|,
$
by \eqref{frequency_comparability_HL}. Hence,
\begin{align*}
|k|^{2m+1}
\lesssim
|k|^{m+\frac12}|k-\ell|^{m+\frac12}.
\end{align*}
Therefore, by the Cauchy--Schwarz inequality and the weighted discrete
convolution estimate,
\begin{align*}
\left|
\left.\mathcal{T}_2\right|_{\Omega_{\mathrm{HL}}}
\right|
\lesssim&
e^{2\delta_0\nu^{1/2}t}
\sum_{(k,\ell)\in\Omega_{\mathrm{HL}}}
\Bigl(
|k|^{m+\frac12}\|\omega_k\|_{L_y^2}
\Bigr)
\Bigl(
|k-\ell|^{m+\frac12}
\|\omega_{k-\ell}\|_{L_y^2}
\Bigr)
\|\partial_y\psi_\ell\|_{L_y^\infty}
\nonumber\\
\lesssim&
e^{2\delta_0\nu^{1/2}t}
\Big(
\sum_{q\neq0}
|q|^{2m+1}\|\omega_q\|_{L_y^2}^2
\Big)
\Big(
\sum_{\ell\neq0}
\|\partial_y\psi_\ell\|_{L_y^\infty}
\Big).
\end{align*}

We next estimate the last factor. By the one-dimensional
Gagliardo--Nirenberg inequality applied to $\partial_y\psi_\ell$, we
have
\begin{align}\label{nonzero_mode_velocity_sum}
\sum_{\ell\neq0}
\|\partial_y\psi_\ell\|_{L_y^\infty}
&\lesssim
\sum_{\ell\neq0}
|\ell|^{-1/2}\|\omega_\ell\|_{L_y^2}
\le
\Big(
\sum_{\ell\neq0}|\ell|^{-2m-1}
\Big)^{1/2}
\Big(
\sum_{\ell\neq0}
|\ell|^{2m}\|\omega_\ell\|_{L_y^2}^2
\Big)^{1/2}.
\end{align}
The first series on the right-hand side is finite for every $m>0$.
Furthermore, by the coercivity of $E_\ell$ and the definition of
$\mathcal E_{\neq}$,
\begin{equation}\label{nonzero_mode_vorticity_energy}
\sum_{\ell\neq0}
|\ell|^{2m}\|\omega_\ell(t)\|_{L_y^2}^2
\lesssim
e^{-2\delta_0\nu^{1/2}t}\mathcal E_{\neq}(t).
\end{equation}
Consequently,
\begin{equation}\label{nonzero_mode_sobolev_estimate}
\sum_{\ell\neq0}
\|\partial_y\psi_\ell(t)\|_{L_y^\infty}
\lesssim
e^{-\delta_0\nu^{1/2}t}
\mathcal E_{\neq}(t)^{1/2}.
\end{equation}

On the other hand, the estimate \eqref{w1} gives
\begin{equation}\label{N1_T2_HL_interpolation}
\sum_{q\neq0}
|q|^{2m+1}\|\omega_q(t)\|_{L_y^2}^2
\lesssim
\nu^{-2/3}
e^{-2\delta_0\nu^{1/2}t}
\mathcal D_{\neq,1}(t)^{1/3}
\mathcal D_{\neq,4}(t)^{2/3}.
\end{equation}
Combining
\eqref{nonzero_mode_sobolev_estimate} and
\eqref{N1_T2_HL_interpolation}, we obtain
\begin{align}\label{N1_T2_HL_final}
\left|
\left.\mathcal{T}_2\right|_{\Omega_{\mathrm{HL}}}
\right|
\lesssim
\nu^{-2/3}
\mathcal E_{\neq}^{1/2}
\mathcal D_{\neq,1}^{1/3}
\mathcal D_{\neq,4}^{2/3}.
\end{align}
Here and below, the exponential factors are absorbed into the
definitions of $\mathcal{E}_{\neq}$ and
$\mathcal{D}_{\neq,j}$.

On $\Omega_{\mathrm{LH}}$, we have
$
|k|\simeq|\ell|
$
by \eqref{frequency_comparability_LH}. Consequently,
\begin{align*}
|k|^{2m+1}
\lesssim
|k|^{m+\frac14}|\ell|^{m+\frac34}.
\end{align*}
Using the one-dimensional Gagliardo--Nirenberg inequality and the
elliptic relation $\partial_y^2\psi_\ell=\omega_\ell+k^2\psi_\ell$, we
have
\begin{align*}
\|\partial_y\psi_\ell\|_{L_y^\infty}
&\lesssim
\|\partial_y\psi_\ell\|_{L_y^2}^{1/2}
\|\partial_y^2\psi_\ell\|_{L_y^2}^{1/2}
\lesssim
\|\partial_y\psi_\ell\|_{L_y^2}^{1/2}
\|\omega_\ell\|_{L_y^2}^{1/2}.
\end{align*}
It follows that
\begin{align*}
\left|
\left.\mathcal{T}_2\right|_{\Omega_{\mathrm{LH}}}
\right|
\lesssim&
e^{2\delta_0\nu^{1/2}t}
\sum_{(k,\ell)\in\Omega_{\mathrm{LH}}}
\Bigl(
|k|^{m+\frac14}\|\omega_k\|_{L_y^2}
\Bigr)
\Bigl(
|\ell|^{m+\frac34}
\|\omega_\ell\|_{L_y^2}^{1/2}
\|\partial_y\psi_\ell\|_{L_y^2}^{1/2}
\Bigr)
\|\omega_{k-\ell}\|_{L_y^2}.
\end{align*}
Applying the weighted discrete Cauchy--Schwarz and H\"older inequalities,
we obtain
\begin{align*}
\left|
\left.\mathcal{T}_2\right|_{\Omega_{\mathrm{LH}}}
\right|
\lesssim&
e^{2\delta_0\nu^{1/2}t}
\Big(
\sum_{k\neq0}
|k|^{2m+\frac12}
\|\omega_k\|_{L_y^2}^2
\Big)^{1/2}
\Big(
\sum_{q\neq0}
|q|^{2m}
\|\omega_q\|_{L_y^2}^2
\Big)^{1/2}
\nonumber\\
&\qquad\qquad\times
\Big(
\sum_{\ell\neq0}
|\ell|^{2m+2}
\|\omega_\ell\|_{L_y^2}^2
\Big)^{1/4}
\Big(
\sum_{\ell\neq0}
|\ell|^{2m+1}
\|\partial_y\psi_\ell\|_{L_y^2}^2
\Big)^{1/4}.
\end{align*}
The four factors on the right-hand side are controlled, respectively,
by $\mathcal{D}_{\neq,4}$, $\mathcal{E}_{\neq}$,
$\mathcal{D}_{\neq,1}$, and $\mathcal{D}_{\neq,3}$. More precisely,
using $|k|\ge1$ for $k\neq0$, we have
\begin{align*}
\Big(
\sum_{k\neq0}
|k|^{2m+\frac12}
\|\omega_k\|_{L_y^2}^2
\Big)^{1/2}
&\lesssim
\nu^{-1/4}
e^{-\delta_0\nu^{1/2}t}
\mathcal{D}_{\neq,4}^{1/2},&
\Big(
\sum_{q\neq0}
|q|^{2m}
\|\omega_q\|_{L_y^2}^2
\Big)^{1/2}
&\lesssim
e^{-\delta_0\nu^{1/2}t}
\mathcal{E}_{\neq}^{1/2},
\\
\Big(
\sum_{\ell\neq0}
|\ell|^{2m+2}
\|\omega_\ell\|_{L_y^2}^2
\Big)^{1/4}
&\lesssim
\nu^{-1/4}
e^{-\frac12\delta_0\nu^{1/2}t}
\mathcal{D}_{\neq,1}^{1/4},&
\Big(
\sum_{\ell\neq0}
|\ell|^{2m+1}
\|\partial_y\psi_\ell\|_{L_y^2}^2
\Big)^{1/4}
&\lesssim
e^{-\frac12\delta_0\nu^{1/2}t}
\mathcal{D}_{\neq,3}^{1/4},
\end{align*}
from which, we conclude that
\begin{align}
\left|
\left.\mathcal{T}_2\right|_{\Omega_{\mathrm{LH}}}
\right|
\lesssim
\nu^{-1/2}
\mathcal{E}_{\neq}^{1/2}
\mathcal{D}_{\neq,4}^{1/2}
\mathcal{D}_{\neq,1}^{1/4}
\mathcal{D}_{\neq,3}^{1/4}.
\label{N1_T2_LH_final}
\end{align}

Finally, combining \eqref{N1_T2_HL_final} and
\eqref{N1_T2_LH_final},
we obtain
\begin{align*}
\left|\mathcal{T}_2\right|
\lesssim
\nu^{-2/3}\mathcal{E}_{\neq}^{1/2}
\Bigl(
\mathcal{D}_{\neq,1}^{1/3}
\mathcal{D}_{\neq,4}^{2/3}
+
\mathcal{D}_{\neq,4}^{1/2}
\mathcal{D}_{\neq,1}^{1/4}
\mathcal{D}_{\neq,3}^{1/4}
\Bigr).
\end{align*}

	Summing the  previous estimates, we recover the composite bound \eqref{N1}. This completes the proof of Lemma \ref{le_N1}.
\end{proof}

\subsection{Proof of Lemma \ref{le_N2}}

\begin{proof}
 By the definition of $\mathcal{N}_{\neq}^{(2)}$, we have
\begin{align}\label{N2_expansion}
\mathcal{N}_{\neq}^{(2)}
=&
-2\alpha\nu^{1/2}e^{2\delta_0\nu^{1/2}t}
\sum_{k\neq0}|k|^{2m-\frac12}
\operatorname{Re}
\Big\langle
\nabla_k\omega_k,
\nabla_k
\sum_{\ell\in\mathbb Z}
\partial_y\psi_\ell\,i(k-\ell)\omega_{k-\ell}
\Big\rangle
\nonumber\\
&+
2\alpha\nu^{1/2}e^{2\delta_0\nu^{1/2}t}
\sum_{k\neq0}|k|^{2m-\frac12}
\operatorname{Re}
\Big\langle
\nabla_k\omega_k,
\nabla_k
\sum_{\ell\in\mathbb Z}
i\ell\psi_\ell\,\partial_y\omega_{k-\ell}
\Big\rangle .
\end{align}
We therefore decompose
\begin{align*}
\mathcal{N}_{\neq}^{(2)}
=
\mathcal{N}_{0,\neq}^{(2)}
+
\mathcal{N}_{\neq,0}^{(2)}
+
\mathcal{N}_{\neq,\neq}^{(2)}.
\end{align*}

\smallskip
\noindent\textbf{Mean--fluctuation interaction ($\ell=0$).}

When $\ell=0$, the velocity factor is the zero mode and the vorticity
factor is the nonzero mode. Since $\partial_y\psi_0$ is real-valued, the
terms in which $\nabla_k$ does not fall on $\partial_y\psi_0$ have zero
real part. Thus,
\begin{align*}
\mathcal{N}_{0,\neq}^{(2)}
=
-2\alpha\nu^{1/2}e^{2\delta_0\nu^{1/2}t}
\sum_{k\neq0}|k|^{2m-\frac12}
\operatorname{Re}
\Big\langle
\partial_y\omega_k,
\partial_y^2\psi_0\,ik\omega_k
\Big\rangle .
\end{align*}
Indeed, the contribution involving
$\partial_y\psi_0\,ik\partial_y\omega_k$ is purely imaginary after
taking the real part.

By Lemma \ref{psi_0}, we have
\begin{align*}
\|\partial_y^2\psi_0\|_{L_y^\infty}
=
\|\omega_0\|_{L_y^\infty}
\lesssim
\nu^{-1/12}\mathcal{E}_0^{1/2}.
\end{align*}
Therefore,
\begin{align*}
\left|
\mathcal{N}_{0,\neq}^{(2)}
\right|
&\lesssim
\nu^{5/12}\mathcal{E}_0^{1/2}
e^{2\delta_0\nu^{1/2}t}
\sum_{k\neq0}
|k|^{2m+\frac12}
\|\partial_y\omega_k\|_{L_y^2}
\|\omega_k\|_{L_y^2}.
\end{align*}
We distribute the Fourier weights according to
\begin{align*}
|k|^{2m+\frac12}
=
|k|^m
\bigl(|k|^{m+1}\bigr)^{1/3}
\bigl(|k|^{m+\frac14}\bigr)^{2/3}.
\end{align*}
Consequently, the discrete H\"older inequality gives
\begin{align}
\left|
\mathcal{N}_{0,\neq}^{(2)}
\right|
\lesssim&
\nu^{5/12}\mathcal{E}_0^{1/2}
e^{2\delta_0\nu^{1/2}t}
\sum_{k\neq0}
\bigl(|k|^m\|\partial_y\omega_k\|_{L_y^2}\bigr)
\bigl(|k|^{m+1}\|\omega_k\|_{L_y^2}\bigr)^{1/3}
\bigl(|k|^{m+\frac14}\|\omega_k\|_{L_y^2}\bigr)^{2/3}
\nonumber\\
\lesssim&
\nu^{-1/2}\mathcal{E}_0^{1/2}
\mathcal{D}_{\neq,1}^{2/3}
\mathcal{D}_{\neq,4}^{1/3}.
\label{N2_mean_fluctuation_final}
\end{align}
In the last inequality, we used the definitions of
$\mathcal{D}_{\neq,1}$ and $\mathcal{D}_{\neq,4}$, the fact that
$|k|\ge1$ for $k\neq0$, and the exponential weights built into these
functionals.

\smallskip
\noindent\textbf{Fluctuation--mean interaction $(\ell=k)$.}

When $\ell=k$, the factor $k-\ell$ vanishes in the first term of
\eqref{N2_expansion}. Hence, integrating by parts in $y$ and using the definition of $\Delta_k$, we
obtain
\begin{align*}
\mathcal{N}_{\neq,0}^{(2)}
=
-2\alpha\nu^{1/2}e^{2\delta_0\nu^{1/2}t}
\sum_{k\neq0}|k|^{2m-\frac12}
\operatorname{Re}
\Big\langle
\Delta_k\omega_k,
ik\psi_k\,\partial_y\omega_0
\Big\rangle .
\end{align*}
Using
$
\|\partial_y\omega_0\|_{L_y^2}
\lesssim
\nu^{-1/6}\mathcal{E}_0^{1/2}
$
and the one-dimensional Gagliardo--Nirenberg inequality
\begin{align*}
\|\psi_k\|_{L_y^\infty}
\lesssim
\|\psi_k\|_{L_y^2}^{1/2}
\|\partial_y\psi_k\|_{L_y^2}^{1/2},
\end{align*}
we find
\begin{align*}
\left|
\mathcal{N}_{\neq,0}^{(2)}
\right|
\lesssim&
\nu^{1/3}\mathcal{E}_0^{1/2}
e^{2\delta_0\nu^{1/2}t}
\sum_{k\neq0}
\bigl(
|k|^{m-\frac14}
\|\Delta_k\omega_k\|_{L_y^2}
\bigr)
\bigl(
|k|^{m+\frac32}
\|\psi_k\|_{L_y^2}
\bigr)^{1/2}
\bigl(
|k|^{m+\frac12}
\|\partial_y\psi_k\|_{L_y^2}
\bigr)^{1/2}.
\end{align*}
Applying the discrete H\"older inequality and using the definitions of
$\mathcal{D}_{\neq,2}$ and $\mathcal{D}_{\neq,3}$, we obtain
\begin{align}
\left|
\mathcal{N}_{\neq,0}^{(2)}
\right|
&\lesssim
\nu^{-1/2}\mathcal{E}_0^{1/2}
\mathcal{D}_{\neq,2}^{1/2}
\mathcal{D}_{\neq,3}^{1/2}.
\label{N2_fluctuation_mean_final}
\end{align}

\smallskip
\noindent\textbf{Fluctuation--fluctuation interaction
$(\ell\neq0,\ k-\ell\neq0)$.}

For the remaining contribution, we have
\begin{align*}
\mathcal{N}_{\neq,\neq}^{(2)}
=
\mathcal{T}_3+\mathcal{T}_4,
\end{align*}
where
\begin{align*}
\mathcal{T}_3
=&
-2\alpha\nu^{1/2}e^{2\delta_0\nu^{1/2}t}
\sum_{\substack{k\neq0\\ \ell\neq0,\ k-\ell\neq0}}
|k|^{2m-\frac12}
\operatorname{Re}
\Big\langle
\Delta_k\omega_k,
\partial_y\psi_\ell\,i(k-\ell)\omega_{k-\ell}
\Big\rangle,\\
\mathcal{T}_4
=&
2\alpha\nu^{1/2}e^{2\delta_0\nu^{1/2}t}
\sum_{\substack{k\neq0\\ \ell\neq0,\ k-\ell\neq0}}
|k|^{2m-\frac12}
\operatorname{Re}
\Big\langle
\Delta_k\omega_k,
i\ell\psi_\ell\,\partial_y\omega_{k-\ell}
\Big\rangle .
\end{align*}
We first estimate $\mathcal{T}_3$. According to the frequency
decomposition \eqref{frequency_partition}, we write
\begin{align*}
\mathcal{T}_3
&=
\left.\mathcal{T}_3\right|_{\Omega_{\mathrm{LH}}}
+
\left.\mathcal{T}_3\right|_{\Omega_{\mathrm{HL}}}.
\end{align*}

On $\Omega_{\mathrm{LH}}$, the frequency comparison
\eqref{frequency_comparability_LH} gives
$
|k|\simeq|\ell|.
$
Using the one-dimensional Sobolev inequality
\begin{align*}
\|\partial_y\psi_\ell\|_{L_y^\infty}
\lesssim
\|\partial_y\psi_\ell\|_{L_y^2}^{1/2}
\|\partial_y^2\psi_\ell\|_{L_y^2}^{1/2},
\end{align*}
together with the weighted discrete H\"older inequality, we obtain
\begin{align}
\label{N2_T3_LH}
\left|
\left.\mathcal{T}_3\right|_{\Omega_{\mathrm{LH}}}
\right|
\lesssim&
\nu^{1/2}e^{2\delta_0\nu^{1/2}t}
\sum_{(k,\ell)\in\Omega_{\mathrm{LH}}}
\bigl(
|k|^{m-\frac14}
\|\Delta_k\omega_k\|_{L_y^2}
\bigr)
\nonumber\\
&\quad\times
\bigl(
|\ell|^{\frac m2+\frac18}
\|\partial_y^2\psi_\ell\|_{L_y^2}^{1/2}
\bigr)
\bigl(
|\ell|^{\frac m2+\frac14}
\|\partial_y\psi_\ell\|_{L_y^2}^{1/2}
\bigr)
|k-\ell|^{1/2}
\|\omega_{k-\ell}\|_{L_y^2}
\nonumber\\
\lesssim&
\nu^{-1/2}
\mathcal{E}_{\neq}^{1/2}
\mathcal{D}_{\neq,2}^{1/2}
\mathcal{D}_{\neq,3}^{1/4}
\mathcal{D}_{\neq,4}^{1/4}.
\end{align}
In the last inequality, we used the elliptic relation
$
\omega_\ell=\Delta_\ell\psi_\ell,
$
the definitions of $\mathcal{D}_{\neq,2}$,
$\mathcal{D}_{\neq,3}$, and $\mathcal{D}_{\neq,4}$, as well as the
weighted discrete convolution estimate. The exponential factors are
absorbed by the exponential weights in the definitions of
$\mathcal{E}_{\neq}$ and $\mathcal{D}_{\neq,j}$.

On $\Omega_{\mathrm{HL}}$, the frequency comparison
\eqref{frequency_comparability_HL} yields
$|k|\lesssim|k-\ell|.$ Hence, the principal output-frequency weight can be transferred to the
$k-\ell$-frequency. Using the $L_y^\infty$ estimate for
$\partial_y\psi_\ell$, the weighted discrete convolution inequality,
and the interpolation estimate \eqref{w2}, we obtain
\begin{align}
\label{N2_T3_HL}
\left|
\left.\mathcal{T}_3\right|_{\Omega_{\mathrm{HL}}}
\right|
\lesssim&
\nu^{1/2}e^{2\delta_0\nu^{1/2}t}
\sum_{(k,\ell)\in\Omega_{\mathrm{HL}}}
\bigl(
|k|^{m-\frac14}
\|\Delta_k\omega_k\|_{L_y^2}
\bigr)
\|\partial_y\psi_\ell\|_{L_y^\infty}
\bigl(
|k-\ell|^{m+\frac34}
\|\omega_{k-\ell}\|_{L_y^2}
\bigr)
\nonumber\\
\lesssim&
\nu^{-2/3}
\mathcal{E}_{\neq}^{1/2}
\mathcal{D}_{\neq,2}^{1/2}
\mathcal{D}_{\neq,1}^{1/3}
\mathcal{D}_{\neq,4}^{1/6}.
\end{align}
Combining \eqref{N2_T3_LH} and \eqref{N2_T3_HL}, we conclude that
\begin{align}
\label{N2_T3_final}
\left|\mathcal{T}_3\right|
\lesssim&
\nu^{-1/2}
\mathcal{E}_{\neq}^{1/2}
\mathcal{D}_{\neq,2}^{1/2}
\mathcal{D}_{\neq,3}^{1/4}
\mathcal{D}_{\neq,4}^{1/4}
+
\nu^{-2/3}
\mathcal{E}_{\neq}^{1/2}
\mathcal{D}_{\neq,2}^{1/2}
\mathcal{D}_{\neq,1}^{1/3}
\mathcal{D}_{\neq,4}^{1/6}.
\end{align}

We next estimate $\mathcal{T}_4$. We again use the partition
\eqref{frequency_partition} and write
\begin{align*}
\mathcal{T}_4
&=
\left.\mathcal{T}_4\right|_{\Omega_{\mathrm{LH}}}
+
\left.\mathcal{T}_4\right|_{\Omega_{\mathrm{HL}}}.
\end{align*}

On $\Omega_{\mathrm{LH}}$, we have
$
|k|\simeq|\ell|.
$
Applying the one-dimensional Gagliardo--Nirenberg inequality
\begin{align*}
\|\psi_\ell\|_{L_y^\infty}
\lesssim
\|\psi_\ell\|_{L_y^2}^{1/2}
\|\partial_y\psi_\ell\|_{L_y^2}^{1/2},
\end{align*}
and then using the weighted discrete H\"older inequality, we obtain
\begin{align}
\label{N2_T4_LH}
\left|
\left.\mathcal{T}_4\right|_{\Omega_{\mathrm{LH}}}
\right|
\lesssim&
\nu^{1/2}e^{2\delta_0\nu^{1/2}t}
\sum_{(k,\ell)\in\Omega_{\mathrm{LH}}}
\bigl(
|k|^{m-\frac14}
\|\Delta_k\omega_k\|_{L_y^2}
\bigr)
\nonumber\\
&\quad\times
\bigl(
|\ell|^{m+\frac32}
\|\psi_\ell\|_{L_y^2}
\bigr)^{1/2}
\bigl(
|\ell|^{m+\frac12}
\|\partial_y\psi_\ell\|_{L_y^2}
\bigr)^{1/2}
\|\partial_y\omega_{k-\ell}\|_{L_y^2}
\nonumber\\
\lesssim&
\nu^{-1/2}
\mathcal{E}_{\neq}^{1/2}
\mathcal{D}_{\neq,2}^{1/2}
\mathcal{D}_{\neq,3}^{1/2}.
\end{align}

On $\Omega_{\mathrm{HL}}$, we have
\begin{align*}
|\ell|\lesssim|k-\ell|,
\qquad
|k|\lesssim|k-\ell|.
\end{align*}
We therefore transfer the relevant frequency weights to the
$k-\ell$-frequency. Using the Gagliardo--Nirenberg inequality for
$\psi_\ell$, we obtain
\begin{align}
\label{N2_T4_HL}
\left|
\left.\mathcal{T}_4\right|_{\Omega_{\mathrm{HL}}}
\right|
\lesssim&
\nu^{1/2}e^{2\delta_0\nu^{1/2}t}
\sum_{(k,\ell)\in\Omega_{\mathrm{HL}}}
\bigl(
|k|^{m-\frac14}
\|\Delta_k\omega_k\|_{L_y^2}
\bigr)
\nonumber\\
&\quad\times
\bigl(
|k-\ell|^{m-\frac14}
\|\partial_y\omega_{k-\ell}\|_{L_y^2}
\bigr)
|\ell|
\bigl(
\|\psi_\ell\|_{L_y^2}
\|\partial_y\psi_\ell\|_{L_y^2}
\bigr)^{1/2}
\nonumber\\
\lesssim&
\nu^{-1/2}
\mathcal{E}_{\neq}^{1/2}
\mathcal{D}_{\neq,2}^{1/2}
\mathcal{D}_{\neq,3}^{1/2}.
\end{align}
Combining \eqref{N2_T4_LH} and \eqref{N2_T4_HL}, we arrive at
\begin{align}
\label{N2_T4_final}
\left|\mathcal{T}_4\right|
\lesssim
\nu^{-1/2}
\mathcal{E}_{\neq}^{1/2}
\mathcal{D}_{\neq,2}^{1/2}
\mathcal{D}_{\neq,3}^{1/2}.
\end{align}

We now combine the estimates for the three interaction types. From
\eqref{N2_mean_fluctuation_final} and
\eqref{N2_fluctuation_mean_final}, we have
\begin{align*}
\left|
\mathcal{N}_{0,\neq}^{(2)}
\right|
+
\left|
\mathcal{N}_{\neq,0}^{(2)}
\right|
\lesssim&
\nu^{-1/2}\mathcal{E}^{1/2}
\Bigl(
\mathcal{D}_{\neq,1}^{2/3}
\mathcal{D}_{\neq,4}^{1/3}
+
\mathcal{D}_{\neq,2}^{1/2}
\mathcal{D}_{\neq,3}^{1/2}
\Bigr),
\end{align*}
where we used $\mathcal{E}_0\le\mathcal{E}$.

Moreover, by \eqref{N2_T3_final} and \eqref{N2_T4_final},  we obtain
\begin{align*}
\left|
\mathcal{N}_{\neq,\neq}^{(2)}
\right|
\lesssim&
\nu^{-1/2}\mathcal{E}_{\neq}^{1/2}
\mathcal{D}_{\neq,2}^{1/2}
\left(
\mathcal{D}_{\neq,3}^{1/2}
+
\mathcal{D}_{\neq,4}^{1/2}
\right)
+
\nu^{-2/3}\mathcal{E}_{\neq}^{1/2}
\mathcal{D}_{\neq,2}^{1/2}
\mathcal{D}_{\neq,1}^{1/3}
\mathcal{D}_{\neq,4}^{1/6}.
\end{align*}

Finally, since $0<\nu<1$ implies
$
\nu^{-1/2}\le\nu^{-2/3},
$
and since $\mathcal{E}_0,\mathcal{E}_{\neq}\le\mathcal{E}$, the preceding
estimates give
\begin{align*}
\left|
\mathcal{N}_{\neq}^{(2)}
\right|
\lesssim
\nu^{-2/3}\mathcal{E}^{1/2}
\Bigl(
&\mathcal{D}_{\neq,1}^{2/3}
\mathcal{D}_{\neq,4}^{1/3}
+
\mathcal{D}_{\neq,2}^{1/2}
\mathcal{D}_{\neq,3}^{1/2}
+
\mathcal{D}_{\neq,2}^{1/2}
\mathcal{D}_{\neq,4}^{1/2}
+
\mathcal{D}_{\neq,2}^{1/2}
\mathcal{D}_{\neq,1}^{1/3}
\mathcal{D}_{\neq,4}^{1/6}
\Bigr).
\end{align*}
This is precisely the estimate asserted in \eqref{N2}, and the proof is
complete.
\end{proof}

\subsection{Proof of Lemma \ref{le_N3}}

\begin{proof}
We first decompose the mixed contribution into
\begin{align*}
\mathcal{N}_{\neq}^{(3)}
&=
\mathcal{N}_{\neq}^{(3,1)}
+
\mathcal{N}_{\neq}^{(3,2)},
\end{align*}
where
\begin{align}
\label{N3_first_component}
\mathcal{N}_{\neq}^{(3,1)}
=&
\beta e^{2\delta_0\nu^{1/2}t}
\sum_{k\neq0}|k|^{2m-1}
\operatorname{Re}
\Big\langle
\partial_y\mathcal N_k,
iky\omega_k
\Big\rangle ,
\\
\label{N3_second_component}
\mathcal{N}_{\neq}^{(3,2)}
=&
\beta e^{2\delta_0\nu^{1/2}t}
\sum_{k\neq0}|k|^{2m-1}
\operatorname{Re}
\Big\langle
iky\mathcal N_k,
\partial_y\omega_k
\Big\rangle .
\end{align}

After integrating by parts in $y$, the estimate of
$\mathcal N_{\neq}^{(3)}$ reduces to the estimate of
$\mathcal N_{\neq}^{(3,1)}$ and the auxiliary unweighted term
\begin{align*}
\mathcal R_{\neq}
:=
\beta e^{2\delta_0\nu^{1/2}t}
\sum_{k\neq0}|k|^{2m-1}
\operatorname{Re}
\Big\langle
\mathcal N_k,
ik\omega_k
\Big\rangle .
\end{align*}
More precisely,
\begin{align*}
\left|\mathcal N_{\neq}^{(3)}\right|
\lesssim
\left|\mathcal N_{\neq}^{(3,1)}\right|
+
\left|\mathcal R_{\neq}\right|.
\end{align*}
The term $\mathcal N_{\neq}^{(3,2)}$ is treated in the same manner after
the corresponding integration by parts.

Expanding the nonlinear term in \eqref{N3_first_component}, we obtain
\begin{align}
\label{N3_first_expansion}
\mathcal N_{\neq}^{(3,1)}
=&
\beta e^{2\delta_0\nu^{1/2}t}
\sum_{k\neq0}|k|^{2m-1}
\operatorname{Re}
\Bigg\langle
iky\omega_k,
\partial_y
\sum_{\ell\in\mathbb Z}
\partial_y\psi_\ell\,i(k-\ell)\omega_{k-\ell}
\Bigg\rangle
\nonumber\\
&-
\beta e^{2\delta_0\nu^{1/2}t}
\sum_{k\neq0}|k|^{2m-1}
\operatorname{Re}
\Bigg\langle
iky\omega_k,
\partial_y
\sum_{\ell\in\mathbb Z}
i\ell\psi_\ell\,\partial_y\omega_{k-\ell}
\Bigg\rangle .
\end{align}

According to the interaction convention, we write
\begin{align*}
\mathcal N_{\neq}^{(3,1)}
&=
\mathcal N_{0,\neq}^{(3,1)}
+
\mathcal N_{\neq,0}^{(3,1)}
+
\mathcal N_{\neq,\neq}^{(3,1)}.
\end{align*}

\smallskip
\noindent\textbf{ Mean--fluctuation interaction $(\ell=0)$.}

For $\ell=0$, only the first term in
\eqref{N3_first_expansion} contributes. Hence,
\begin{align*}
\mathcal N_{0,\neq}^{(3,1)}
&=
\beta e^{2\delta_0\nu^{1/2}t}
\sum_{k\neq0}|k|^{2m-1}
\operatorname{Re}
\Bigg\langle
iky\omega_k,
\partial_y\bigl(
\partial_y\psi_0\,ik\omega_k
\bigr)
\Bigg\rangle .
\end{align*}
Integrating by parts and symmetrizing with respect to the $y$-variable
gives
\begin{align*}
\mathcal N_{0,\neq}^{(3,1)}
=&
-\frac{\beta}{2}
e^{2\delta_0\nu^{1/2}t}
\sum_{k\neq0}|k|^{2m-1}
\operatorname{Re}
\Big\langle
ik\omega_k,
\partial_y\psi_0\,ik\omega_k
\Big\rangle
\nonumber\\
&+
\frac{\beta}{2}
e^{2\delta_0\nu^{1/2}t}
\sum_{k\neq0}|k|^{2m-1}
\operatorname{Re}
\Big\langle
iky\omega_k,
\partial_y^2\psi_0\,ik\omega_k
\Big\rangle
\nonumber\\
=:&
\mathcal T_5+\mathcal T_6.
\end{align*}
		The second component $\mathcal{T}_{6}$ can be estimated directly. Indeed,  by Lemma \ref{psi_0}, we have
		\begin{align}\label{t6guji}
		|\mathcal{T}_{6}|
		&\lesssim
		e^{2\delta_0\nu^{1/2}t}
		\sum_{k\neq 0}
		|k|^{2m+1}
		\|y\omega_k\|_{L^2}
		\|\omega_k\|_{L^2}
		\|\partial_y^2\psi_0\|_{L^\infty}\nn\\
		&\lesssim
		\nu^{-1/12}\mathcal{E}_0^{1/2}
		e^{2\delta_0\nu^{1/2}t}
		\sum_{k\neq 0}
		\left(
		|k|^m|k|^{1/2}\|y\omega_k\|_{L^2}
		\right)
		\left(
		|k|^m|k|^{1/4}\|\omega_k\|_{L^2}
		\right)^{2/3}
		\left(
		|k|^m|k|\|\omega_k\|_{L^2}
		\right)^{1/3}
		\nn\\
		&\lesssim
		\nu^{-2/3}
		\mathcal{E}_0^{1/2}
		\mathcal{D}_{\neq,3}^{1/2}
		\mathcal{D}_{\neq,4}^{1/3}
		\mathcal{D}_{\neq,1}^{1/6}.
		\end{align}

		To bound the first component $\mathcal{T}_{5}$, we must control the evolution of the mean streamwise velocity $u_0^1$. Recall the momentum equation for $u^1=u_0^1+u_{\neq}^1$:
		\begin{align*}
		\partial_t u^1
		+
		y^2\partial_x u^1
		+
		2yu^2
		+
		u\cdot\nabla u^1
		+
		\partial_x p
		-
		\nu\Delta u^1
		=
		0.
		\end{align*}
		Projecting onto the zero mode yields
		\begin{align}\label{eq_u0}
		\partial_t u_0^1
		-
		\nu\partial_y^2 u_0^1
		=
		-\partial_y P_0(u_{\neq}^1u_{\neq}^2).
		\end{align}
		Taking an $L^2$-inner product of \eqref{eq_u0} with $u_0^1$ and applying Young's inequality, we obtain
		\begin{align*}
		\frac12\frac{\mathrm{d}}{\mathrm{d}t}\|u_0^1\|_{L^2_y}^2
		+
		\nu\|\partial_y u_0^1\|_{L^2_y}^2
		&=
		\int_{\mathbb{R}}
		P_0(u_{\neq}^1u_{\neq}^2)\,
		\partial_y u_0^1\,dy\\
		&\le \frac{\nu}{2}\|\partial_y u_0^1\|_{L^2_y}^2
		+
		\frac{1}{2\nu}
		\|P_0(u_{\neq}^1u_{\neq}^2)\|_{L^2_y}^2 .
		\end{align*}
		
		Using the bound $\|P_0(u_{\neq}^1u_{\neq}^2)\|_{L^2_y}^2 \lesssim \|u_{\neq}^1\|_{L^\infty_{x,y}}^2 \|u_{\neq}^2\|_{L^2_{x,y}}^2$ and integrating in time($ 0 \le s \le t$) gives
		\begin{align}
		\|u_0^1(s)\|_{L^2_y}^2
		\lesssim
		\|u_{\mathrm{in,0}}\|_{L^2}^2
		+
		\nu^{-1}
		\sup_{0\le \tau\le s}
		\|u_{\neq}^2(\tau)\|_{L^2_{x,y}}^2
		\int_0^s
		\|u_{\neq}^1(\tau)\|_{L^\infty_{x,y}}^2\,d\tau .
		\end{align}
		
		For the fluctuation $u_{\neq}^2$, we have
		\begin{align*}
		\|u_{\neq}^2\|_{L^2_{x,y}}^2
		\lesssim
		\sum_{k\neq 0}\|\omega_k\|_{L^2_y}^2
		\lesssim
		\mathcal{E}_{\neq}.
		\end{align*}
		Moreover, the one-dimensional Sobolev inequality implies
		\begin{align}\label{u1_neq}
		\|u_{\neq}^1\|_{L^\infty_{x,y}}^2
		&\lesssim
		\sup_y
		\sum_{k\neq 0}
		|k|^{2m}|\partial_y\psi_k(y)|^2 \nonumber\\
		&\lesssim
		\sum_{k\neq 0}
		|k|^{2m}
		\|\partial_y\psi_k\|_{L^2_y}
		\|\partial_y^2\psi_k\|_{L^2_y}
		\nonumber\\
		&\lesssim
		\Big(
		\sum_{k\neq 0}
		|k|^{2m}\|\nabla_k\psi_k\|_{L^2_y}^2
		\Big)^{1/2}
		\Big(
		\sum_{k\neq 0}
		|k|^{2m}\|\omega_k\|_{L^2_y}^2
		\Big)^{1/2}.
		\end{align}
		
		Returning to $\mathcal{T}_{5}$ and applying \eqref{w1} from Lemma~\ref{w_neq}, we deduce
		\begin{align}\label{nkjg}
		|\mathcal{T}_{5}|
		&\lesssim
		e^{2\delta_0\nu^{1/2}t}
		\sum_{k\neq 0}
		|k|^{2m+1}
		\|\omega_k\|_{L^2}^2
		\|u^1_0\|_{L^\infty}\nn\\
		&\lesssim
		\nu^{-2/3}
		\mathcal{D}_{\neq,1}^{1/3}
		\mathcal{D}_{\neq,4}^{2/3}
		\|u_0^1\|_{L^2}^{1/2}
		\|\omega_0\|_{L^2}^{1/2}\nn\\
		&\lesssim \nu^{-2/3} \mathcal{E}_0^{1/4}
		\mathcal{D}_{\neq,1}^{1/3}
		\mathcal{D}_{\neq,4}^{2/3}
		\|u_0^1\|_{L^2}^{1/2}\nn\\
		& \lesssim \nu^{-2/3} \mathcal{E}_0^{1/4}
		\mathcal{D}_{\neq,1}^{1/3}
		\mathcal{D}_{\neq,4}^{2/3} \left( \|u_{\mathrm{in,0}}\|_{L^2}^2
		+
		\nu^{-1}
		\sup_{0\le \tau\le s}
		\|u_{\neq}^2(\tau)\|_{L^2_{x,y}}^2
		\int_0^s
		\|u_{\neq}^1(\tau)\|_{L^\infty_{x,y}}^2\,d\tau \right)^{1/4}.
		\end{align}

		Under the smallness assumption $\|u_{\mathrm{in,0}}\|_{L^2_{y}} \le \varepsilon \nu^{2/3}$ of Theorem \ref{thm1}, the contribution involving the initial data is controlled via Young's inequality as
		\begin{align}\label{N3_T5_initial_data}
		\nu^{-2/3}
		\mathcal{E}_{0}^{1/4}
		\mathcal{D}_{\neq,1}^{1/3}
		\mathcal{D}_{\neq,4}^{2/3}
		\|u_{\mathrm{in},0}\|_{L^2}^{1/2}
		\le
		2\delta_0
		\left(
		\mathcal{D}_{\neq,1}+\mathcal{D}_{\neq,4}
		\right)
		+
		C\nu^{-2/3}
		\mathcal{E}_0^{1/2}
		\mathcal{D}_{\neq,1}^{1/3}
		\mathcal{D}_{\neq,4}^{2/3}.
		\end{align}
		For the nonlinear contribution in \eqref{nkjg}, the estimate \eqref{u1_neq} yields
		\begin{align}\label{N3_T5_nonlinear}
		&\nu^{-2/3-1/4}
		\mathcal{E}_{0}^{1/4}
		\mathcal{D}_{\neq,1}^{1/3}
		\mathcal{D}_{\neq,4}^{2/3}
		\sup_{0\le \tau\le s}
		\|u^2_{\neq}(\tau)\|_{L^2_{x,y}}^{1/2}
		\left(
		\int_0^s
		\|u_{\neq}^1(\tau)\|_{L^\infty_{x,y}}^2\,d\tau
		\right)^{1/4}
		\\
		&\quad\lesssim
		\nu^{-2/3-1/4}
		\mathcal{E}^{1/2}
		\mathcal{D}_{\neq,1}^{1/3}
		\mathcal{D}_{\neq,4}^{2/3}
		\left(
		\int_0^s
		\left(
		\sum_{k\neq 0}
		|k|^{2m}\|\nabla_k\psi_k\|_{L^2}^2
		\right)^{1/2}
		\left(
		\sum_{k\neq 0}
		|k|^{2m}\|\omega_k\|_{L^2}^2
		\right)^{1/2}
		d\tau
		\right)^{1/4}.\nn
		\end{align}

Combining \eqref{N3_T5_initial_data}, \eqref{N3_T5_nonlinear} and \eqref{t6guji},
 we establish the bound for the mean--fluctuation interaction:
		\begin{align*}
			\mathcal{N}_{0,\neq}^{(3,1)} &\lesssim \nu^{-2/3}
			\mathcal{E}_0^{1/2}
			\mathcal{D}_{\neq,3}^{1/2}
			\mathcal{D}_{\neq,4}^{1/3}
			\mathcal{D}_{\neq,1}^{1/6} + 2\delta_0
			\left(
			\mathcal{D}_{\neq,1}+\mathcal{D}_{\neq,4}
			\right)
			+
			C\nu^{-2/3}
			\mathcal{E}_0^{1/2}
			\mathcal{D}_{\neq,1}^{2/3}
			\mathcal{D}_{\neq,4}^{1/3}\\
			&\quad +\nu^{-2/3-1/4}
			\mathcal{E}^{1/2}
			\mathcal{D}_{\neq,1}^{1/3}
			\mathcal{D}_{\neq,4}^{2/3}
			\left(
			\int_0^s
			\left(
			\sum_{k\neq 0}
			|k|^{2m}\|\nabla_k\psi_k\|_{L^2}^2
			\right)^{1/2}
			\left(
			\sum_{k\neq 0}
			|k|^{2m}\|\omega_k\|_{L^2}^2
			\right)^{1/2}
			d\tau
			\right)^{1/4}.
		\end{align*}

\smallskip
\noindent\textbf{Fluctuation--mean interaction $(\ell=k)$.}

For $\ell=k$, the first term in
\eqref{N3_first_expansion} vanishes because $k-\ell=0$. Thus,
\begin{align*}
\mathcal N_{\neq,0}^{(3,1)}
&=
-\beta e^{2\delta_0\nu^{1/2}t}
\sum_{k\neq0}|k|^{2m-1}
\operatorname{Re}
\Bigg\langle
iky\omega_k,
\partial_y\bigl(
ik\psi_k\,\partial_y\omega_0
\bigr)
\Bigg\rangle .
\end{align*}
After integration by parts,
\begin{align*}
\mathcal N_{\neq,0}^{(3,1)}
=&
\beta e^{2\delta_0\nu^{1/2}t}
\sum_{k\neq0}|k|^{2m-1}
(\operatorname{Re}
\Big(\big\langle
ik\omega_k,
ik\psi_k\,\partial_y\omega_0
\Big\rangle+\operatorname{Re}
\Big\langle
iky\partial_y\omega_k,
ik\psi_k\,\partial_y\omega_0
\big\rangle \Big).
\end{align*}
Using
\begin{align*}
\|\partial_y\omega_0\|_{L_y^2}
\lesssim
\nu^{-1/6}\mathcal E_0^{1/2},
\qquad
\|\psi_k\|_{L_y^\infty}
\lesssim
|k|^{-1/2}\|\nabla_k\psi_k\|_{L_y^2},
\end{align*}
we obtain
\begin{align*}
\left|
\mathcal N_{\neq,0}^{(3,1)}
\right|
\lesssim&
\nu^{-1/2}
\mathcal E_0^{1/2}
\left(
\mathcal D_{\neq,4}^{1/2}
+
\mathcal D_{\neq,5}^{1/2}
\right)
\mathcal D_{\neq,3}^{1/2}
\lesssim
\nu^{-2/3}
\mathcal E^{1/2}\mathcal D_{\neq}.
\end{align*}

\smallskip
\noindent\textbf{Fluctuation--fluctuation interaction
$(\ell\neq0,\ k-\ell\neq0)$.}

For the nonzero--nonzero interaction, we write
\begin{align*}
\mathcal N_{\neq,\neq}^{(3,1)}
&=
\mathcal T_7+\mathcal T_8+\mathcal T_9+\mathcal T_{10},
\end{align*}
where
\begin{align*}
\mathcal T_7
&=
\beta e^{2\delta_0\nu^{1/2}t}
\sum_{\substack{k\neq0\\ \ell\neq0,\ k-\ell\neq0}}
|k|^{2m-1}
\operatorname{Re}
\Big\langle
iky\omega_k,
\partial_y^2\psi_\ell\,
i(k-\ell)\omega_{k-\ell}
\Big\rangle ,
\\
\mathcal T_8
&=
\beta e^{2\delta_0\nu^{1/2}t}
\sum_{\substack{k\neq0\\ \ell\neq0,\ k-\ell\neq0}}
|k|^{2m-1}
\operatorname{Re}
\Big\langle
iky\omega_k,
\partial_y\psi_\ell\,
i(k-\ell)\partial_y\omega_{k-\ell}
\Big\rangle ,
\\
\mathcal T_9
&=
-\beta e^{2\delta_0\nu^{1/2}t}
\sum_{\substack{k\neq0\\ \ell\neq0,\ k-\ell\neq0}}
|k|^{2m-1}
\operatorname{Re}
\Big\langle
ik\omega_k,
i\ell\psi_\ell\,
\partial_y\omega_{k-\ell}
\Big\rangle ,
\\
\mathcal T_{10}
&=
-\beta e^{2\delta_0\nu^{1/2}t}
\sum_{\substack{k\neq0\\ \ell\neq0,\ k-\ell\neq0}}
|k|^{2m-1}
\operatorname{Re}
\Big\langle
iky\partial_y\omega_k,
i\ell\psi_\ell\,
\partial_y\omega_{k-\ell}
\Big\rangle .
\end{align*}
We estimate these four terms separately, using throughout the frequency
partition \eqref{frequency_partition}.

\smallskip
\noindent\textbf{Estimate of $\mathcal T_9$ and $\mathcal T_{10}$.}
By the elementary weight inequality $|k|^m\lesssim|\ell|^m+|k-\ell|^m$ and the estimate
$
\|\ell\psi_\ell\|_{L_y^\infty}
\lesssim
|\ell|^{1/2}\|\nabla_\ell\psi_\ell\|_{L_y^2},
$
the weighted discrete convolution inequality gives
\begin{align*}
|\mathcal T_9|
&\lesssim
e^{2\delta_0\nu^{1/2}t}
\sum_{\substack{k\neq0\\ \ell\neq0,\ k-\ell\neq0}}
|k|^{2m}
\|\omega_k\|_{L_y^2}
\|\ell\psi_\ell\|_{L_y^\infty}
\|\partial_y\omega_{k-\ell}\|_{L_y^2}
\nonumber\\
&\lesssim
\nu^{-1/2}
\mathcal E_{\neq}^{1/2}
\mathcal D_{\neq,4}^{1/2}
\mathcal D_{\neq,3}^{1/2}.
\end{align*}
The same argument, with $y\partial_y\omega_k$ in place of
$\omega_k$, yields
\begin{align}
\label{N3_T10_estimate}
|\mathcal T_{10}|
&\lesssim
e^{2\delta_0\nu^{1/2}t}
\sum_{\substack{k\neq0\\ \ell\neq0,\ k-\ell\neq0}}
|k|^{2m}
\|y\partial_y\omega_k\|_{L_y^2}
\|\ell\psi_\ell\|_{L_y^\infty}
\|\partial_y\omega_{k-\ell}\|_{L_y^2}
\nonumber\\
&\lesssim
\nu^{-1/2}
\mathcal E_{\neq}^{1/2}
\mathcal D_{\neq,5}^{1/2}
\mathcal D_{\neq,3}^{1/2}.
\end{align}

\smallskip
\noindent\textbf{Estimate of $\mathcal T_7$.} Using the frequency partition \eqref{frequency_partition}, we write
\begin{align*}
\mathcal T_7
&=
\left.\mathcal T_7\right|_{\Omega_{\mathrm{LH}}}
+
\left.\mathcal T_7\right|_{\Omega_{\mathrm{HL}}}.
\end{align*}

On $\Omega_{\mathrm{HL}}$, the frequency relations
$
|k|\lesssim|k-\ell|$ and $
|\ell|\lesssim|k-\ell|
$
allow us to transfer the output-frequency weight to the
$k-\ell$-factor. Thus,
\begin{align}
\label{N3_T7_HL}
\left|
\left.\mathcal T_7\right|_{\Omega_{\mathrm{HL}}}
\right|
\lesssim&
e^{2\delta_0\nu^{1/2}t}
\sum_{\substack{(k,\ell)\in\Omega_{\mathrm{HL}}\\ k\neq0}}
\Bigl(
|k|^{m+\frac12}
\|y\omega_k\|_{L_y^2}
\Bigr)
\Bigl(
|k-\ell|^{m+\frac12}
\|\omega_{k-\ell}\|_{L_y^2}
\Bigr)
\|\partial_y^2\psi_\ell\|_{L_y^\infty}
\nonumber\\
\lesssim&
\nu^{-11/24}
\mathcal E_{\neq}^{1/2}
\mathcal D_{\neq,3}^{1/2}
\mathcal D_{\neq,1}^{1/6}
\mathcal D_{\neq,4}^{1/3}.
\end{align}
Here we used the discrete Young inequality and the following
estimate
\begin{align*}
\sum_{\ell\neq0}
\|\partial_y^2\psi_\ell\|_{L_y^\infty}
\lesssim
\nu^{-1/8}
e^{-\delta_0\nu^{1/2}t}
\mathcal E_{\neq}^{1/2}.
\end{align*}

On $\Omega_{\mathrm{LH}}$, we have $|k|\simeq|\ell|$. Hence the
principal weight can instead be assigned to the $\ell$-factor.
Using \eqref{w3}, the elliptic estimate
$
\|\partial_y^2\psi_\ell\|_{L_y^2}
\lesssim\|\omega_\ell\|_{L_y^2},
$
and the weighted convolution inequality, we find
\begin{align}
\label{N3_T7_LH}
\left|
\left.\mathcal T_7\right|_{\Omega_{\mathrm{LH}}}
\right|
\lesssim&
e^{2\delta_0\nu^{1/2}t}
\sum_{\substack{(k,\ell)\in\Omega_{\mathrm{LH}}\\ k\neq0}}
\Bigl(
|k|^{m+\frac12}
\|y\omega_k\|_{L_y^2}
\Bigr)
\Bigl(
|\ell|^{m+1}
\|\partial_y^2\psi_\ell\|_{L_y^2}
\Bigr)
\|\omega_{k-\ell}\|_{L_y^\infty}
\nonumber\\
\lesssim&
\nu^{-5/8}
\mathcal E_{\neq}^{1/2}
\mathcal D_{\neq,3}^{1/2}
\mathcal D_{\neq,1}^{1/2}.
\end{align}
Combining \eqref{N3_T7_HL} and \eqref{N3_T7_LH}, we obtain
\begin{align}\label{N3_T7_final}
|\mathcal T_7|
\lesssim&
\nu^{-11/24}
\mathcal E_{\neq}^{1/2}
\mathcal D_{\neq,3}^{1/2}
\mathcal D_{\neq,1}^{1/6}
\mathcal D_{\neq,4}^{1/3}
+
\nu^{-5/8}
\mathcal E_{\neq}^{1/2}
\mathcal D_{\neq,3}^{1/2}
\mathcal D_{\neq,1}^{1/2}.
\end{align}

\smallskip
\noindent\textbf{Estimate of $\mathcal T_8$.}
Using the decomposition $k=k-\ell+\ell$ and integrating by parts in  $y$ in the term involving the
the $k$-frequency,  we further expand $\mathcal{T}_8$ into four components:
	\begin{align*}
		\mathcal{T}_8
		& = -\beta e^{2\delta_0\nu^{1/2}t}
		\sum_{\substack{\ell,k\neq 0\\ k\neq \ell}}
		|k|^{2m-1}\Bigg\{
		\underbrace{\operatorname{Re}
		\Big\langle
		ik  \omega_k,
		\partial_y\psi_\ell\, ik   \omega_{k-\ell}
		\Big\rangle}_{\mathcal{T}_{8}^{(1)}}
+\underbrace{\operatorname{Re}
		\Big\langle
		ik y\partial_y \omega_k,
		\partial_y\psi_\ell\, ik   \omega_{k-\ell}
		\Big\rangle}_{\mathcal{T}_{8}^{(2)}}\Bigg\}\\
		&\quad -\beta e^{2\delta_0\nu^{1/2}t}
		\sum_{\substack{\ell,k\neq 0\\ k\neq \ell}}
		|k|^{2m-1}
		\Bigg\{\underbrace{\operatorname{Re}
		\Big\langle
		ik  y \omega_k,
		i\ell \partial_y\psi_\ell\,  \partial_{y} \omega_{k-\ell}
		\Big\rangle}_{\mathcal{T}_{8}^{(3)}}+\underbrace{\operatorname{Re}
		\Big\langle
		ik y \omega_k,
		\partial_y^2\psi_\ell\, ik   \omega_{k-\ell}
		\Big\rangle}_{\mathcal{T}_{8}^{(4)}}\Bigg\}.
		\end{align*}	
We next estimate the four components
$\mathcal{T}_{8}^{(1)},\ldots,\mathcal{T}_{8}^{(4)}$ using the common
frequency partition \eqref{frequency_partition}. For any
$j\in\{1,2,3,4\}$, we write
\begin{align}\label{N3_T8_frequency_decomposition}
\mathcal{T}_{8}^{(j)}
&=
\left.\mathcal{T}_{8}^{(j)}\right|_{\Omega_{\mathrm{LH}}}
+
\left.\mathcal{T}_{8}^{(j)}\right|_{\Omega_{\mathrm{HL}}}.
\end{align}
Recall that
\begin{align*}
|k|\simeq|\ell|
\quad\text{on }\Omega_{\mathrm{LH}},
\qquad
|k|\lesssim|k-\ell|,
\quad
|\ell|\lesssim|k-\ell|
\quad\text{on }\Omega_{\mathrm{HL}}.
\end{align*}

\smallskip
\noindent\textbf{Estimate of $\mathcal{T}_{8}^{(1)}$.}
By the definition of $\mathcal{T}_{8}^{(1)}$ and the Cauchy--Schwarz
inequality in $y$, we have
\begin{align*}
|\mathcal{T}_{8}^{(1)}|
\lesssim&
e^{2\delta_0\nu^{1/2}t}
\sum_{\substack{k\neq0\\ \ell\neq0,\ k-\ell\neq0}}
|k|^{2m+1}
\|\omega_k\|_{L_y^2}
\|\partial_y\psi_\ell\|_{L_y^\infty}
\|\omega_{k-\ell}\|_{L_y^2}.
\end{align*}

On $\Omega_{\mathrm{LH}}$, the relation
$|k|\simeq|\ell|$ allows us to distribute the Fourier weights between
the $k$- and $\ell$-factors. Using the one-dimensional
Gagliardo--Nirenberg inequality for $\partial_y\psi_\ell$, followed by
the weighted discrete H\"older and convolution inequalities, we obtain
\begin{align}
\label{N3_T81_LH}
\left|
\left.\mathcal{T}_{8}^{(1)}\right|_{\Omega_{\mathrm{LH}}}
\right|
\lesssim&
e^{2\delta_0\nu^{1/2}t}
\sum_{(k,\ell)\in\Omega_{\mathrm{LH}}}
\Bigl(
|k|^{m+\frac14}\|\omega_k\|_{L_y^2}
\Bigr)^{1/2}
\Bigl(
|k|^{m+1}\|\omega_k\|_{L_y^2}
\Bigr)^{1/2}
\nonumber\\
&\quad\times
\Bigl(
|\ell|^{m+\frac14}\|\omega_\ell\|_{L_y^2}
\Bigr)^{1/2}
\Bigl(
|\ell|^{m+\frac12}
\|\partial_y\psi_\ell\|_{L_y^2}
\Bigr)^{1/2}
\|\omega_{k-\ell}\|_{L_y^2}
\nonumber\\
\lesssim&
\nu^{-1/2}
\mathcal{E}_{\neq}^{1/2}
\mathcal{D}_{\neq,4}^{1/2}
\mathcal{D}_{\neq,1}^{1/4}
\mathcal{D}_{\neq,3}^{1/4}.
\end{align}

On $\Omega_{\mathrm{HL}}$, we use $|k|\lesssim|k-\ell|$ and transfer the output-frequency weight to the
$k-\ell$-factor. The weighted convolution inequality and the
interpolation estimate \eqref{w1} then give
\begin{align}
\label{N3_T81_HL}
\left|
\left.\mathcal{T}_{8}^{(1)}\right|_{\Omega_{\mathrm{HL}}}
\right|
\lesssim&
e^{2\delta_0\nu^{1/2}t}
\sum_{(k,\ell)\in\Omega_{\mathrm{HL}}}
\Bigl(
|k|^{m+\frac12}\|\omega_k\|_{L_y^2}
\Bigr)
\|\partial_y\psi_\ell\|_{L_y^\infty}
\Bigl(
|k-\ell|^{m+\frac12}
\|\omega_{k-\ell}\|_{L_y^2}
\Bigr)
\nonumber\\
\lesssim&
\nu^{-2/3}
\mathcal{E}_{\neq}^{1/2}
\mathcal{D}_{\neq,4}^{2/3}
\mathcal{D}_{\neq,1}^{1/3}.
\end{align}
Combining \eqref{N3_T81_LH} and \eqref{N3_T81_HL}, we obtain
\begin{align}
\label{N3_T81_final}
|\mathcal{T}_{8}^{(1)}|
\lesssim&
\nu^{-1/2}
\mathcal{E}_{\neq}^{1/2}
\mathcal{D}_{\neq,4}^{1/2}
\mathcal{D}_{\neq,1}^{1/4}
\mathcal{D}_{\neq,3}^{1/4}
+
\nu^{-2/3}
\mathcal{E}_{\neq}^{1/2}
\mathcal{D}_{\neq,4}^{2/3}
\mathcal{D}_{\neq,1}^{1/3}.
\end{align}

\smallskip
\noindent\textbf{Estimate of $\mathcal{T}_{8}^{(2)}$.}
By the definition of $\mathcal{T}_{8}^{(2)}$, we have
\begin{align*}
|\mathcal{T}_{8}^{(2)}|
\lesssim&
e^{2\delta_0\nu^{1/2}t}
\sum_{\substack{k\neq0\\ \ell\neq0,\ k-\ell\neq0}}
|k|^{2m+1}
\|y\partial_y\omega_k\|_{L_y^2}
\|\partial_y\psi_\ell\|_{L_y^\infty}
\|\omega_{k-\ell}\|_{L_y^2}.
\end{align*}

On $\Omega_{\mathrm{LH}}$, using
$|k|\simeq|\ell|$, the one-dimensional Sobolev inequality, and the
weighted convolution estimates, we obtain
\begin{align*}
\left|
\left.\mathcal{T}_{8}^{(2)}\right|_{\Omega_{\mathrm{LH}}}
\right|
\lesssim&
e^{2\delta_0\nu^{1/2}t}
\sum_{(k,\ell)\in\Omega_{\mathrm{LH}}}
\Bigl(
|k|^{m+\frac14}
\|y\partial_y\omega_k\|_{L_y^2}
\Bigr)
\nonumber\\
&\quad\times
\Bigl(
|\ell|^{m+\frac12}
\|\partial_y\psi_\ell\|_{L_y^2}
\Bigr)^{1/2}
\Bigl(
|\ell|^{m+1}
\|\partial_y^2\psi_\ell\|_{L_y^2}
\Bigr)^{1/2}
\|\omega_{k-\ell}\|_{L_y^2}
\nonumber\\
\lesssim&
\nu^{-1/2}
\mathcal{E}_{\neq}^{1/2}
\mathcal{D}_{\neq,5}^{1/2}
\mathcal{D}_{\neq,3}^{1/4}
\mathcal{D}_{\neq,1}^{1/4}.
\end{align*}

On $\Omega_{\mathrm{HL}}$, we use the frequency comparisons in
\eqref{frequency_comparability_HL}, together with
\eqref{psi_neq3} and the weighted elliptic estimates. This gives
\begin{align*}
\left|
\left.\mathcal{T}_{8}^{(2)}\right|_{\Omega_{\mathrm{HL}}}
\right|
\lesssim&
e^{2\delta_0\nu^{1/2}t}
\sum_{(k,\ell)\in\Omega_{\mathrm{HL}}}
\Bigl(
|k|^{m+\frac34}
\|\partial_y\omega_k\|_{L_y^2}
\Bigr)
\Bigl(
|k-\ell|^{m+\frac14}
\|y\omega_{k-\ell}\|_{L_y^2}
\Bigr)
\|\partial_y\psi_\ell\|_{L_y^\infty}
\nonumber\\
\lesssim&
\nu^{-5/8}
\mathcal{E}_{\neq}^{1/2}
\mathcal{D}_{\neq,2}^{1/2}
\mathcal{D}_{\neq,3}^{1/4}
\mathcal{D}_{\neq,4}^{1/4}.
\end{align*}
Consequently,
\begin{align*}
|\mathcal{T}_{8}^{(2)}|
\lesssim&
\nu^{-1/2}
\mathcal{E}_{\neq}^{1/2}
\mathcal{D}_{\neq,5}^{1/2}
\mathcal{D}_{\neq,3}^{1/4}
\mathcal{D}_{\neq,1}^{1/4}
+
\nu^{-5/8}
\mathcal{E}_{\neq}^{1/2}
\mathcal{D}_{\neq,2}^{1/2}
\mathcal{D}_{\neq,3}^{1/4}
\mathcal{D}_{\neq,4}^{1/4}.
\end{align*}

\smallskip
\noindent\textbf{Estimate of $\mathcal{T}_{8}^{(3)}$.}
We first use the decomposition
\eqref{N3_T8_frequency_decomposition}. On
$\Omega_{\mathrm{LH}}$, the relation $|k|\simeq|\ell|$ and the
Gagliardo--Nirenberg inequality yield
\begin{align*}
\left|
\left.\mathcal{T}_{8}^{(3)}\right|_{\Omega_{\mathrm{LH}}}
\right|
\lesssim&
e^{2\delta_0\nu^{1/2}t}
\sum_{(k,\ell)\in\Omega_{\mathrm{LH}}}
\Bigl(
|k|^{m+\frac12}
\|y\omega_k\|_{L_y^2}
\Bigr)
\nonumber\\
&\quad\times
\Bigl(
|\ell|^{m+\frac12}
\|\partial_y\psi_\ell\|_{L_y^2}
\Bigr)^{1/2}
\Bigl(
|\ell|^{m+\frac12}
\|\partial_y^2\psi_\ell\|_{L_y^2}
\Bigr)^{1/2}
\|\partial_y\omega_{k-\ell}\|_{L_y^2}
\nonumber\\
\lesssim&
\nu^{-5/12}
\mathcal{E}_{\neq}^{1/2}
\mathcal{D}_{\neq,3}^{3/4}
\mathcal{D}_{\neq,1}^{1/12}
\mathcal{D}_{\neq,4}^{1/6}.
\end{align*}

On $\Omega_{\mathrm{HL}}$, we use
$
|k|\lesssim|k-\ell|$,
$
|\ell|\lesssim|k-\ell|,
$
and apply \eqref{psi_neq3} together with the weighted convolution
estimates. We obtain
\begin{align*}
\left|
\left.\mathcal{T}_{8}^{(3)}\right|_{\Omega_{\mathrm{HL}}}
\right|
\lesssim&
e^{2\delta_0\nu^{1/2}t}
\sum_{(k,\ell)\in\Omega_{\mathrm{HL}}}
\Bigl(
|k|^{m+\frac14}
\|y\omega_k\|_{L_y^2}
\Bigr)
\Bigl(
|k-\ell|^m
\|\partial_y\omega_{k-\ell}\|_{L_y^2}
\Bigr)
\|\ell\partial_y\psi_\ell\|_{L_y^\infty}
\nonumber\\
\lesssim&
\nu^{-3/8}
\mathcal{E}_{\neq}^{1/2}
\mathcal{D}_{\neq,1}^{1/2}
\mathcal{D}_{\neq,3}^{1/4}
\mathcal{D}_{\neq,4}^{1/4}.
\end{align*}
Thus,
\begin{align*}
|\mathcal{T}_{8}^{(3)}|
\lesssim&
\nu^{-5/12}
\mathcal{E}_{\neq}^{1/2}
\mathcal{D}_{\neq,3}^{3/4}
\mathcal{D}_{\neq,1}^{1/12}
\mathcal{D}_{\neq,4}^{1/6}
+
\nu^{-3/8}
\mathcal{E}_{\neq}^{1/2}
\mathcal{D}_{\neq,1}^{1/2}
\mathcal{D}_{\neq,3}^{1/4}
\mathcal{D}_{\neq,4}^{1/4}.
\end{align*}

\smallskip
\noindent\textbf{Estimate of $\mathcal{T}_{8}^{(4)}$.}
For the last component, we use the decomposition
\eqref{N3_T8_frequency_decomposition}. By the Cauchy--Schwarz
inequality in $y$, we have
\begin{align*}
|\mathcal{T}_{8}^{(4)}|
\lesssim&
e^{2\delta_0\nu^{1/2}t}
\sum_{\substack{k\neq0\\ \ell\neq0,\ k-\ell\neq0}}
|k|^{2m+1}
\|y\omega_k\|_{L_y^2}
\|\partial_y^2\psi_\ell\|_{L_y^\infty}
\|\omega_{k-\ell}\|_{L_y^2}.
\end{align*}
Applying the interpolation estimate \eqref{w1}, the $L^\infty_y$
bounds for $\partial_y^2\psi_\ell$, and the weighted convolution
inequality on the two regions, we obtain
\begin{align}
\label{N3_T84_final}
|\mathcal{T}_{8}^{(4)}|
\lesssim
\nu^{-11/24}
\mathcal{E}_{\neq}^{1/2}
\mathcal{D}_{\neq,1}^{1/6}
\mathcal{D}_{\neq,4}^{1/3}
\mathcal{D}_{\neq,3}^{1/2}.
\end{align}

Combining \eqref{N3_T81_final}--\eqref{N3_T84_final}, we conclude that
\begin{align*}
|\mathcal{T}_{8}|
\lesssim&
\nu^{-1/2}
\mathcal{E}_{\neq}^{1/2}
\mathcal{D}_{\neq,4}^{1/2}
\mathcal{D}_{\neq,1}^{1/4}
\mathcal{D}_{\neq,3}^{1/4}
+
\nu^{-2/3}
\mathcal{E}_{\neq}^{1/2}
\mathcal{D}_{\neq,4}^{2/3}
\mathcal{D}_{\neq,1}^{1/3}
\nonumber\\
&+
\nu^{-1/2}
\mathcal{E}_{\neq}^{1/2}
\mathcal{D}_{\neq,5}^{1/2}
\mathcal{D}_{\neq,3}^{1/4}
\mathcal{D}_{\neq,1}^{1/4}
+
\nu^{-5/8}
\mathcal{E}_{\neq}^{1/2}
\mathcal{D}_{\neq,2}^{1/2}
\mathcal{D}_{\neq,3}^{1/4}
\mathcal{D}_{\neq,4}^{1/4}
\nonumber\\
&+
\nu^{-5/12}
\mathcal{E}_{\neq}^{1/2}
\mathcal{D}_{\neq,3}^{3/4}
\mathcal{D}_{\neq,1}^{1/12}
\mathcal{D}_{\neq,4}^{1/6}
+
\nu^{-3/8}
\mathcal{E}_{\neq}^{1/2}
\mathcal{D}_{\neq,1}^{1/2}
\mathcal{D}_{\neq,3}^{1/4}
\mathcal{D}_{\neq,4}^{1/4}
+
\nu^{-11/24}
\mathcal{E}_{\neq}^{1/2}
\mathcal{D}_{\neq,1}^{1/6}
\mathcal{D}_{\neq,4}^{1/3}
\mathcal{D}_{\neq,3}^{1/2}.
\end{align*}

Every product of dissipation factors on the right-hand sides of
 the estimates $ \mathcal{T}_{7}$--$\mathcal{T}_{10} $ has total degree one.
Moreover, since $0<\nu<1$,
\begin{align*}
\nu^{-1/2},\quad
\nu^{-5/8},\quad
\nu^{-11/24},\quad
\nu^{-5/12},\quad
\nu^{-3/8}
\le
\nu^{-2/3}.
\end{align*}
Therefore, the weighted arithmetic--geometric mean inequality gives
\begin{align*}
\left|
\mathcal N_{\neq,\neq}^{(3,1)}
\right|
&\lesssim
\nu^{-2/3}
\mathcal E_{\neq}^{1/2}
\mathcal D_{\neq}.
\end{align*}

It remains to  control the unweighted term. Following the convention introduced at the beginning of this section,
we decompose
\begin{align*}
\mathcal R_{\neq}
&=
\mathcal R_{0,\neq}
+
\mathcal R_{\neq,0}
+
\mathcal R_{\neq,\neq},
\end{align*}
where
\begin{align*}
\mathcal R_{0,\neq}
&=
\beta e^{2\delta_0\nu^{1/2}t}
\sum_{k\neq0}
|k|^{2m-1}
\operatorname{Re}
\Big\langle
ik\omega_k,
\partial_y\psi_0\,ik\omega_k
\Big\rangle ,
\\
\mathcal R_{\neq,0}
&=
-\beta e^{2\delta_0\nu^{1/2}t}
\sum_{k\neq0}
|k|^{2m-1}
\operatorname{Re}
\Big\langle
ik\omega_k,
ik\psi_k\,\partial_y\omega_0
\Big\rangle ,
\end{align*}
and
\begin{align*}
\mathcal R_{\neq,\neq}
=&
\beta e^{2\delta_0\nu^{1/2}t}
\sum_{\substack{k\neq0\\ \ell\neq0,\ k-\ell\neq0}}
|k|^{2m-1}
\operatorname{Re}
\Big\langle
ik\omega_k,
\partial_y\psi_\ell\,
i(k-\ell)\omega_{k-\ell}
\Big\rangle
\nonumber\\
&-
\beta e^{2\delta_0\nu^{1/2}t}
\sum_{\substack{k\neq0\\ \ell\neq0,\ k-\ell\neq0}}
|k|^{2m-1}
\operatorname{Re}
\Big\langle
ik\omega_k,
i\ell\psi_\ell\,
\partial_y\omega_{k-\ell}
\Big\rangle
\nonumber\\
=:&
\mathcal R_{\neq,\neq}^{(1)}
+
\mathcal R_{\neq,\neq}^{(2)}.
\end{align*}

The mean--fluctuation term $\mathcal R_{0,\neq}$ contains
$\partial_y\psi_0=-u_0^1$. Therefore, it is estimated by the same
mean-velocity argument used for $\mathcal T_5$. In particular,
\begin{align*}
|\mathcal R_{0,\neq}|
\lesssim&
2\delta_0
\left(
\mathcal D_{\neq,1}
+
\mathcal D_{\neq,4}
\right)
+
\nu^{-2/3}
\mathcal E^{1/2}
\mathcal D_{\neq}
\nonumber\\
&+
\nu^{-2/3-1/4}
\mathcal E^{1/2}
\mathcal D_{\neq}
\Bigg(
\int_0^s
\Big(
\sum_{k\neq0}
|k|^{2m}
\|\nabla_k\psi_k(\tau)\|_{L_y^2}^2
\Big)^{1/2}
\Big(
\sum_{k\neq0}
|k|^{2m}
\|\omega_k(\tau)\|_{L_y^2}^2
\Big)^{1/2}
\,\mathrm d\tau
\Bigg)^{1/4}.
\end{align*}

For the nonzero--zero term, the estimate $\|\partial_y\omega_0\|_{L_y^2}
\lesssim
\nu^{-1/6}\mathcal E_0^{1/2}$ and Lemma \ref{psi_neq} give
\begin{align*}
|\mathcal R_{\neq,0}|
&\lesssim
\nu^{-2/3}
\mathcal E^{1/2}
\mathcal D_{\neq,3}.
\end{align*}

For $\mathcal R_{\neq,\neq}^{(1)}$, we apply the common frequency
partition:
\begin{align*}
\mathcal R_{\neq,\neq}^{(1)}
&=
\left.\mathcal R_{\neq,\neq}^{(1)}
\right|_{\Omega_{\mathrm{LH}}}
+
\left.\mathcal R_{\neq,\neq}^{(1)}
\right|_{\Omega_{\mathrm{HL}}}.
\end{align*}
On $\Omega_{\mathrm{LH}}$, the comparison
$|k|\simeq|\ell|$, the Sobolev estimate for
$\partial_y\psi_\ell$, and the weighted discrete H\"older inequality
give
\begin{align}
\label{N3_R1_LH}
\left|
\left.\mathcal R_{\neq,\neq}^{(1)}
\right|_{\Omega_{\mathrm{LH}}}
\right|
&\lesssim
\nu^{-1/2}
\mathcal E_{\neq}^{1/2}
\mathcal D_{\neq,4}.
\end{align}
On $\Omega_{\mathrm{HL}}$, the estimates $|k|\lesssim|k-\ell|$, $|\ell|\lesssim|k-\ell|$ and \eqref{w1} imply
\begin{align}\label{N3_R1_HL}
\left|
\left.\mathcal R_{\neq,\neq}^{(1)}
\right|_{\Omega_{\mathrm{HL}}}
\right|
&\lesssim
\nu^{-2/3}
\mathcal E_{\neq}^{1/2}
\mathcal D_{\neq,1}^{1/3}
\mathcal D_{\neq,4}^{2/3}.
\end{align}
The term $\mathcal R_{\neq,\neq}^{(2)}$ is estimated in the same way
as $\mathcal T_9$, and hence
\begin{align}
\label{N3_R2_estimate}
|\mathcal R_{\neq,\neq}^{(2)}|
&\lesssim
\nu^{-1/2}
\mathcal E_{\neq}^{1/2}
\mathcal D_{\neq,4}^{1/2}
\mathcal D_{\neq,3}^{1/2}.
\end{align}
Combining \eqref{N3_R1_LH}, \eqref{N3_R1_HL}, and
\eqref{N3_R2_estimate}, we obtain
\begin{align*}
|\mathcal R_{\neq,\neq}|
&\lesssim
\nu^{-2/3}
\mathcal E_{\neq}^{1/2}
\mathcal D_{\neq}.
\end{align*}

Combining the estimates for the three interaction types gives
\begin{align*}
|\mathcal R_{\neq}|
\lesssim&
2\delta_0
\left(
\mathcal D_{\neq,1}
+
\mathcal D_{\neq,4}
\right)
+
\nu^{-2/3}
\mathcal E^{1/2}
\mathcal D_{\neq}
\nonumber\\
&+
\nu^{-2/3-1/4}
\mathcal E^{1/2}
\mathcal D_{\neq}
\Bigg(
\int_0^s
\Big(
\sum_{k\neq0}
|k|^{2m}
\|\nabla_k\psi_k(\tau)\|_{L_y^2}^2
\Big)^{1/2}
\Big(
\sum_{k\neq0}
|k|^{2m}
\|\omega_k(\tau)\|_{L_y^2}^2
\Big)^{1/2}
\,\mathrm d\tau
\Bigg)^{1/4}.
\end{align*}

The component $\mathcal N_{\neq}^{(3,2)}$ is treated by the same
integration-by-parts argument and satisfies the same bound. Combining
the preceding estimates with those for the interactions, we conclude that
\begin{align*}
|\mathcal N_{\neq}^{(3)}|
\lesssim&
\nu^{-2/3}
\mathcal E^{1/2}
\mathcal D_{\neq}
+
2\delta_0
\left(
\mathcal D_{\neq,1}
+
\mathcal D_{\neq,4}
\right)
\nonumber\\
&+
\nu^{-2/3-1/4}
\mathcal E^{1/2}
\mathcal D_{\neq}
\Bigg(
\int_0^s
\left(
\sum_{k\neq0}
|k|^{2m}
\|\nabla_k\psi_k(\tau)\|_{L_y^2}^2
\right)^{1/2}
\nonumber\\
&\hspace{6.6cm}\times
\left(
\sum_{k\neq0}
|k|^{2m}
\|\omega_k(\tau)\|_{L_y^2}^2
\right)^{1/2}
\,\mathrm d\tau
\Bigg)^{1/4}.
\end{align*}
This is the estimate asserted in \eqref{N3}.
 Consequently, we  complete the proof of Lemma \ref{le_N3}.
\end{proof}

\subsection{Proof of Lemma \ref{le_N4}}

\begin{proof}
By the definition of $\mathcal{N}_{\neq}^{(4)}$, we have
\begin{align*}
\mathcal{N}_{\neq}^{(4)}
=&
-2\gamma\nu^{-1/2}
e^{2\delta_0\nu^{1/2}t}
\sum_{k\neq0}|k|^{2m-\frac32}
\operatorname{Re}
\Bigg\langle
iky\omega_k,
iky
\sum_{\ell\in\mathbb Z}
\partial_y\psi_\ell\,i(k-\ell)\omega_{k-\ell}
\Bigg\rangle
\nonumber\\
&+
2\gamma\nu^{-1/2}
e^{2\delta_0\nu^{1/2}t}
\sum_{k\neq0}|k|^{2m-\frac32}
\operatorname{Re}
\Bigg\langle
iky\omega_k,
iky
\sum_{\ell\in\mathbb Z}
i\ell\psi_\ell\,\partial_y\omega_{k-\ell}
\Bigg\rangle .
\end{align*}
According to the convention introduced at the beginning of this
section, the first subscript refers to the Fourier mode of the velocity
factor, while the second subscript refers to the Fourier mode of the
vorticity factor. We therefore decompose
\begin{align*}
\mathcal{N}_{\neq}^{(4)}
=
\mathcal{N}_{0,\neq}^{(4)}
+
\mathcal{N}_{\neq,0}^{(4)}
+
\mathcal{N}_{\neq,\neq}^{(4)}.
\end{align*}

\smallskip
\noindent\textbf{Mean--fluctuation interaction $(\ell=0)$.}

When $\ell=0$, the velocity factor is the zero mode and the vorticity
factor is the nonzero mode. Thus,
\begin{align}
\mathcal{N}_{0,\neq}^{(4)}
=&
-2\gamma\nu^{-1/2}
e^{2\delta_0\nu^{1/2}t}
\sum_{k\neq0}|k|^{2m-\frac32}
\operatorname{Re}
\Big\langle
iky\omega_k,
iky\partial_y\psi_0\,ik\omega_k
\Big\rangle .
\label{N4_mean_fluctuation}
\end{align}
Since $\partial_y\psi_0$ is real-valued, the integrand in
\eqref{N4_mean_fluctuation} is purely imaginary. Hence
\begin{equation}\label{N4_mean_fluctuation_zero}
\mathcal{N}_{0,\neq}^{(4)}=0.
\end{equation}

\smallskip
\noindent\textbf{Fluctuation--mean interaction $(\ell=k)$.}

When $\ell=k$, the velocity factor is the nonzero mode and the vorticity
factor is the zero mode. In this case,
\begin{align*}
\mathcal{N}_{\neq,0}^{(4)}
=&
2\gamma\nu^{-1/2}
e^{2\delta_0\nu^{1/2}t}
\sum_{k\neq0}|k|^{2m-\frac32}
\operatorname{Re}
\Big\langle
iky\omega_k,
iky\,ik\psi_k\,\partial_y\omega_0
\Big\rangle .
\end{align*}
By the Cauchy--Schwarz inequality and
Lemma~\ref{mix}, we have
		\begin{align*}
		\begin{aligned}
		\mathcal{N}_{\neq,0}^{(4)}
		&\lesssim
		\nu^{-1/2}
		e^{2\delta_0\nu^{1/2}t}
		\sum_{k\neq 0}
		\Bigl(|k|^{m+\frac12}
		\|y\omega_k\|_{L^2}\Bigr)
		\Bigl(	|k|^{m+1}\|y\psi_k\|_{L^\infty}\Bigr)
		\|\partial_y \omega_0\|_{L^2}\\
		&\lesssim \nu^{-2/3}
		\mathcal{E}_0^{1/2}
		\mathcal{D}_{\neq,3}.
		\end{aligned}
		\end{align*}
	
Consequently,
\begin{align}\label{N4_fluctuation_mean_final}
\left|
\mathcal{N}_{\neq,0}^{(4)}
\right|
&\lesssim
\nu^{-2/3}
\mathcal{E}_0^{1/2}
\mathcal{D}_{\neq,3}
\lesssim
\nu^{-2/3}
\mathcal{E}^{1/2}
\mathcal{D}_{\neq,3}.
\end{align}

\smallskip
\noindent\textbf{Fluctuation--fluctuation interaction
$(\ell\neq0,\ k-\ell\neq0)$.}

For $\ell\neq0$ and $k-\ell\neq0$, we write
\begin{align*}
\mathcal{N}_{\neq,\neq}^{(4)}
=
\mathcal{T}_{11}+\mathcal{T}_{12},
\end{align*}
where
\begin{align*}
\mathcal{T}_{11}
=&
-2\gamma\nu^{-1/2}
e^{2\delta_0\nu^{1/2}t}
\sum_{\substack{k\neq0\\ \ell\neq0,\ k-\ell\neq0}}
|k|^{2m-\frac32}
\operatorname{Re}
\Big\langle
iky\omega_k,
iky
\partial_y\psi_\ell\,
i(k-\ell)\omega_{k-\ell}
\Big\rangle ,
\\
\mathcal{T}_{12}
=&
2\gamma\nu^{-1/2}
e^{2\delta_0\nu^{1/2}t}
\sum_{\substack{k\neq0\\ \ell\neq0,\ k-\ell\neq0}}
|k|^{2m-\frac32}
\operatorname{Re}
\Big\langle
iky\omega_k,
iky
i\ell\psi_\ell\,
\partial_y\omega_{k-\ell}
\Big\rangle .
\end{align*}

Using the Cauchy--Schwarz inequality in $y$, we first obtain
\begin{align*}
|\mathcal{T}_{11}|
\lesssim&
\nu^{-1/2}
e^{2\delta_0\nu^{1/2}t}
\sum_{\substack{k\neq0,\ \ell\neq0,\ k-\ell\neq0}}
|k|^{2m+\frac12}
\|y\omega_k\|_{L_y^2}
\|\partial_y\psi_\ell\|_{L_y^\infty}
|k-\ell|
\|y\omega_{k-\ell}\|_{L_y^2}.
\end{align*}

In the region $\Omega_{\mathrm{LH}}$, we have
$|k|\lesssim|\ell|$. Applying the weighted discrete convolution
inequality, the $L^\infty_y$ estimate for
$\partial_y\psi_\ell$, and the definitions of
$\mathcal{D}_{\neq,3}$ and $\mathcal{E}_{\neq}$, we obtain
\begin{align}
|\mathcal{T}_{11}|_{\Omega_{\mathrm{LH}}}
\lesssim
\nu^{-1/2}
\mathcal{E}_{\neq}^{1/2}
\mathcal{D}_{\neq,3}.
\label{N4_T11_LH}
\end{align}

In the region $\Omega_{\mathrm{HL}}$, the inequality $|k|\le|\ell|+|k-\ell|
\lesssim |k-\ell|$ allows us to transfer the high frequency weight to the
$k-\ell$-factor. Since
\begin{align*}
|k-\ell|\,\|y\omega_{k-\ell}\|_{L_y^2}
\le
\|y\nabla_{k-\ell}\omega_{k-\ell}\|_{L_y^2},
\end{align*}
we obtain, by the weighted convolution estimate and
\eqref{psi_neq3},
\begin{align}
|\mathcal{T}_{11}|_{\Omega_{\mathrm{HL}}}
\lesssim&
\nu^{-5/8}
\mathcal{E}_{\neq}^{1/2}
\mathcal{D}_{\neq,3}^{1/4}
\mathcal{D}_{\neq,4}^{1/4}
\mathcal{D}_{\neq,5}^{1/2}.
\label{N4_T11_HL}
\end{align}
Here we used the fact that
\begin{align*}
\sum_{\ell\neq0}
|\ell|\,\|\partial_y\psi_\ell\|_{L_y^\infty}
\lesssim
\nu^{-1/8}
e^{-\delta_0\nu^{1/2}t}
\mathcal{D}_{\neq,3}^{1/4}
\mathcal{D}_{\neq,4}^{1/4},
\end{align*}
which is precisely \eqref{psi_neq3}.

Combining \eqref{N4_T11_LH} and \eqref{N4_T11_HL}, and using
$\nu^{-1/2}\le\nu^{-2/3}$ and
$\nu^{-5/8}\le\nu^{-2/3}$ for $0<\nu<1$, we find
\begin{align}
|\mathcal{T}_{11}|
\lesssim
\nu^{-2/3}
\mathcal{E}_{\neq}^{1/2}
\left(
\mathcal{D}_{\neq,3}
+
\mathcal{D}_{\neq,3}^{1/4}
\mathcal{D}_{\neq,4}^{1/4}
\mathcal{D}_{\neq,5}^{1/2}
\right).
\label{N4_T11_final}
\end{align}

By the Cauchy--Schwarz inequality,
\begin{align*}
|\mathcal{T}_{12}|
\lesssim&
\nu^{-1/2}
e^{2\delta_0\nu^{1/2}t}
\sum_{\substack{k\neq0,\ \ell\neq0,\ k-\ell\neq0}}
|k|^{2m+\frac12}
\|y\omega_k\|_{L_y^2}
\|\psi_\ell\|_{L_y^\infty}
\|y\partial_y\omega_{k-\ell}\|_{L_y^2}.
\end{align*}

In the region $\Omega_{\mathrm{LH}}$, we have
$|k|\lesssim|\ell|$. Using the weighted $L^\infty_y$ estimate for
$\psi_\ell$ and the weighted discrete convolution inequality, we obtain
\begin{align}
|\mathcal{T}_{12}|_{\Omega_{\mathrm{LH}}}
\lesssim&
\nu^{-2/3}
\mathcal{E}_{\neq}^{1/2}
\mathcal{D}_{\neq,5}^{1/2}
\mathcal{D}_{\neq,3}^{1/4}
\mathcal{D}_{\neq,1}^{1/12}
\mathcal{D}_{\neq,4}^{1/6}.
\label{N4_T12_LH}
\end{align}
The factors $\mathcal{D}_{\neq,1}^{1/12}$ and
$\mathcal{D}_{\neq,4}^{1/6}$ arise from the interpolation estimate
\eqref{w1} and the corresponding distribution of the frequency weights.

In the region $\Omega_{\mathrm{HL}}$, we transfer the high frequency
weight to the $k-\ell$-factor and use
\begin{align*}
\|y\partial_y\omega_{k-\ell}\|_{L_y^2}
\le
\|y\nabla_{k-\ell}\omega_{k-\ell}\|_{L_y^2}.
\end{align*}
Together with \eqref{psi_neq1}, this gives
\begin{align}\label{N4_T12_HL}
|\mathcal{T}_{12}|_{\Omega_{\mathrm{HL}}}
\lesssim
\nu^{-1/2}
\mathcal{E}_{\neq}^{1/2}
\mathcal{D}_{\neq,5}^{1/2}
\mathcal{D}_{\neq,3}^{1/2}.
\end{align}

Combining \eqref{N4_T12_LH} and \eqref{N4_T12_HL}, we obtain
\begin{align}\label{N4_T12_final}
|\mathcal{T}_{12}|
\lesssim&
\nu^{-2/3}
\mathcal{E}_{\neq}^{1/2}
\mathcal{D}_{\neq,5}^{1/2}
\mathcal{D}_{\neq,3}^{1/4}
\mathcal{D}_{\neq,1}^{1/12}
\mathcal{D}_{\neq,4}^{1/6}
+
\nu^{-1/2}
\mathcal{E}_{\neq}^{1/2}
\mathcal{D}_{\neq,5}^{1/2}
\mathcal{D}_{\neq,3}^{1/2}.
\end{align}

Finally, combining
\eqref{N4_mean_fluctuation_zero},
\eqref{N4_fluctuation_mean_final},
\eqref{N4_T11_final}, and
\eqref{N4_T12_final}, and using
\begin{align*}
\mathcal{E}_0\le\mathcal{E},
\qquad
\mathcal{E}_{\neq}\le\mathcal{E},
\qquad
\nu^{-1/2}\le\nu^{-2/3},
\end{align*}
we conclude that
\begin{align*}
\left|\mathcal{N}_{\neq}^{(4)}\right|
\lesssim
\nu^{-2/3}\mathcal{E}^{1/2}
\Big(
&\mathcal{D}_{\neq,3}
+
\mathcal{D}_{\neq,5}^{1/2}
\mathcal{D}_{\neq,3}^{1/4}
\mathcal{D}_{\neq,1}^{1/12}
\mathcal{D}_{\neq,4}^{1/6}
+
\mathcal{D}_{\neq,5}^{1/2}
\mathcal{D}_{\neq,3}^{1/2}
+
\mathcal{D}_{\neq,3}^{1/4}
\mathcal{D}_{\neq,4}^{1/4}
\mathcal{D}_{\neq,5}^{1/2}
\Big).
\end{align*}
This proves Lemma \ref{le_N4}.
\end{proof}

\smallskip

\subsection{Proof of Lemma \ref{le_N5}}

\begin{proof}
By the definition of $\mathcal{N}_{\neq}^{(5)}$, we have
\begin{align*}
\mathcal{N}_{\neq}^{(5)}
=&
2\gamma\nu^{-1/2}
e^{2\delta_0\nu^{1/2}t}
\sum_{k\neq0}
|k|^{2m+\frac12}
\operatorname{Re}
\Bigg\langle
\psi_k,
\sum_{\ell\in\mathbb Z}
\partial_y\psi_\ell\,i(k-\ell)\omega_{k-\ell}
\Bigg\rangle
\nonumber\\
&-
2\gamma\nu^{-1/2}
e^{2\delta_0\nu^{1/2}t}
\sum_{k\neq0}
|k|^{2m+\frac12}
\operatorname{Re}
\Bigg\langle
\psi_k,
\sum_{\ell\in\mathbb Z}
i\ell\psi_\ell\,\partial_y\omega_{k-\ell}
\Bigg\rangle .
\end{align*}
According to the convention introduced at the beginning of this
section, we decompose
\begin{align*}
\mathcal{N}_{\neq}^{(5)}
&=
\mathcal{N}_{0,\neq}^{(5)}
+
\mathcal{N}_{\neq,0}^{(5)}
+
\mathcal{N}_{\neq,\neq}^{(5)},
\end{align*}
where
\begin{align*}
\mathcal{N}_{0,\neq}^{(5)}
&=
2\gamma\nu^{-1/2}
e^{2\delta_0\nu^{1/2}t}
\sum_{k\neq0}
|k|^{2m+\frac12}
\operatorname{Re}
\Big\langle
\psi_k,
\partial_y\psi_0\,ik\omega_k
\Big\rangle ,
\\
\mathcal{N}_{\neq,0}^{(5)}
&=
-2\gamma\nu^{-1/2}
e^{2\delta_0\nu^{1/2}t}
\sum_{k\neq0}
|k|^{2m+\frac12}
\operatorname{Re}
\Big\langle
\psi_k,
ik\psi_k\,\partial_y\omega_0
\Big\rangle ,
\\
\mathcal{N}_{\neq,\neq}^{(5)}
&=
\mathcal{T}_{13}+\mathcal{T}_{14},
\end{align*}
with
\begin{align*}
\mathcal{T}_{13}
&=
2\gamma\nu^{-1/2}
e^{2\delta_0\nu^{1/2}t}
\sum_{\substack{k\neq0\\ \ell\neq0,\ k-\ell\neq0}}
|k|^{2m+\frac12}
\operatorname{Re}
\Big\langle
\psi_k,
\partial_y\psi_\ell\,i(k-\ell)\omega_{k-\ell}
\Big\rangle ,
\\
\mathcal{T}_{14}
&=
-2\gamma\nu^{-1/2}
e^{2\delta_0\nu^{1/2}t}
\sum_{\substack{k\neq0\\ \ell\neq0,\ k-\ell\neq0}}
|k|^{2m+\frac12}
\operatorname{Re}
\Big\langle
\psi_k,
i\ell\psi_\ell\,\partial_y\omega_{k-\ell}
\Big\rangle .
\end{align*}

\smallskip
\noindent\textbf{Mean--fluctuation interaction.}

For $\mathcal{N}_{0,\neq}^{(5)}$, we use the elliptic relation
\begin{align*}
\omega_k=\Delta_k\psi_k
=
\partial_y^2\psi_k-k^2\psi_k.
\end{align*}
Therefore,
\begin{align}
\mathcal{N}_{0,\neq}^{(5)}
=&
2\gamma\nu^{-1/2}
e^{2\delta_0\nu^{1/2}t}
\sum_{k\neq0}
|k|^{2m+\frac12}
\operatorname{Re}
\int_{\mathbb R}
\overline{\psi_k}\,
\partial_y\psi_0\,ik
(\partial_y^2-k^2)\psi_k\,\mathrm dy
\nonumber\\
=&
-2\gamma\nu^{-1/2}
e^{2\delta_0\nu^{1/2}t}
\sum_{k\neq0}
|k|^{2m+\frac12}
\operatorname{Re}
\int_{\mathbb R}
\overline{\partial_y\psi_k}\,
\partial_y\psi_0\,ik\partial_y\psi_k\,\mathrm dy
\nonumber\\
&-
2\gamma\nu^{-1/2}
e^{2\delta_0\nu^{1/2}t}
\sum_{k\neq0}
|k|^{2m+\frac12}
\operatorname{Re}
\int_{\mathbb R}
\overline{\psi_k}\,
\partial_y^2\psi_0\,ik\partial_y\psi_k\,\mathrm dy .
\label{N5_mean_fluctuation_IBP}
\end{align}
The first integral on the right-hand side of
\eqref{N5_mean_fluctuation_IBP} is purely imaginary and hence vanishes.
It follows that
\begin{align*}
\left|
\mathcal{N}_{0,\neq}^{(5)}
\right|
&\lesssim
\nu^{-1/2}
\|\partial_y^2\psi_0\|_{L_y^2}
e^{2\delta_0\nu^{1/2}t}
\sum_{k\neq0}
|k|^{2m+\frac32}
\|\psi_k\|_{L_y^\infty}
\|\partial_y\psi_k\|_{L_y^2}.
\end{align*}
Since $\partial_y^2\psi_0=\omega_0$, the zero-mode estimate gives
\begin{align*}
\|\partial_y^2\psi_0\|_{L_y^2}
=
\|\omega_0\|_{L_y^2}
\lesssim
\mathcal{E}_0^{1/2}.
\end{align*}
Using the weighted Cauchy--Schwarz inequality and
\eqref{psi_neq1}, we obtain
\begin{align}\label{N5_mean_fluctuation_final}
\left|
\mathcal{N}_{0,\neq}^{(5)}
\right|
&\lesssim
\nu^{-1/2}\mathcal{E}_0^{1/2}
\Big(
e^{2\delta_0\nu^{1/2}t}
\sum_{k\neq0}
|k|^{2m+2}
\|\psi_k\|_{L_y^\infty}^2
\Big)^{1/2}
\Big(
e^{2\delta_0\nu^{1/2}t}
\sum_{k\neq0}
|k|^{2m+1}
\|\partial_y\psi_k\|_{L_y^2}^2
\Big)^{1/2}
\nonumber\\
&\lesssim
\nu^{-1/2}
\mathcal{E}_0^{1/2}
\mathcal{D}_{\neq,3}
\nonumber\\
&\lesssim
\nu^{-1/2}
\mathcal{E}^{1/2}
\mathcal{D}_{\neq,3}.
\end{align}

\smallskip
\noindent\textbf{Fluctuation--mean interaction.}

Since $\partial_y\omega_0$ is real-valued, we have
\begin{align*}
\operatorname{Re}
\big\langle
\psi_k,ik\psi_k\,\partial_y\omega_0
\big\rangle
=
\operatorname{Re}
\int_{\mathbb R}
ik|\psi_k|^2\partial_y\omega_0\,\mathrm dy
=0.
\end{align*}
Consequently,
\begin{align}
\label{N5_fluctuation_mean_zero}
\mathcal{N}_{\neq,0}^{(5)}=0.
\end{align}

\smallskip
\noindent\textbf{Fluctuation--fluctuation interaction
$\mathcal{T}_{13}$.}

We use the frequency partition
\eqref{frequency_partition} and write
\begin{align*}
\mathcal{T}_{13}
&=
\left.\mathcal{T}_{13}\right|_{\Omega_{\mathrm{LH}}}
+
\left.\mathcal{T}_{13}\right|_{\Omega_{\mathrm{HL}}}.
\end{align*}

On $\Omega_{\mathrm{LH}}$, we have $|k|\simeq|\ell|$. Applying the one-dimensional
Gagliardo--Nirenberg inequality, the weighted Cauchy--Schwarz
inequality, and the weighted discrete convolution estimate, we obtain
\begin{align}\label{N5_T13_LH}
\left|
\left.\mathcal{T}_{13}\right|_{\Omega_{\mathrm{LH}}}
\right|
\lesssim&
\nu^{-1/2}
e^{2\delta_0\nu^{1/2}t}
\sum_{(k,\ell)\in\Omega_{\mathrm{LH}}}
\Bigl(
|k|^{m+1}
\|\psi_k\|_{L_y^\infty}
\Bigr)
\Bigl(
|\ell|^{m+\frac12}
\|\partial_y\psi_\ell\|_{L_y^2}
\Bigr)
\|\omega_{k-\ell}\|_{L_y^2}
\nonumber\\
\lesssim&
\nu^{-1/2}
\mathcal{E}_{\neq}^{1/2}
\mathcal{D}_{\neq,3}.
\end{align}

On $\Omega_{\mathrm{HL}}$, the comparison
\eqref{frequency_comparability_HL} allows us to transfer the
high-frequency weights to the $k-\ell$-factor. We obtain
\begin{align*}
\left|
\left.\mathcal{T}_{13}\right|_{\Omega_{\mathrm{HL}}}
\right|
\lesssim&
\nu^{-1/2}
e^{2\delta_0\nu^{1/2}t}
\sum_{(k,\ell)\in\Omega_{\mathrm{HL}}}
\Bigl(
|k|^{m+\frac32}
\|\psi_k\|_{L_y^2}
\Bigr)
\Bigl(
|k-\ell|^m
\|\omega_{k-\ell}\|_{L_y^2}
\Bigr)
\|\partial_y\psi_\ell\|_{L_y^\infty}
\nonumber\\
&+
\nu^{-1/2}
e^{2\delta_0\nu^{1/2}t}
\sum_{(k,\ell)\in\Omega_{\mathrm{HL}}}
\Bigl(
|k|^{m+\frac12}
\|\psi_k\|_{L_y^2}
\Bigr)
\Bigl(
|k-\ell|^m
\|\omega_{k-\ell}\|_{L_y^2}
\Bigr)
\|\ell\partial_y\psi_\ell\|_{L_y^\infty}.
\end{align*}
Using \eqref{psi_neq1}, \eqref{psi_neq3}, and the weighted
interpolation estimates in Lemma \ref{w_neq}, we obtain
\begin{align}
\left|
\left.\mathcal{T}_{13}\right|_{\Omega_{\mathrm{HL}}}
\right|
\lesssim&
\nu^{-5/8}
\mathcal{E}_{\neq}^{1/2}
\mathcal{D}_{\neq,3}^{3/4}
\mathcal{D}_{\neq,4}^{1/4}
+
\nu^{-1/2}
\mathcal{E}_{\neq}^{1/2}
\mathcal{D}_{\neq,3}.
\label{N5_T13_HL}
\end{align}
Combining \eqref{N5_T13_LH} and \eqref{N5_T13_HL}, and using
$\mathcal{E}_{\neq}\le\mathcal{E}$, we find
\begin{align}\label{N5_T13_final}
\left|\mathcal{T}_{13}\right|
\lesssim&
\nu^{-5/8}
\mathcal{E}^{1/2}
\mathcal{D}_{\neq,3}^{1/2}
\mathcal{D}_{\neq,3}^{1/4}
\mathcal{D}_{\neq,4}^{1/4}
+
\nu^{-1/2}
\mathcal{E}^{1/2}
\mathcal{D}_{\neq,3}.
\end{align}

\smallskip
\noindent\textbf{Fluctuation--fluctuation interaction
$\mathcal{T}_{14}$.}

We integrate by parts in $y$ in the definition of
$\mathcal{T}_{14}$. Up to the sign determined by the convention for an $L^2$ inner product, which is irrelevant for the estimates below, this
gives
\begin{align*}
\mathcal{T}_{14}
=&
2\gamma\nu^{-1/2}
e^{2\delta_0\nu^{1/2}t}
\sum_{\substack{k\neq0\\ \ell\neq0,\ k-\ell\neq0}}
|k|^{2m+\frac12}
\operatorname{Re}
\Big\langle
\partial_y\psi_k,
i\ell\psi_\ell\,\omega_{k-\ell}
\Big\rangle
\nonumber\\
&+
2\gamma\nu^{-1/2}
e^{2\delta_0\nu^{1/2}t}
\sum_{\substack{k\neq0\\ \ell\neq0,\ k-\ell\neq0}}
|k|^{2m+\frac12}
\operatorname{Re}
\Big\langle
\psi_k,
i\ell\partial_y\psi_\ell\,\omega_{k-\ell}
\Big\rangle
\nonumber\\
=:&
\mathcal{S}_1+\mathcal{S}_2.
\end{align*}

The term $\mathcal{S}_1$ is estimated by the same frequency
decomposition as $\mathcal{T}_{13}$. In particular, using
\eqref{frequency_partition}, \eqref{psi_neq1},
\eqref{psi_neq3}, and the weighted discrete convolution inequality, we
obtain
\begin{align}
\left|\mathcal{S}_1\right|
\lesssim&
\nu^{-5/8}
\mathcal{E}_{\neq}^{1/2}
\mathcal{D}_{\neq,3}^{3/4}
\mathcal{D}_{\neq,4}^{1/4}
+
\nu^{-1/2}
\mathcal{E}_{\neq}^{1/2}
\mathcal{D}_{\neq,3}.
\label{N5_S1_estimate}
\end{align}

For the second term $\mathcal{S}_2$, we use the identity $\ell=(\ell-k)+k$ to  write
\begin{align*}
\mathcal{S}_2
&=
\mathcal{S}_{2,\ell-k}
+
\mathcal{S}_{2,k},
\end{align*}
where
\begin{align*}
\mathcal{S}_{2,\ell-k}
&=
2\gamma\nu^{-1/2}
e^{2\delta_0\nu^{1/2}t}
\sum_{\substack{k\neq0\\ \ell\neq0,\ k-\ell\neq0}}
|k|^{2m+\frac12}
\operatorname{Re}
\Big\langle
\psi_k,
i(\ell-k)\partial_y\psi_\ell\,\omega_{k-\ell}
\Big\rangle ,
\\
\mathcal{S}_{2,k}
&=
2\gamma\nu^{-1/2}
e^{2\delta_0\nu^{1/2}t}
\sum_{\substack{k\neq0\\ \ell\neq0,\ k-\ell\neq0}}
|k|^{2m+\frac12}
\operatorname{Re}
\Big\langle
\psi_k,
ik\partial_y\psi_\ell\,\omega_{k-\ell}
\Big\rangle .
\end{align*}
The term $\mathcal{S}_{2,\ell-k}$ has the same structure as
$\mathcal{T}_{13}$ after relabeling the convolution variables. Hence,
\begin{align}
\left|\mathcal{S}_{2,\ell-k}\right|
\lesssim&
\nu^{-5/8}
\mathcal{E}_{\neq}^{1/2}
\mathcal{D}_{\neq,3}^{3/4}
\mathcal{D}_{\neq,4}^{1/4}
+
\nu^{-1/2}
\mathcal{E}_{\neq}^{1/2}
\mathcal{D}_{\neq,3}.
\label{N5_S2_lk_estimate}
\end{align}

For $\mathcal{S}_{2,k}$, we again apply the partition
\eqref{frequency_partition}:
\begin{align*}
\mathcal{S}_{2,k}
&=
\left.\mathcal{S}_{2,k}\right|_{\Omega_{\mathrm{LH}}}
+
\left.\mathcal{S}_{2,k}\right|_{\Omega_{\mathrm{HL}}}.
\end{align*}
On $\Omega_{\mathrm{LH}}$, the relation
$|k|\simeq|\ell|$ allows us to use the same weighted Sobolev and
convolution estimates as in \eqref{N5_T13_LH}. On
$\Omega_{\mathrm{HL}}$, we use
\begin{align*}
|k|\lesssim|k-\ell|,
\qquad
|\ell|\lesssim|k-\ell|,
\end{align*}
from \eqref{frequency_comparability_HL}. Therefore,
\begin{align}
\left|\mathcal{S}_{2,k}\right|
\lesssim&
\nu^{-5/8}
\mathcal{E}_{\neq}^{1/2}
\mathcal{D}_{\neq,3}^{3/4}
\mathcal{D}_{\neq,4}^{1/4}
+
\nu^{-1/2}
\mathcal{E}_{\neq}^{1/2}
\mathcal{D}_{\neq,3}.
\label{N5_S2_k_estimate}
\end{align}
Combining \eqref{N5_S1_estimate},
\eqref{N5_S2_lk_estimate}, and
\eqref{N5_S2_k_estimate}, we obtain
\begin{align}
\left|\mathcal{T}_{14}\right|
\lesssim&
\nu^{-5/8}
\mathcal{E}_{\neq}^{1/2}
\mathcal{D}_{\neq,3}^{3/4}
\mathcal{D}_{\neq,4}^{1/4}
+
\nu^{-1/2}
\mathcal{E}_{\neq}^{1/2}
\mathcal{D}_{\neq,3}.
\label{N5_T14_final}
\end{align}

Finally, combining
\eqref{N5_fluctuation_mean_zero},
\eqref{N5_mean_fluctuation_final},
\eqref{N5_T13_final}, and
\eqref{N5_T14_final}, and using
\begin{align*}
\mathcal{E}_0\le\mathcal{E},
\qquad
\mathcal{E}_{\neq}\le\mathcal{E},
\end{align*}
we conclude that
\begin{align*}
\left|\mathcal{N}_{\neq}^{(5)}\right|
\lesssim&
\nu^{-5/8}
\mathcal{E}_{\neq}^{1/2}
\mathcal{D}_{\neq,3}^{1/2}
\mathcal{D}_{\neq,3}^{1/4}
\mathcal{D}_{\neq,4}^{1/4}
+
\nu^{-1/2}
\mathcal{E}^{1/2}
\mathcal{D}_{\neq,3}.
\end{align*}
Combining the above estimates proves \ref{le_N5}.
	\end{proof}

\bigskip

\vskip .2in
\section*{Acknowledgement}
\noindent{C. Zhai was partially supported by the National Natural Science Foundation
of China  under grant 12671269 and 12201035. X. Zhai was partially supported by  the Guangdong Provincial Natural Science Foundation under grant 2024A1515030115. }

 \vskip .2in
\noindent{\bf Data Availability Statement} Data sharing is not applicable to this article as no
data sets were generated or analysed during the current study.

\vskip .2in

\noindent{\bf Conflict of Interest} The authors declare that they have no conflict of interest. The
authors also declare that this manuscript has not been previously published, and
will not be submitted elsewhere before your decision.

\vskip .2in
\noindent{\bf Declaration of generative AI and AI-assisted technologies in the manuscript preparation process}
 During the preparation of this work, the authors used Gemini 3.1-Pro to improve the English language. After using this
tool, the authors reviewed and edited the content as needed and take full responsibility for the final
version of the manuscript.

\end{document}